\documentclass[11pt]{amsart}
\usepackage{mathpazo}
\usepackage{amssymb}
\usepackage{comment}
\usepackage{csquotes}
\usepackage{xcolor}
\usepackage{hyperref}
\hypersetup{
colorlinks=true,
linkcolor=blue,
filecolor=blue,
urlcolor=blue,
citecolor=blue,
pdfpagemode=Fullscreen
}
\usepackage[nodisplayskipstretch]{setspace}
\usepackage[margin=.7in]{geometry}

\newtheorem{theorem}{Theorem}[section]
\newtheorem{lemma}[theorem]{Lemma}
\theoremstyle{definition}

\newtheorem{definition}[theorem]{Definition}
\newtheorem{question}[theorem]{Question}
\newtheorem*{Acknowledgement}{\textnormal{\textbf{Acknowledgement}}}

\newtheorem{corollary}[theorem]{Corollary}
\newtheorem{proposition}[theorem]{Proposition}
\newtheorem{remark}[theorem]{Remark}

\numberwithin{equation}{section}
\newcommand{\beqa}{\begin{eqnarray*}}
	\newcommand{\eeqa}{\end{eqnarray*}}
\newcommand{\beqn}{\begin{eqnarray}}
	\newcommand{\eeqn}{\end{eqnarray}}

\newcommand{\R}{I\!\!R}

\newcommand{\I}{\mathcal{I}}
\newcommand{\N}{I\!\!N}

\newcommand{\e}{\varepsilon}
\newcommand{\del}{\delta}
\newcommand{\Del}{\Delta}

\newcounter{cnt1}
\newcounter{cnt2}
\newcounter{cnt3}
\newcommand{\blr}{\begin{list}{$($\roman{cnt1}$)$}
		{\usecounter{cnt1} \setlength{\topsep}{0pt}
			\setlength{\itemsep}{0pt}}}
	\newcommand{\bla}{\begin{list}{$($\alph{cnt2}$)$}
			{\usecounter{cnt2} \setlength{\topsep}{0pt}
				\setlength{\itemsep}{0pt}}}
		\newcommand{\bln}{\begin{list}{$($\arabic{cnt3}$)$}
				{\usecounter{cnt3} \setlength{\topsep}{0pt}
					\setlength{\itemsep}{0pt}}}
			\newcommand{\el}{\end{list}}
		\newtheorem{thm}{Theorem}

		\newtheorem{Def}[thm]{Definition}
		
		\newtheorem{rem}[thm]{Remark}
		\newcommand{\Rem}{\begin{rem} \rm}
			\newcommand{\bdfn}{\begin{Def} \rm}
				\newcommand{\edfn}{\end{Def}}

			\title{Semi denting points and related notions in Banach spaces}
			\author[ S. Basu, P. P. Behera, D. Gothwal, S. Seal ]
			{Sudeshna Basu$^{1}$, Priyanka Priyadarshini Behera$^{2}$, Deepak Gothwal $^{2}$, Susmita Seal$ ^{3}$     }
            
			\address{{$^{1}$} Sudeshna Basu, 
		Department of Mathematics and Statistics, 
				Loyola University, 
				Baltimore, MD 21210, USA }
			\email{sbasu1@loyola.edu, sudeshnamelody@gmail.com}

                \address {{$^{2}$} Priyanka Priyadarshini Behera and Deepak Gothwal, 
				Department of Mathematics,
				Indian Institute of Technology Kharagpur, 
				Kharagpur, India-721302}
			\email{bpriyadarshini18@gmail.com\\deepakgothwal190496@gmail.com}
			\address {{$^{3}$} Susmita Seal, 
				 School of Mathematical Sciences,
				National Institute of Science Education and Research, 
                Jatni, Khurda 752050,
				Odisha, India}
			\email{susmitaseal1996@gmail.com}

			\subjclass{46B20, 46B28}
			\keywords{Semi denting, Semi PC, Semi SCS, M-ideals, Strict ideals, Almost isometric ideal, Projective tensor product}
			\date{}
\begin{document}
\begin{abstract} 
%In this work, we investigate the stability of semi-denting points and their weaker variants under $\ell_p$-sums of Banach spaces for $1\leq p \leq \infty$. We also show that these notions can be lifted from ideals to their superspaces under suitable strong geometric assumptions. Stability results under projective tensor products have also been discussed. 
In this work, we study semi denting points and related notions in Banach spaces. We observe that $X$ has the Radon-Nikodým Property if and only if every closed bounded convex set has a semi denting point.
We also study the stability properties of semi denting, semi PC, and semi SCS points, as well as their $w^*$-analogues in Banach spaces, with respect to  $l_p$-sums ( $1\leq p  \leq \infty$), ideals, and projective tensor products.

\end{abstract}

\maketitle
%==================================================
\section{Introduction}
Let $X$ be a Banach space and $X^*$ its dual space. We denote by $B_X$ and $S_X$ the closed unit ball and the unit sphere of $X$, respectively. For $x \in X$ and $r >0$, $B[x,r]$ and $B(x,r)$ denote the closed ball and the open ball with center $x$ and radius $r>0$, respectively.
%Let $X$ denote a Banach space and $X^*$ denote the dual of $X$. $B_X$ and $S_X$ denote the unit ball and the unit sphere of $X$, respectively. %Suppose $X$ has two topologies $\tau_1$ and $\tau_2$. Let $x \in X$ has the property $P$ if  which also contains $x$. We say that $x$ has \emph{semi-P} if every open set in $\tau_1$ containing $x$ contains an open set of $\tau_2$. The condition of $\tau_2$ containing $x$ is relaxed in this ``semi''-notion.% 
%Let $X$ be a Banach space and $C$ be a bounded subset of $X$. 
For a bounded subset $C$ of $X$, we say that a point $x_0\in C$ is a {\it denting point} (resp. {\it Point of Continuity} (PC), {\it Small Combination of Slice point} (SCS point)) of $C$ if for every $\varepsilon > 0$, there exists a slice (resp. weakly open set, convex combination of slices) $S$ of $C$ such that $x_0 \in S \subset B(x_0,\varepsilon)$. Analogously, one can define their $w^*$-counterparts in dual spaces by considering $w^*$-slices, $w^*$-open sets and convex combinations of $w^*$-slices, respectively.

The {\it Radon–Nikodým Property} (RNP), a fundamental notion in Banach space theory, has a geometric characterization, namely, $X$ has the RNP if and only if every closed bounded convex subset of $X$ has a denting point. Following this geometric characterization, two weaker properties--{\it Convex Point of Continuity Property} (CPCP) and {\it Strong Regularity} (SR)--were introduced by replacing denting points by  PC and SCS points, respectively. RNP implies CPCP, which in turn implies SR, where none of the reverse implications holds. These properties are deeply connected to the extremal structure of closed bounded convex sets, the differentiability of convex functions, and the long-standing open problem: {\it does the Krein-Milman Property (KMP) imply RNP?} For more details, see \cite{ B1, B3, DU, GGMS, GMS} and references therein.

%The following definitions provide weaker variants of the above concepts.
%The following notions weaken these concepts.
\begin{definition} \cite{CL, BS4}
%Given a Banach space $X$ and a bounded subset $C$ of $X$,
%Let $X$ be a Banach space and $C$ be a bounded subset of $X$. 
We say that a point $x_0\in C$ is a {\it semi denting point} (resp. {\it semi Point of Continuity} (semi PC), {\it semi Small Combination of Slice point} (semi SCS point)) of $C$ if for every $\varepsilon > 0$, there exists a slice (resp. weakly open set, convex combination of slices) $S$ of $C$ such that $S \subset B(x_0,\varepsilon)$. Analogously, one can define their $w^*$-counterparts in the dual spaces by considering $w^*$-slices, $w^*$-open sets and convex combinations of $w^*$-slices, respectively. 
\end{definition}
It is well known that $X$ has RNP (resp. CPCP, SR) if and only if every closed bounded convex set has arbitrarily small slices (resp. weakly open sets, small combination of slices). See \cite{DU} and \cite{GGMS} for more details. 
\newpage

It is also well known that
\begin{theorem}\cite{DU, GGMS}
  \begin{enumerate}
     \item $X$ has the RNP if and only if every closed bounded convex set is the closed convex hull of its denting points.
\item $X$ has CPCP if and only if every closed bounded convex set is the weak closure of its PC.
\item $X$ has SR if and only if every closed bounded convex set is the norm closure of its SCS points.
\end{enumerate}
\end{theorem}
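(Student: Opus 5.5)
The plan is to prove each of the three equivalences by one common scheme. In each case, one direction uses the geometric characterization quoted just before the theorem: $X$ has RNP (resp. CPCP, SR) if and only if every closed bounded convex set has arbitrarily small slices (resp. relatively weakly open subsets, convex combinations of slices). The other direction is a localization argument: produce many good points in every slice (resp. every relatively weakly open subset, every norm-open piece) of a set.

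\emph{The ``if'' directions.} These are easy. Suppose every closed bounded convex $C$ is the closed convex hull of its denting points. Then $C$ has a denting point, and $C$ itself has slices of arbitrarily small diameter, so $X$ has RNP. The same works in the other two cases. If $C$ is the weak closure of its PC points, then $C$ has a PC point. By definition, for each $\varepsilon>0$ there is a relatively weakly open $V\ni x_0$ with $V\subset B(x_0,\varepsilon)$, so $\operatorname{diam}V\le 2\varepsilon$. If $C$ is the norm closure of its SCS points, we obtain small convex combinations of slices in the same way. In each case we conclude CPCP, resp. SR.

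\emph{The ``only if'' direction, RNP case.} Let $C$ be closed bounded convex, and put $D=\overline{\mathrm{co}}(\mathrm{dent}\,C)$. If $D\neq C$, pick $x\in C\setminus D$ and separate by Hahn--Banach: there is $f\in X^*$ with $f(x)>\sup_D f$. Take a slice $S_0=S(C,f,\alpha)$ of $C$ disjoint from $D$. The key tool is the standard lemma (Bourgain): if every closed bounded convex subset of $C$ has small slices, then every slice of $C$ contains a denting point of $C$. We prove it by iteration. Inside $\overline{S_0}$, choose nested slices $S_n$ of the convex sets $\overline{\mathrm{co}}(S_{n-1})$ with diameters tending to $0$. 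We arrange each $S_n$ to be a slice of $C$ itself by combining functionals, $f_n=f+\sum_k\delta_k g_k$ with small $\delta_k$. The norm limit point $x_0=\bigcap \overline{S_n}$ is then a denting point of $C$ lying in $\overline{S_0}\subset C\setminus D$, a contradiction. Equivalently, one can cite the Phelps/Bourgain theorem that in RNP spaces every slice of a closed bounded convex set contains a strongly exposed point.

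\emph{The CPCP and SR cases.} For CPCP, given a nonempty relatively weakly open $W\subset C$, we must find a PC point of $C$ inside $W$. We run the analogous nested construction with relatively weakly open sets $W\supset W_1\supset\cdots$ of $C$ of diameter $<1/n$. Each step is possible because CPCP gives small relatively weakly open subsets of $\overline{W_{n-1}}^{w}\cap C$. The Bourgain lemma that relatively weakly open subsets of a CPCP set contain weakly open subsets of small diameter ensures these are weakly open in $C$. The norm-Cauchy limit is a PC point of $C$ in $W$, so PC points are weakly dense. For SR, we use Bourgain's lemma that every slice contains a convex combination of slices, and run the same Baire-type nesting inside an arbitrary norm ball $B(x,\varepsilon)\cap C$. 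This produces an SCS point of $C$ arbitrarily close to $x$.

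The main obstacle is the localization step common to all three cases: a small slice (or weakly open set, or combination of slices) of a \emph{subset} must be realized as one of $C$ itself, and the nested intersection must be shown to be a genuine denting, PC, or SCS point of $C$. For RNP and CPCP I will handle this by the perturbation-of-functionals and Bourgain's lemma arguments above. For SR, where convex combinations of slices are not open, I expect the most care is needed, and I would rely directly on the characterization in \cite{GGMS}.
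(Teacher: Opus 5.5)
The paper gives no argument for this theorem; it is quoted from \cite{DU, GGMS}. So the only question is whether your sketch proves it. Your ``if'' directions are correct.

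The gap is in the ``only if'' direction for RNP. The step ``we arrange each $S_n$ to be a slice of $C$ itself by combining functionals'' fails. Suppose $S_{n-1}$ is an $f$-slice of depth $\alpha_{n-1}$, $g$ cuts the small slice of $\overline{S_{n-1}}$, and $x\in C$ lies in the $\gamma$-slice of $C$ determined by $f+\delta g$. Then you only get $f(x)>\sup_C f-\delta\,\mathrm{osc}_C(g)-\gamma$ and $g(x)>\sup_{\overline{S_{n-1}}}g-(\alpha_{n-1}+\gamma)/\delta$. Staying inside $S_{n-1}$ forces $\delta\,\mathrm{osc}_C(g)<\alpha_{n-1}$, and then the bound on $g$ is vacuous.

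This is not a technicality: an arbitrarily small slice of a slice need not contain any slice of $C$. Take $C=\mathrm{co}\{(-1,0),(1,0),(0,1)\}\subset\mathbb{R}^2$, $S_0=\{y>1/2\}$, and the slice of $\overline{S_0}$ cut out by $-x$ near the corner $(-1/2,1/2)$. It has arbitrarily small diameter and contains no vertex of $C$, while every slice of a polytope contains a vertex. So the small slices must be \emph{chosen}. Proving that they can be is exactly Bourgain's lemma: for every $\varepsilon>0$, the functionals $g$ for which $C$ has a $g$-slice of diameter $<\varepsilon$ are norm dense in $X^*$. With that lemma your nesting works, taking $f_n$ a small perturbation of $f_{n-1}$ (the Baire argument for strongly exposing functionals). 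But the lemma is the whole difficulty, so this part of your proposal reduces to the same citation the paper makes.

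For CPCP your outline is sound, and the localization is more elementary than you suggest. A basic neighbourhood $W=\{x\in C:|f_i(x-x_0)|<\delta,\ i\le n\}$ is convex. Apply CPCP to the closed convex set $K=\{x\in C:|f_i(x-x_0)|\le\delta/2\}$. Then intersect the resulting small relatively weakly open subset of $K$ with $\{x\in C:|f_i(x-x_0)|<\delta/2\}$, which is norm dense in $K$ and relatively weakly open in $C$. No Bourgain lemma is needed. This construction also gives $\overline{W_n}\subset W_{n-1}$, which you need so that the norm limit lies in every $W_n$.

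For SR, the lemma you quote (``every slice contains a convex combination of slices'') is trivial. Bourgain's lemma says that every nonempty relatively weakly open subset of $C$ contains a convex combination of slices of $C$. Your nesting inside a norm ball $B(x,\varepsilon)\cap C$ cannot be started. That set is not relatively weakly open, so Bourgain's lemma gives nothing there, and finding a convex combination of slices inside it is essentially the density you want to prove. A workable route is:
\begin{enumerate}
\item Use Bourgain's lemma, localized as in the CPCP case, to show that every slice of $C$ contains convex combinations of slices of $C$ of arbitrarily small diameter.
\item Refine each slice separately, keeping $\overline{S_{ij}}\subset S_i$, so that the limit point lies in every level and is an SCS point.
\item Conclude $C=\overline{\mathrm{co}}\,\mathrm{SCS}(C)$ by separation, and note that this equals the norm closure because SCS points form a convex set.
\end{enumerate}
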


As an immediate consequence, we have the following result.

\begin{theorem}
\begin{enumerate}
\item[] 
\item $X$ has the RNP if and only if every closed bounded convex subset of $X$ has a semi denting point if and only if every closed bounded convex set is the closed convex hull of its semi denting points. 
\item $X$ has  CPCP if and only if every closed bounded convex subset of $X$ has a semi PC,  if and only if every closed bounded convex set is the weak closure of its semi PCs.
\item $X$ has  SR if and only if every closed bounded convex subset of $X$ has a semi SCS point,  if and only if every closed bounded convex set is the norm closure of its semi SCS points.
\end{enumerate}
\end{theorem}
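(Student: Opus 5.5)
For each of the three items we establish a cycle of implications: (a) the property $\Rightarrow$ (c) the closure statement for semi points $\Rightarrow$ (b) existence of a semi point in every closed bounded convex set $\Rightarrow$ (a). All three items are handled the same way, replacing slices by weakly open sets (item 2) or convex combinations of slices (item 3). We use two facts. First, every denting point (resp. PC, SCS point) is trivially a semi denting point (resp. semi PC, semi SCS point), since the definition only drops the requirement $x_0\in S$. Second, the characterization recalled above: $X$ has RNP (resp. CPCP, SR) iff every closed bounded convex set has arbitrarily small slices (resp. relatively weakly open subsets, convex combinations of slices).

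For (a)$\Rightarrow$(c), we apply the preceding theorem. If $X$ has RNP and $C$ is closed bounded convex, then $C=\overline{\mathrm{co}}(\mathrm{dent}\,C)$. Since $\mathrm{dent}\,C\subseteq \mathrm{semi\text{-}dent}\,C\subseteq C$ and $C$ is closed convex, we get
\[
C=\overline{\mathrm{co}}(\mathrm{dent}\,C)\subseteq \overline{\mathrm{co}}(\mathrm{semi\text{-}dent}\,C)\subseteq C .
\]
The analogous sandwich with the weak closure of the PCs, resp. the norm closure of the SCS points, gives items 2 and 3. The implication (c)$\Rightarrow$(b) is immediate, since $C\neq\emptyset$ forces the set of semi points to be nonempty (the closure of the empty set is empty).

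For (b)$\Rightarrow$(a), let $C$ be a nonempty closed bounded convex set and let $x_0$ be a semi denting point of $C$. For $\varepsilon>0$ there is a slice $S$ of $C$ with $S\subset B(x_0,\varepsilon)$, so $\mathrm{diam}\,S\le 2\varepsilon$. Hence $C$ has slices of arbitrarily small diameter. As this holds for every closed bounded convex set, the recalled characterization gives RNP. In the same way, semi PC gives nonempty relatively weakly open subsets of small diameter, hence CPCP, and semi SCS gives convex combinations of slices of small diameter, hence SR.

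The only point needing care is to match the definitions in this last step: the characterizing ``small sets'' must be relatively weakly open subsets of $C$ (resp. convex combinations of slices of $C$), and they must be nonempty. Slices are nonempty by definition, and convex combinations of nonempty slices are nonempty. We will read the definition of semi PC as requiring a nonempty relatively weakly open subset of $C$, since otherwise the empty set would make every point a semi PC. With that reading, the argument for items 2 and 3 goes through as in item 1.
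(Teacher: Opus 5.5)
Your proposal is correct and follows the paper's route. The paper states this theorem as an immediate consequence of the two recalled characterizations: arbitrarily small slices (resp. weakly open sets, convex combinations of slices), and the representation of every closed bounded convex set via its denting points (resp. PCs, SCS points). Your cycle (a)$\Rightarrow$(c)$\Rightarrow$(b)$\Rightarrow$(a) spells that out, including the correct reading that the weakly open sets in the definition of semi PC must be nonempty.
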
 
The above theorem strengthens our motivation to explore how far weaker local geometric structures can still detect global geometric properties of Banach spaces.

The study of these “semi” notions was initiated by Chen and Lin \cite{CL} in 1995  and has subsequently developed into an active area of research (see \cite{GJM}, \cite{MS}). Another significant feature of these weakened local geometric properties is their ability to precisely capture the geometry of the unit ball in the context of ball separation properties, where separation is achieved via closed balls rather than hyperplanes. Notably, this includes the classical Mazur Intersection Property (MIP) and Property (II) introduced by Chen and Lin in  1998 (\cite{CL1}).
In 2006, Giles  proved that $X$ has MIP if and only if all points of $S_{X^*}$ are semi $w^*$-denting points of $B_{X^*}$ (\cite{G}).
Later in 2024, Basu and Seal proved that $X$ has property (II) if and only if all points of $S_{X^*}$ are semi $w^*$-PC of $B_{X^*}$ \cite{BS4}. In 2021,  Bandyopadhyay, Ganesh, and  Gothwal introduced  the uniform notion of semi $w^*$-denting points and characterized the uniform version of MIP in terms of these points (\cite{BGG}).

%(One may refer to the mentioned reference for the definitions and notions involved). 

%Given a Banach space $X$ and a bounded subset $C$ of $X$,
 
These connections between the "semi'' notions and the geometric phenomenon point towards the fact that this "semi'' behavior is very crucial in understanding the geometry of Banach spaces. This motivates us to look further in this direction. 
%We essentially look into the stability of these notions. The stability of denting point, PC, and SCS of the closed unit ball  point with respect to  $l_p$ sum ( $1\leq p \leq \infty$) are well established in the literature (see \cite{W, S, BR}).  However, analogous stability results for semi denting points, semi PC and semi SCS points were open questions. 
%The aim of this work is to address these questions; our main contributions are summarized below.

The paper is organized as follows. Section \ref{prel} recalls the necessary notation and preliminary concepts. In Section \ref{stability}, we give the stability results for semi denting, semi PC and semi SCS points of the closed unit ball with respect to $l_p$-sum ( $1\leqslant p \leqslant \infty$). In Section \ref{ideal sec}, we study the stability of these "semi" notions and their weak star versions in the context of  Banach space ideals, specifically focusing on M-ideals, ai-ideals and strict-ideals. Finally, Section \ref{tensor} discusses their stability under projective tensor products.

\section{Notation and preliminaries} \label{prel}

For a nonempty subset $K$ of $X$, we consider the slice determined by $x^* \in X^*$ and $\delta >0$,
$$
S(K, x^*, \delta) := \{x \in K : x^*(x) > \mbox{sup}~ x^*(K) - \delta \}.
$$

For $K=B_X$, we assume that $x^* \in S_{X^*}$ for the sake of convenience.
We further denote $\sum_{i=1}^n \lambda_i S(K, x_i^*, \delta_i)$ to be a convex combination of slices of $K$, where $\lambda_i \geq 0, ~ \sum_{i=1}^n \lambda_i =1,~ 1 \leq i \leq n.$

\subsection{Ideals of Banach spaces} 
%We reiterate the following concepts and findings relevant to our discussion:
A closed subspace $Y \subset X$ is called an
ideal if there exists a norm one projection $P: X^* \rightarrow X^*$ such that $\text{Ker}(P) = Y^\perp$. Note that every Banach space $X$ is an ideal in its bidual $X^{**}$ , where the norm one
projection on $X^{***}$ is given by $x^{***}\rightarrow x^{***}|_X$. 

\begin{definition}
    A closed subspace $Y$ in a Banach space $X$ is called 
    \begin{enumerate}
        \item $M$-ideal if there exists a bounded linear projection $P: X^* \rightarrow X^*$ such that $\text{Ker}(P) = Y^\perp$ and $\| x^* \| = \| Px^* \| + \| x^* - Px^* \|$ for all $x^* \in X^*.$
        \item strict ideal if there exists a norm one projection $P: X^\ast \rightarrow X^\ast$ with $\text{ker}(P) = Y^\bot$ and $B_{P(X^\ast)}$ is $w^*$-dense in $B_{X^*}$.
        \item almost isometric ideal ($ai$-ideal) in $X$ if for every $\varepsilon >0$ and every finite dimensional subspace $F \subset X$ there exists $T: F \rightarrow Y$ such that $Tu = u $ for all $u \in F \bigcap Y$ and $\frac{1}{1+\varepsilon} \Vert u \Vert \leqslant \Vert Tu \Vert \leqslant (1+\varepsilon) \Vert u \Vert$ for all $u\in F$.
    \end{enumerate}
    \end{definition}

 It is well-known that if $Y$ is an $M$-ideal in $X$, then elements of $Y^\ast$ possess a unique norm-preserving extension to $X^\ast$ (\cite[Chapter I]{HWW}), leading to the identification $X^\ast = Y^\ast \oplus_1 Y^\perp$. 
% \begin{definition}
 %    An ideal $Y \subseteq X$ is classified as a strict ideal if, given a projection $P: X^\ast \rightarrow X^\ast$ with $\|P\| = 1$ and $\text{ker}(P) = Y^\bot$, it follows that $B_{P(X^\ast)}$ is $w^*$-dense in $B_{X^*}$ or in other words $B_{P(X^*)}$ is a norming set for $X.$ 
 %\end{definition}
% A quintessential illustration of a strict ideal is a Banach space $X$ under its canonical embedding in $X^{\ast\ast}.$
Recall that $f : Y^* \rightarrow X^*$ is called a Hahn Banach extension operator if $(fy^*)|_Y = y^*$ and $\Vert fy^* \Vert = \Vert y^* \Vert ,$ $\forall y^* \in Y^*$ and $y^* \in Y^*.$ 
%Following \cite{ALN}, we recall the following notion.
%\begin{definition}
%    A subspace $Y$ of $X$ is an almost isometric ideal ($ai$-ideal) in $X$ if for every $\varepsilon >0$ and every finite dimensional subspace $F \subset X$ there exists $T: F \rightarrow Y$ such that $Tu = u ,$ for all $u \in F \bigcap Y$ and $\frac{1}{1+\varepsilon} \Vert u \Vert \leqslant \Vert Tu \Vert \leqslant (1+\varepsilon) \Vert u \Vert,$ for all $u\in F$. 
%\end{definition}

%===================================================
\subsection{Projective tensor products}
Let $X$ and $Y$ be two Banach spaces. Then the projective norm $\|.\|_{\pi}$ on tensor product $X\otimes Y$ is given by
$$\|u\|_{\pi}= \inf \{ \sum_{i=1}^{n} \|x_i\|  \|y_i\| : u=\sum_{i=1}^{n} x_i \otimes y_i\}.  $$
%The projective norm $\|.\|_{\pi}$ on tensor product $X\otimes Y$ is given by  $\|u\|_{\pi}= \inf \{ \sum_{i=1}^{n} \|x_i\|  \|y_i\| : u=\sum_{i=1}^{n} x_i \otimes y_i\}. $
 The pair $(X\otimes Y, \|.\|_{\pi})$ is denoted by $X \otimes_{\pi} Y$ and its completion is known as the projective tensor product, which is denoted by $X \hat{\otimes}_{\pi} Y.$
  It follows that $B_{X \widehat{\otimes}_{\pi} Y} = \overline{\text{conv}} (B_{X} \otimes B_{Y}) = \overline{\text{conv}} (S_{X} \otimes S_{Y})$.

The projective tensor product $X \hat{\otimes}_{\pi} Y$ can also be formulated in terms of spaces of operators. It is straightforward to verify that the mapping $A\mapsto L_A$ (where $ \langle y, L_A(x) \rangle  =   A(x,y)$) is an isometric isomorphism between the spaces $(X\widehat{\otimes}_\pi Y)^*$ and $ L(X,Y^*)$. Thus, we have the identification $(X\widehat{\otimes}_\pi Y)^* = L(X,Y^*)$ (see \cite{R} for more details).
%  The space $ L(X,Y^*)$ is linearly isometric to the topological dual of $X\widehat{\otimes}_\pi Y$.
%Let $X$ and $Y$ be Banach spaces. Then we recall that the algebraic tensor product of $X$ and $Y$, denoted by $X \otimes_{\pi} Y$, endowed with the projective norm $\Vert \cdot \Vert_\pi$ given by $$\Vert u \Vert_{\pi} := \inf \bigg \{ \sum_{i=1}^n \Vert x_i \Vert \Vert \hat{x}_i \Vert : u=\sum_{i=1}^n x_i \otimes \hat{x}_i \bigg \}. 
%The pair $(X \otimes Y, \Vert \cdot \Vert_{\pi})$ is sometimes denoted by $X \otimes_{\pi} Y$ and its completion is called the projective tensor product, denoted by $X \widehat{\otimes}_{\pi} Y$. \\ For subsets $Y_1 \subset X$ and $Y_2 \subset Y$, we have $Y_1 \otimes Y_2 = \{ y_1 \otimes y_2 : y_1 \in Y_1, y_2\in Y_2 \}.$ We also know that $B_{X \widehat{\otimes}_{\pi} Y} = \overline{\text{conv}} (B_{X} \otimes B_{Y}) = \overline{\text{conv}} (S_{X} \otimes S_{Y})$ and $(X \widehat{\otimes}_{\pi} Y)^* = \mathcal{L}(X, Y^*) = \mathcal{B}(X \times Y)$ (see \cite{R} for more details).

\section{ $l_p$-Sums of Banach Spaces}\label{stability}

\subsection{Semi denting points}
%We first consider the case $1<p<\infty.$  Subsequently prove for $p=1$ and $p=\infty.$
%The proof of the following lemma follows from the arguments in \cite[Theorem 2.29 (c)]{L1}.

\begin{lemma}\label{tech1}
  Let $1 <p <\infty$ and $q$ be such that $1/p + 1/q =1$.
  %and $0<\varepsilon<\frac{8\|x\|}{3}$.  
  Let $z=(x, y) \in S_{X \oplus_p Y}$ with $x \neq 0, y\neq 0,$  and $0<\varepsilon<\frac{8\|x\|}{3}$ be such that 
  %$x, y\neq 0,$ 
  $
S(B_X, x^*, \delta) \subseteq B(\frac{x}{\Vert x \Vert}, \frac{\varepsilon}{4}),
$
and
$
S(B_Y, y^*, \delta) \subseteq B (\frac{y}{\Vert y \Vert}, \frac{\varepsilon}{4})
$, for some $x^*\in S_{X^*},$ $y^*\in S_{Y^*}$ and $\delta >0.$ Then $$S(B_{X \oplus_p Y}, z^*, \bigg (\frac{3\varepsilon}{8}\bigg )^p\delta)\bigcap (\|x\| S_X \times \|y\| S_Y) \subset B[z,\frac{\varepsilon}{4}],$$ where $z^*=(\|x\|^{\frac{p}{q}}x^*,\|y\|^{\frac{p}{q}}y^*)$.
\end{lemma}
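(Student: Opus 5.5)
The plan is to reduce everything to the two slice hypotheses, using a coordinatewise ``budget'' inequality coming from the fact that $z^*$ norms $z$. Put $a=\|x\|$, $b=\|y\|$, so $a^p+b^p=1$, and put $\eta=(3\varepsilon/8)^p\delta$. First I check that $\|z^*\|_q=(a^{p}+b^{p})^{1/q}=1$. Hence $\sup z^*(B_{X\oplus_p Y})=1$, and the slice consists of the $w=(u,v)$ with $a^{p/q}x^*(u)+b^{p/q}y^*(v)>1-\eta$. Now take $w=(u,v)$ in the slice with $\|u\|=a$ and $\|v\|=b$. Write $x^*(u)=as$ and $y^*(v)=bt$ with $s,t\le 1$. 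Since $p/q+1=p$, the slice condition reads $a^ps+b^pt>1-\eta$, that is,
\[
a^p(1-s)+b^p(1-t)<\eta .
\]
Both summands are nonnegative, so $1-s<\eta/a^p$ and $1-t<\eta/b^p$.

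\emph{The $X$-coordinate.} This is where the hypothesis $\varepsilon<8\|x\|/3$ is used. It gives $\eta/a^p=(3\varepsilon/(8a))^p\delta<\delta$, so $x^*(u/a)>1-\delta$. Thus $u/a\in S(B_X,x^*,\delta)\subseteq B(x/a,\varepsilon/4)$, and so $\|u-x\|<a\varepsilon/4$. The same argument works in the $Y$-coordinate whenever $b\ge 3\varepsilon/8$. In that case $\|v-y\|<b\varepsilon/4$, and then
\[
\|w-z\|^p<(\varepsilon/4)^p(a^p+b^p)=(\varepsilon/4)^p,
\]
which is the claim.

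\emph{The case $b<3\varepsilon/8$.} This is the main obstacle, since no hypothesis of the form $\varepsilon<8\|y\|/3$ is available. The $X$-estimate $\|u-x\|^p<a^p(\varepsilon/4)^p$ is still valid. So it suffices to show $\|v-y\|\le b\varepsilon/4$, or at least $\|v-y\|^p\le b^p(\varepsilon/4)^p$ plus whatever slack the $X$-coordinate leaves.

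My first attempt is to exploit the smallness of the slice $S(B_Y,y^*,\delta)$ more quantitatively, rather than only membership in it. Step (a): show that the hypothesis $S(B_Y,y^*,\delta)\subseteq B(y/b,\varepsilon/4)$ forces $y^*(y/b)\ge 1-\delta'$ for suitable $\delta'$, and also forces $\delta$ itself to be small. A small slice cannot have large depth. If $w_0$ lies in the slice, then the segment from $w_0$ towards $-w_0$ remains in the slice for a length proportional to $\delta$, which gives $\delta\lesssim\varepsilon/2$. Step (b): apply a convexity rescaling. For $\lambda\in(0,1]$, the point $(1-\lambda)y/b+\lambda v/b$ lies in $B_Y$ and satisfies $y^*>1-(1-\lambda)\delta'-\lambda\eta/b^p$. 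Choose $\lambda$ of order $b^p\delta/\eta=(8b/(3\varepsilon))^p$ so that this point lies in the slice. Then $\lambda\|v/b-y/b\|<\varepsilon/2$, which gives
\[
\|v-y\|\lesssim \frac{b\varepsilon}{2\lambda}.
\]
Step (c): combine this bound with the trivial bound $\|v-y\|\le 2b$, and compare the result with the slack $(\varepsilon/4)^p(1-a^p)=(\varepsilon/4)^pb^p$ that remains in the $X$-coordinate.

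The delicate point is whether these constants actually close with exactly the factor $(3/8)^p$ in $\eta$. I expect the constants $3/8$ and $8/3$ were tuned for this step, so this is where the computation must be done carefully. If step (b) does not close, I will instead use the fact that $\|v\|=b$ is small together with the budget inequality: a large $1-t$ forces $b^p<\eta/(1-t)$. I will then split according to whether $1-t$ is below or above a threshold depending on $\varepsilon$.
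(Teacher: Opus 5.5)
\textbf{The case $b<3\varepsilon/8$ is a genuine gap, and it cannot be closed because the lemma is false there.} Your argument for the $X$-coordinate is correct, and so is your proof of the whole statement when $b=\|y\|\ge 3\varepsilon/8$. It is the paper's proof run forwards. The paper argues that if $\|x_1-x\|>\frac{\varepsilon}{4}\|x_1\|$ then $x^*(x_1)\le(1-\delta)\|x\|$, so $z^*(w)\le 1-\|x\|^p\delta\le 1-(3\varepsilon/8)^p\delta$. That is your budget inequality in contrapositive form.

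Here is a counterexample satisfying all the stated hypotheses:
\begin{itemize}
\item Take $X=Y=\mathbb{R}$, $p=q=2$, $x^*=y^*=1$, $\delta=\frac12$, $\varepsilon=2$, and $z=(x,y)=(\frac{2\sqrt2}{3},\frac13)\in S_{X\oplus_2 Y}$.
\item Both slice hypotheses hold, since $S(B_{\mathbb{R}},1,\frac12)=(\frac12,1]\subseteq B(1,\frac12)$. Also $\varepsilon=2<\frac{16\sqrt2}{9}=\frac{8\|x\|}{3}$.
\item Here $z^*=(\frac{2\sqrt2}{3},\frac13)$ and $(\frac{3\varepsilon}{8})^p\delta=\frac{9}{32}$.
\item The point $w=(\frac{2\sqrt2}{3},-\frac13)$ lies in $\|x\|S_X\times\|y\|S_Y$. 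It satisfies $z^*(w)=\frac79>\frac{23}{32}=1-\frac{9}{32}$, so $w$ is in the slice.
\item Yet $\|w-z\|_2=\frac23>\frac12=\frac{\varepsilon}{4}$.
\end{itemize}

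This is why your steps (a)--(c) and your fallback cannot succeed. The only information the slice gives about $v$ is $b^p(1-t)<\eta$. When $2b^p<\eta$ this is no restriction at all; in the example $v=-y$ and $t=-1$. The smallness of $S(B_Y,y^*,\delta)$ says nothing about a point $v/b$ lying far outside that slice. So $\|v-y\|$ can equal $2b$, which exceeds $\varepsilon/4$ once $b>\varepsilon/8$. The constants $3/8$ and $8/3$ are tuned only for the $X$-coordinate, not for this step as you guessed.

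The paper's proof has the same hole. Its ``without loss of generality $\|x_1-x\|>\frac{\varepsilon}{4}\|x_1\|$'' is not symmetric. In the other case it only obtains $z^*(w)\le 1-\|y\|^p\delta$, and no hypothesis compares $\|y\|$ with $3\varepsilon/8$.

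The correct hypothesis is $0<\varepsilon<\frac83\min\{\|x\|,\|y\|\}$. Under it, the first half of your proposal, applied to both coordinates, is a complete proof. The application in Theorem~\ref{thmlp} only needs small $\varepsilon$ for fixed nonzero $x,y$, so additionally requiring $\varepsilon<8\|y\|/3$ there costs nothing.
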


\begin{proof}
    Let $w=(x_1,y_1) \in B_{X \oplus_p Y}$ be such that $\|x_1\|=\|x\|$ and $\|y_1\|=\|y\|$.
    
    Suppose that $\|z-w\|_p > \frac{\varepsilon}{4}$.

%Case 1: $\|x_1\|=\|x\|$ and $\|y_1\|=\|y\|$.

Let $a_1=\|x_1-x\|$, $a_2=\|y_1-y\|$, $b_1=\|x_1\|=\|x\|$ and $b_2=\|y_1\|=\|y\|$.
We then have, $$a_1=\|x_1-x\| \leq \|x_1\|+\|x\|=2b_1 \quad \text{and} \quad a_2=\|y_1-y\| \leq \|y_1\|+\|y\|=2b_2.$$
If $\|x_1-x\| \leq \frac{\e}{4}\|x_1\|$ and $\|y_1-y\| \leq \frac{\e}{4} \|y_1\|$, then $\|z-w\| \leq \frac{\e}{4}$, which is a contradiction. 
Hence, $\|x_1-x\| >\frac{\e}{4}\|x_1\|$ or $\|y_1-y\| >\frac{\e}{4}\|y_1\|$. 

Without loss of generality, let $\|x_1-x\| >\frac{\varepsilon}{4}\|x_1\|$, i.e, $\frac{a_1}{b_1} >\frac{\e}{4}.$ \\
We claim that
$x^*(x_1) \leq (1-\delta)b_1$ and $y^*(y_1) \leq b_2$. 
Indeed, if $x^*(x_1) >(1-\delta)b_1$, then $x^*(\frac{x_1}{\|x_1\|}) >1-\delta$, which implies $\|\frac{x_1}{\|x_1\|}-\frac{x}{\|x\|}\| <\frac{\e}{4} <\frac{a_1}{b_1}$. This results in $\|x_1-x\| <\|x_1-x\|$, which leads to a contradiction. Also, $\|y_1\|=b_2$. So, $y^*(y_1) \leq b_2$. Hence, our claim follows. Thus

 \beqa 
z^*(w) & = &\|x\|^{\frac{p}{q}}x^*(x_1) + \|y\|^{\frac{p}{q}}y^*(y_1)
\\ & \leq & b_1^{\frac{p}{q}} (1-\delta)b_1 + b_2^{\frac{p}{q}}b_2
\\ & = & b_1^p   (1-\delta) + b_2^p 
\\ & = & 1- b_1^p \delta
%\\ & = & 1-\alpha_0\delta 
\\ & \leq & 1- \bigg (\frac{3\e}{8} \bigg )^p \delta. 
\eeqa 
So, $z^*(w) > 1-(\frac{3\e}{8})^p \delta$ implies that $\|w-z\|_p\leqslant \frac{\varepsilon}{4}$.
\end{proof}

\begin{theorem} \label{thmlp} 
For $1 <p <\infty$, $(x, y) \in S_{X \oplus_p Y}$ is a semi denting point of $B_{X \oplus_p Y}$ if and only if  $\frac{x}{\Vert x \Vert}$ is a semi denting point of $B_X$ whenever $x \neq 0$ and $\frac{y}{\Vert y \Vert}$ is a semi denting point of $B_Y$ whenever $y \neq 0$.
%all the following conditions are satisfied. \begin{enumerate}     \item $\frac{x}{\Vert x \Vert}$ is a semi denting point of $B_X$ whenever $x \neq 0$.     \item $\frac{y}{\Vert y \Vert}$ is a semi denting point of $B_Y$ whenever $y \neq 0$. \end{enumerate}
%$\frac{x}{\Vert x \Vert}$ is a semi denting point of $B_X$ whenever $x \neq 0$ and  $\frac{y}{\Vert y \Vert}$ is a semi denting point of $B_Y$ whenever $y \neq 0$.
\end{theorem}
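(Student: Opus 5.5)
Both directions are handled separately. The sufficiency direction uses Lemma \ref{tech1} together with a uniform convexity argument in the $\ell_p$-norm on the pair of norms. The necessity direction restricts a small slice of the sum to one coordinate.

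\emph{Necessity.} Suppose $z=(x,y)$ is a semi denting point of $B_{X\oplus_p Y}$ with $x\neq 0$, and fix $\varepsilon>0$. Choose a slice $S(B_{X\oplus_p Y},z^*,\delta)\subset B(z,\eta)$ with $z^*=(f,g)$ of norm one, so $\|f\|^q+\|g\|^q=1$. For $\eta$ small, the slice consists of points $(u,v)$ with $\|u-x\|<\eta$ and $\|v-y\|<\eta$, hence $\|u\|\approx\|x\|>0$. I expect $f\neq 0$. Indeed, if $f=0$, then every point $(u,v)$ with $v$ in a near-norming position for $g$ and $\|u\|^p+\|v\|^p\le 1$ lies in the slice; taking $u=0$ would contradict $\|u-x\|<\eta<\|x\|$.

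Next consider the $X$-slice $S(B_X,f/\|f\|,\delta')$ with $\delta'$ small relative to $\delta$. Pick $v_0$ with $\|v_0\|=\|y\|$ and $g(v_0)$ close to $\|g\|\,\|y\|$. For $u'$ in this $X$-slice, the point $(\|x\|u',v_0)$ should lie in the original slice. This requires checking that $\sup z^*=1$ is nearly attained by points of the form $(\|x\|u',v_0)$. That holds because the slice contains points near $z$, which forces $\|f\|\approx\|x\|^{p/q}$ and $\|g\|\approx\|y\|^{p/q}$, by the equality case of Hölder. Consequently $\|x\|u'$ is within $\eta$ of $x$, so $u'$ is within $\eta/\|x\|$ of $x/\|x\|$. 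This gives a small slice of $B_X$ inside $B(x/\|x\|,\varepsilon)$. The case $y\neq 0$ is symmetric.

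\emph{Sufficiency.} If $y=0$, then $\|x\|=1$. Take a small slice $S(B_X,x^*,\delta)$ near $x$ and use $z^*=(x^*,0)$. A point $(u,v)$ in this slice has $\|u\|>1-\delta$, hence $\|v\|^p\le 1-(1-\delta)^p$ is small, and $u$ is close to $x$. The case $x=0$ is symmetric.

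If both $x,y\neq 0$, pick slices as in Lemma \ref{tech1} with a common $\delta$ by shrinking. Lemma \ref{tech1} only controls points whose coordinate norms equal exactly $(\|x\|,\|y\|)$. So I would take the slice $S(B_{X\oplus_p Y},z^*,\delta'')$ with $\delta''\le(3\varepsilon/8)^p\delta$ much smaller still, and prove two things:
\begin{enumerate}
\item A point $w=(x_1,y_1)$ in this slice has $(\|x_1\|,\|y_1\|)$ close to $(\|x\|,\|y\|)$. This follows from $z^*(w)\le\|x\|^{p/q}\|x_1\|+\|y\|^{p/q}\|y_1\|$ and strict convexity, in fact uniform convexity, of $\ell_p^2$: near-equality in Hölder forces $(\|x_1\|,\|y_1\|)$ near the dual point.
\item Rescaling $w$ to $w'=(\|x\|x_1/\|x_1\|,\|y\|y_1/\|y_1\|)$ moves it by a small amount and keeps it in $S(B,z^*,(3\varepsilon/8)^p\delta)$. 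Lemma \ref{tech1} then gives $\|w'-z\|\le\varepsilon/4$, so $\|w-z\|<\varepsilon$.
\end{enumerate}

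The main obstacle is the quantitative step (1) and the bookkeeping that places $w'$ in the lemma's slice. I would use the modulus of convexity of $\ell_p^2$ to pick $\delta''$ explicitly, with separate care in the case where one of $\|x_1\|,\|y_1\|$ could be near $0$; this case is excluded by (1) since $\|x\|,\|y\|>0$.
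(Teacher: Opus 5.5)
Your sufficiency direction is the same as the paper's: the modulus of convexity of $\ell_p^2$ applied to $(\|x_1\|,\|y_1\|)$, rescaling onto $\|x\|S_X\times\|y\|S_Y$, then Lemma \ref{tech1}. Your argument that $f\neq 0$ is also the paper's.

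\textbf{The gap.} The central step of your necessity direction fails: the claim that $(\|x\|u',v_0)$ lies in $S(B_{X\oplus_p Y},z^*,\delta)$ is false in general. For membership you need $\|x\|\,\|f\|+\|y\|\,\|g\|$ to exceed $1-\delta$, up to the $\delta'$-losses. The only link between $z^*$ and $z$ is that the slice sits inside $B(z,\eta)$. That yields the H\"older approximations $\|f\|\approx\|x\|^{p/q}$ and $\|g\|\approx\|y\|^{p/q}$, but only with errors governed by $\eta$. Since $\delta$ is produced after $\eta$, it can be far smaller than the resulting deficit.

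\textbf{A counterexample to that step.} Take $X=Y=\mathbb{R}$, $p=2$, $z=(1/\sqrt2,1/\sqrt2)$, $z^*=(\cos\phi,\sin\phi)$ with $\phi=\pi/4+\eta/2$, and $0<\delta<1-\cos(\eta/2)$. Every $w$ in the slice satisfies $\|w-z^*\|<\sqrt{2\delta}<2\sin(\eta/4)<\eta/2$. Also $\|z^*-z\|=2\sin(\eta/4)<\eta/2$, so the slice is a legitimate witness inside $B(z,\eta)$. Yet for every $u'\in[-1,1]$ and $|v_0|=1/\sqrt2$ you get $z^*(u'/\sqrt2,v_0)\le\cos(\eta/2)<1-\delta$. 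So your test points never enter the slice, and you learn nothing about $u'$.

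\textbf{The repair.} Adapt the scaling to $z^*$ rather than to $z$, as the paper does.
\begin{enumerate}
\item Pick $(\hat x,\hat y)\in S(B_{X\oplus_p Y},z^*,\delta/4)$. Then $\|\hat x\|\,\|f\|+\|\hat y\|\,\|g\|\ge z^*(\hat x,\hat y)>1-\delta/4$.
\item Take $u'\in S(B_X,f/\|f\|,\delta/2)$ and $v_0\in B_Y$ with $g(v_0)\ge\|g\|(1-\delta/2)$. The point $(\|\hat x\|u',\|\hat y\|v_0)$ lies in $B_{X\oplus_p Y}$ and has $z^*$-value greater than $(1-\delta/4)(1-\delta/2)>1-\delta$.
\item Hence this point is within $\eta$ of $z$. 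Together with $\bigl|\|\hat x\|-\|x\|\bigr|\le\|\hat x-x\|<\eta$, this gives $\bigl\|u'-\frac{x}{\|x\|}\bigr\|<\frac{2\eta}{\|x\|}$.
\end{enumerate}
Scaling instead by $\|f\|^{q-1}$ and $\|g\|^{q-1}$ also works. Their $p$-th powers sum to $\|f\|^q+\|g\|^q=1$, and they give $z^*$-value at least $1-\delta/2$. With either change, the rest of your outline goes through.
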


\begin{proof}
Let $q >0$ be such that $1/p + 1/q = 1$. Let $(x, y)$ be a semi denting point of $B_{X \oplus_p Y}$. 
%The case when $x=0$ or $y=0$ follows similarly as  in Theorem \ref{l1}
We will prove that $\frac{x}{\Vert x \Vert}$ is a semi denting point of $B_X$ whenever $x \neq 0$. The other case follows similarly. 
So, let $x \neq 0$ and $0<\varepsilon <\|x\|$.
 Then there exist $\delta >0$ and $(x^*,y^*) \in S_{X^* \oplus_q Y^*}$ such that
$$ 
S(B_{X \oplus_p Y}, (x^*, y^*), \delta) \subset B((x, y), \varepsilon). 
$$
%where  $(x^*, y^*) \in S_{X^* \oplus_q Y^*}$, with $\frac{1}{p} + \frac{1}{q} =1$. \\
We claim that $x^* \neq 0.$ 
%and $y^* \neq 0$. 
Suppose, on the contrary, that $x^* =0$.
%or $y^*=0$. Without loss of generality, let $y^*=0$. 
Then $y^* \in S_{Y^*}$. Let us now choose $\hat{y} \in S(B_{Y}, y^*, \delta)$. Then,
\begin{align*}
 (0,\hat{y}) &  \in S(B_{X \oplus_p Y}, (x^*, y^*), \delta) \subset B((x, y), \varepsilon).
\end{align*}
So, $$ \varepsilon^p > \Vert (0, \hat{y})-(x,y) \Vert_p^p = \Vert x \Vert^p + \Vert \hat{y} - y \Vert^p  \geq \Vert x \Vert^p, $$
which leads to a contradiction. Hence, our claim follows.\\
%Let us now consider the slice $S(B_{X}, \frac{x^*}{\Vert x^* \Vert}, \frac{\delta}{2})$ 
%and $S(B_Y, \frac{y^*}{\Vert y^* \Vert}, \frac{\delta}{2})$ 
%of $B_{X}.$ 
%and $B_Y$, respectively. 
Let us choose $(\hat{x}, \hat{y}) \in S(B_{X \oplus_p Y}, (x^*, y^*), \frac{\delta}{4}) \subset B((x, y), \varepsilon)$. 
Let $x_0 \in S(B_{X}, \frac{x^*}{\Vert x^* \Vert}, \frac{\delta}{2})$ and $y_0\in B_Y$ be such that $y^*(y_0)\geqslant \|y^*\|(1-\frac{\delta}{2}).$ 
%$y_0 \in S(B_Y, \frac{y^*}{\Vert y^* \Vert}, \frac{\delta}{2})$. 
Then, we obtain that $( \Vert \hat{x} \Vert x_0 , \Vert \hat{y} \Vert y_0 ) \in S(B_{X \oplus_p Y}, (x^*, y^*), \delta)$. Indeed, 
\begin{align*}
 \Vert \hat{x} \Vert x^*(x_0) +  \Vert \hat{y} \Vert y^*(y_0)  & >   (\Vert \hat{x} \Vert \Vert x^* \Vert + \Vert \hat{y} \Vert \Vert y^* \Vert) (1-\frac{\delta}{2})\\
& \geqslant (x^*, y^*) (\hat{x}, \hat{y}) (1-\frac{\delta}{2})\\
& > (1-\frac{\delta}{4})(1-\frac{\delta}{2})\\
& > 1-\delta.
\end{align*}
So, $( \Vert \hat{x} \Vert x_0 , \Vert \hat{y} \Vert y_0 ) \in B((x, y), \varepsilon)$. This implies that 
\begin{align*}
& \Vert ( \Vert \hat{x} \Vert x_0 , \Vert \hat{y} \Vert y_0 ) - (x, y) \Vert_p < \varepsilon \\
& \hspace{-0.3cm} \Rightarrow \Vert \Vert \hat{x} \Vert x_0 - x \Vert^p + \Vert \Vert \hat{y} \Vert y_0 - y \Vert^p < \varepsilon^p \\
& \hspace{-0.3cm} \Rightarrow  \Vert \Vert \hat{x} \Vert x_0 - x \Vert < \varepsilon ~ \text{and} ~ \Vert \Vert \hat{y} \Vert y_0 - y \Vert  < \varepsilon .
\end{align*}
Now, 
\begin{align*}
\Vert \Vert x \Vert x_0 - x \Vert & \leq \Vert \Vert \hat{x} \Vert x_0 - x \Vert + \bigg  \vert \Vert \hat{x} \Vert - \Vert x \Vert \bigg \vert  \Vert x_0 \Vert \\
& < \varepsilon + \Vert \hat{x} - x \Vert < \varepsilon + \varepsilon = 2 \varepsilon.
\end{align*}
%Similarly, $\Vert \Vert y \Vert y_0 - y \Vert < 2 \varepsilon.$
Finally, 
\begin{align*}
\bigg \Vert x_0 - \frac{x}{\Vert x \Vert} \bigg \Vert & = \frac{\Vert \Vert x \Vert x_0 - x \Vert}{\Vert x \Vert} < \frac{2 \varepsilon}{\Vert x \Vert}.
\end{align*}
%In a similarly way we also have, $\bigg \Vert y_0 - \frac{y}{\Vert y \Vert} \bigg \Vert < \frac{2 \varepsilon}{\Vert y \Vert}.$
This proves that $S \left(B_{X}, \frac{x^*}{\Vert x^* \Vert}, \frac{\delta}{2} \right ) \subset B \left (\frac{x}{\Vert x \Vert}, \frac{2 \varepsilon }{\Vert x \Vert} \right ).$ 
%and $S \bigg (B_{Y}, \frac{y^*}{\Vert y^* \Vert}, \frac{\delta}{2} \bigg ) \subset B \bigg (\frac{y}{\Vert y \Vert}, \frac{2 \varepsilon }{\Vert y \Vert} \bigg )$.
%Since $\varepsilon$ is independent of the choices of $x$ and $y$, so, 
Hence, $\frac{x}{\Vert x \Vert} $ is semi denting point of $B_X.$ 
%and $\frac{y}{\Vert y \Vert} \in $ semi dent  $(B_Y)$, respectively. 

Conversely, suppose that
%conditions (i) and (ii) hold.
$\frac{x}{\Vert x \Vert}$ is a semi denting point of $B_X$ whenever $x \neq 0$ and $\frac{y}{\Vert y \Vert}$ is a semi denting point of $B_Y$ whenever $y \neq 0$.

%if $x \in$ semi dent $B_X$ and $y=0$ or $x=0$ and $y \in $semi dent $B_Y$, then $(x,y) \in$ semi dent $B_{X \oplus_p Y}$. This follows from the arguments in the $\ell_1$ case.
Case 1: Either $x=0$ or $y=0.$

Without loss of generality, let $y=0$. Then $x\in S_X$ is a semi denting point of $B_X$. Let $\varepsilon >0.$ So, there exists a slice $S(B_{X}, x^*, \alpha)$ of $B_{X}$ with $x^*\in S_{X^*}$ and $\alpha>0$ with $(1-(1-\alpha)^p)^{1/p}<\varepsilon$ such that $S(B_{X}, x^*, \alpha) \subset B(x, \varepsilon)$. Then, for $q$ satisfying $1/p + 1/q = 1$, we have $z^*:=(x^*, 0) \in S_{X^* \oplus_q Y^*}$ and 
%consider  slice $S(B_{X \oplus_p Y}, z^*, \alpha)$ of $B_{X \oplus_1 Y}$ such that
\begin{align*}
S(B_{X \oplus_p Y}, z^*, \alpha) & \subset S(B_{X}, x^*, \alpha) \times \varepsilon B_{Y} \\
 & \subset B(x, \varepsilon) \times \varepsilon B_{Y} \\
 & \subset B((x, 0), 2 \varepsilon).
\end{align*}
 This shows that $(x,0)$ is a  semi denting point of $B_{X \oplus_p Y}.$ 

%Our first claim is $x^*\neq 0.$ If not, then $\|y^*\|=1$ and we choose $y_0\in S_Y$ such that $y^*(y_0)>1\delta.$ Then $(0,y_0)\in $

Case 2: Both $x$ and $y$ are nonzero.
%So, let $x, y \neq 0$ and 

Then $\frac{x}{\|x\|} $ is a semi denting point of $B_X$ and $\frac{y}{\|y\|}$ is a semi denting point of $B_Y$. We need to show that $z:=(x,y)$ is a semi denting point of $B_{X \oplus_p Y}$. 
Let $0 <\e < \min\{\frac{8\|x\|}{3}, 2\}$. Then  there exist $x^* \in S_{X^*}$, $y^* \in S_{Y^*}$ and $\delta >0$
%$\del(x,\e),\del(y,\e)>0$ 
%$\del(\e)=\min\{\del(x,\e),\del(y,\e)\} >0$ 
such that 
$$
S(B_X, x^*, \delta) \subseteq B(\frac{x}{\Vert x \Vert}, \frac{\varepsilon}{4}),
$$
and
$$
S(B_Y, y^*, \delta) \subseteq B (\frac{y}{\Vert y \Vert}, \frac{\varepsilon}{4}).
$$
%Choose $x^*$ and $y^*$ and $\delta$ as above for $\e/4$.
Let $z_1^*=\|x\|^{\frac{p}{q}}x^*$, $z_2^*=\|y\|^{\frac{p}{q}}y^*$ and $z^*=(z_1^*,z_2^*)$.
%Hence from Lemma $\ref{tech1},$ we have 
%$$S(B_{X \oplus_p Y}, z^*, (\frac{3\varepsilon}{8})^p\delta)\bigcap (\|x\| S_X \times \|y\| S_Y) \subset %B[z,\frac{\varepsilon}{4}].$$

%Now, we go to the general case. 
Let $\Delta_p(\e)$ be the modulus of convexity of $\ell_p^2$. Since $\ell_p^2$ is uniformly convex, $\Delta_p(\e) >0$ for each $\e >0$. Choose $\alpha$ such that $0 <\alpha <\frac{\e}{2}$ and $\Delta_p(\frac{\alpha}{2})+\alpha <(\frac{3\e}{16})\delta$.\\
Let $w=(x_1,x_2) \in B_{X\oplus_p Y}$ be such that $z^*(w) > 1 - \Del_p(\frac{\alpha}{2})$. Then,
\[
 \|x\|^{\frac{p}{q}} \|x_1\| + \|y\|^{\frac{p}{q}}\|x_2\|  >\|x\|^{\frac{p}{q}}x^*(x_1)+\|y\|^{\frac{p}{q}}y^*(x_2) > 1 - \Del_p(\frac{\alpha}{2}).
\]
Let $\tilde{x} := (\|x\|)$ and $\tilde{y}:=(\|y\|)$. We then have, $\tilde{z}=(\tilde{x}, \tilde{y}) \in S_{\ell_p^2}$, $\tilde{w} := (\|x_1\|,\|x_2\|) \in B_{\ell_p^2}$ and $u := (\|x\|^{\frac{p}{q}},\|y\|^{\frac{p}{q}}) \in S_{\ell_q^2}$. Then,
\[
\tilde{z}(u) = \|x\| \|x\|^{\frac{p}{q}} + \|y\| \|y\|^{\frac{p}{q}}= 1 \quad \mbox{and} \quad \tilde{w}(u) > 1 - \Del_p(\frac{\alpha}{2}).
\]
%Here, we may consider $u$ acting as a functional on $\ell_p^2$. 
Also, $\tilde{w}(u) > 1 - \Del_p(\frac{\alpha}{2})$, which further implies that $\frac{\tilde{z}(u)+\tilde{w}(u)}{2}
>1-\Del_p(\frac{\alpha}{2})$, i.e., $\frac{\|\tilde{z}+\tilde{w}\|_p}{2} >1-\Del_p(\frac{\alpha}{2})$. So, by the definition of the modulus of convexity, we have
\[
\|\tilde{z}-\tilde{w}\|_p = \left( \bigg|\|x_1\| - \|x\|\bigg|^p +  \bigg|\|x_2\| - \|y\|\bigg|^p\right)^{\frac{1}{p}}< \alpha.
\]
Let $c=(c_1,c_2)$, where $c_1 = \frac{x_1\|x\|}{\|x_1\|}$ and $c_2=\frac{x_2\|x\|}{\|x_2\|}$. Clearly, $\|c_1\|=\|x\|$ and $\|c_2\|=\|y\|$. And
\[
\|w-c\|_p= \left(\bigg|\|x_1\| - \|x\|\bigg|^p + \bigg|\|x_2\| - \|y\|\bigg|^p\right)^{\frac{1}{p}}< \alpha.
\]
Since $z^*(w) >1-\Del_p(\frac{\alpha}{2})$, we have 
\begin{align*}
    z^*(c) \geq z^*(w) -\|w-c\|_p > 1-\Del_p(\frac{\alpha}{2}) - \alpha >1 - (\frac{3\e}{16})\delta.
\end{align*}
%, by the choice of $\alpha$. 
%By the first case,
Thus, from Lemma $\ref{tech1},$ we have 
\[
\|z-c\|_p <\frac{\e}{2}.
\]
Finally,
\[
\|z-w\|_p \leq \|z-c\|_p + \|c-w\|_p <\e.
\]
%which shows that $\frac{x}{\Vert x \Vert}$ is a semi denting point of $B_X$.
%In a similar way, we can also show that  
\end{proof}

The result in the previous theorem can be generalized to a countable family of Banach spaces. %For that, we need the following notation.

%For $1 < p <\infty$. For a collection $\{X_i\}_{i=1}^\infty$, define $X_p:=\bigoplus_p X_i=\{(x_i)_{i=1}^\infty: \sum_{i=1}^\infty \|x_i\|^p <\infty\}$ with $\|(x_i)\|_p=\left(\sum_{i=1}^\infty \|x_i\|^p\right)^{1/p}$.

\begin{theorem}Let $\{X_i : i\in I\}$ be a family of Banach spaces and
$X = \bigoplus_p X_i, ~ 1<p<\infty$. Then $x=(x_i)_{i=1}^\infty \in S_{X}$ is a semi denting point of $B_{X}$ if and only if  $\frac{x_i}{\|x_i\|}$ is a semi denting point of $B_{X_i}$ whenever $x_i \neq 0.$ 
\end{theorem}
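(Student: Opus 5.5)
The plan is to reduce to Theorem \ref{thmlp} by splitting $X=\bigoplus_p X_i$ as a two-summand $\ell_p$-sum. For a finite set $F\subset I$, write $X = X_F \oplus_p X_{F^c}$, where $X_F=\bigoplus_p\{X_i: i\in F\}$ and $X_{F^c}=\bigoplus_p\{X_i : i\notin F\}$. Induction on $|F|$ with Theorem \ref{thmlp} (and, for zero coordinates, Case 1 of its proof) shows that a norm one element $(u_i)_{i\in F}$ of $X_F$ is a semi denting point of $B_{X_F}$ exactly when each $u_i/\|u_i\|$ with $u_i\ne 0$ is a semi denting point of $B_{X_i}$.

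\emph{Necessity.} Suppose $x$ is a semi denting point of $B_X$ and $x_j\neq0$. Write $X=X_j\oplus_p X_{\{j\}^c}$ and $x=(x_j,x')$. The first half of the proof of Theorem \ref{thmlp} uses nothing about the second summand except that it is a Banach space, so it applies verbatim and shows that $x_j/\|x_j\|$ is a semi denting point of $B_{X_j}$.

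\emph{Sufficiency.} Fix $\varepsilon>0$. Since $\sum_i\|x_i\|^p=1$, the support of $x$ is countable, and we may choose a finite $F$ with $\|x_{F^c}\|_p<\eta$, where $\eta$ is small relative to $\varepsilon$. Let $s=\|x_F\|$, so that $s^p=1-\|x_{F^c}\|^p$. By the finite case, $x_F/s$ is a semi denting point of $B_{X_F}$. Choose $f\in S_{X_F^*}$ and $\delta>0$ with $S(B_{X_F},f,\delta)\subset B(x_F/s,\eta)$, and consider the functional $z^*=(f,0)$ on $X$. A point $w=(w_F,w_{F^c})\in S(B_X,z^*,\delta')$ satisfies $f(w_F)>1-\delta'$. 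This forces $\|w_F\|>1-\delta'$, so $\|w_{F^c}\|^p<1-(1-\delta')^p$, which is small; it also gives $w_F/\|w_F\|\in S(B_{X_F},f,\delta)$ once $\delta'\le\delta/2$. Hence $\|w_F-x_F/s\|$ is small, and since $s$ is close to $1$ and $\|x_{F^c}\|<\eta$, we get $\|w-x\|_p\le \|w_F-x_F\|+\|w_{F^c}\|+\|x_{F^c}\|$, which is small. This is the same mechanism as Case 1 of Theorem \ref{thmlp}, with $x_F/s$ in place of $x$ and the tail treated as negligible.

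The main obstacle is that $x_F$ does not have norm one, so the slice produced by the finite case is centered at $x_F/s$ rather than at $x_F$. This costs an error of order $1-s\le 1-(1-\eta^p)^{1/p}$, which is controlled by choosing $F$ large. The remaining work is bookkeeping, namely choosing $\eta$, then $\delta$, then $\delta'$ so that the total error is less than $\varepsilon$. The case of a finite index set is covered directly by the induction.
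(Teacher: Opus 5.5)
Your proposal is correct and follows essentially the same route as the paper. Necessity comes from writing $X = X_{i_0}\oplus_p Z$ and invoking Theorem \ref{thmlp}. Sufficiency comes from four steps: truncate to a finite block (the paper takes $\{1,\dots,n_0\}$), apply the inductively extended finite case to get a slice of $B_W$ near $w/\|w\|$, extend the functional by zero, and control both the tail and the normalization error.

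The differences are cosmetic. You allow an arbitrary index set via countable support. You also normalize $w_F/\|w_F\|$, which is harmless but unnecessary, since $w_F$ itself already lies in the slice of $B_{X_F}$. The paper instead uses slice depth $1-(1-\delta^p)^{1/p}$ and places the truncation directly in $S(B_W,w^*,\delta)$.
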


\begin{proof}
Let $x$ be a semi denting point of $B_{X}$ and $i_0 \in \N$ be such that $x_{i_0} \neq 0$. Also, let $Z=\mathop{\bigoplus_p}\limits_{i \neq i_0} X_i$.  
%$X$ is isometrically isomorphic to $X_{i_0} \oplus Z$. 
Then $X= X_{i_0} \oplus_p Z.$ Thus,
by Theorem ~\ref{thmlp}, $\frac {x_{i_0}}{\|x_{i_0}\|} $ is a semi denting point of $B_{X_{i_0}}$.

Conversely, suppose that $\frac {x_i}{\|x_i\|}$ is a semi denting point of $B_{X_i}$ for each $i$ such that $x_i \neq 0$ and $\e >0$. 
%Since $(x_i)_{i=1}^{\infty} \in S_{X}$, $\sum\limits_{i=1}^\infty \|x_i\|^p = 1$. 
Choose $n_0 \in \N$ such that $\sum\limits_{i >n_0}\|x_i\|^p < (\frac{\e}{8})^p$. So, $\sum_{i \leq n_0}\|x_i\|^p >1-(\frac{\e}{8})^p >(1-\frac{\e}{8})^p$.

Consider $W:=\mathop{\bigoplus_p}\limits_{i=1}^{n_0} X_i$. 
%{ \bf This notation needs to change }
Let 
%$z=(x_i)_{i=1}^{\infty}$, 
$w=(x_i)_{i=1}^{n_0}$ and $\tilde{w}=\frac{w}{\|w\|_p}$. Note that Theorem ~\ref{thmlp} can be inductively extended to a finite sum of Banach spaces. So, it follows that $\tilde{w}$ is a semi denting point of $B_W$. Therefore, there exist $0< \del <\frac{\e}{8}$ and $w^* \in S_{W^*}$ such that
\[
S(B_W,w^*, \del) \subseteq B(\tilde{w},\frac{\e}{8}).
\]
Let $z^*=(z^*_i)_{i=1}^{\infty}$ be defined as 
\[
z^*_i =
\begin{cases}
w^*_i &\text{if } i \leq n_0, \\
 0 &\text{otherwise}.
 \end{cases}
 \]
 Clearly, $z^* \in S_{X^*}$. 
 Let $y:=(y_i)_{i=1}^\infty \in B_X$ and $v=(y_i)_{i=1}^{n_0}$. Let $z^*(y) >(1-\del^p)^{\frac{1}{p}}$. So, $w^*(v) >(1-\del^p)^{\frac{1}{p}}>1-\del$, which further gives $\|v\|_p^p >1-\del^p$. i.e., $\sum\limits_{i \leq n_0}\|y_i\|^p >1-\del^p$. Therefore, $\sum\limits_{i >n_0}\|y_i\|^p <\del^p$ and hence, $\left(\sum\limits_{i>n_0} \|y_i\|^p\right)^{\frac{1}{p}} <\del <\frac{\e}{8}$.
Also, $\|v-\tilde{w}\|_p <\frac{\e}{8}$. Hence, 
\begin{align*}
    \|w-v\|_p & \leq \|w-\tilde{w}\|_p + \|\tilde{w}-v\|_p \\
    & \leq \frac{\e}{8} + \frac{\e}{8} =\frac{\e}{4}.
\end{align*}
Finally,
\beqa
 \|y-x\|_p &=& \left(\sum_{i=1}^\infty\|y_i-x_i\|^p\right)^{\frac{1}{p}}\\
 &\leq &\left(\sum_{i \leq n_0}\|y_i-x_i\|^p\right)^{\frac{1}{p}} + \left(\sum_{i >n_0} \|y_i-x_i\|^p\right)^{\frac{1}{p}}\\
 &\leq& \|v-w\|_p +  \left(\sum_{i >n_0} \|y_i\|^p\right)^{\frac{1}{p}} + \left(\sum_{i >n_0} \|x_i\|^p\right)^{\frac{1}{p}}\\
 &<& \frac{\e}{4} + \frac{\e}{8} + \frac{\e}{8}\\
 &<& \e.
\eeqa
\end{proof}

%\begin{corollary}
 %   Let $(X_i)_i$ be a sequence of Banach spaces and $(x_i)_i \in S_{\oplus_{\infty}X_i}$ be a semi SCS point of $B_{\oplus_{\infty}X_i}$, then $x_i \in S_{X_i}$ is a semi SCS point of $B_{X_i}$, for all $i\in \mathbb{N}$.
%\end{corollary}

\begin{theorem} \label{l1} 
$(x,y) \in S_{X \oplus_1 Y}$ is a semi  denting point of $B_{X \bigoplus_1 Y}$ if and only if either $x$ is a semi denting point of $B_X$ and $y=0$ or $y$ is a semi denting point of $B_Y$ and $x=0$.
%one of the following holds. \begin{enumerate} \item $x$ is a semi denting point of $B_X$ and $y=0$. \item $y$ is a semi denting point of $B_Y$ and $x=0$. \end{enumerate}
%Then the following conditions hold.
%\begin{enumerate}
 %   \item If $x \in S_X$ is semi denting point of $B_X$, then $(x,0)$ is a semi denting point of $B_{X \oplus_p Y},$ for $1 \leq p < \infty$.

    %\item If $(x,0) \in S_{X \oplus_p Y}$ is a semi denting point of $_{X \oplus_p Y}$, for $1<p<\infty$, then $x$ is a semi denting point of $B_X$.
    
%\end{enumerate}
\end{theorem}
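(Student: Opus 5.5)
We prove the two directions separately. The converse direction is easy and mirrors Case 1 of Theorem \ref{thmlp}. Suppose $y=0$ and $x\in S_X$ is a semi denting point of $B_X$. Given $\varepsilon>0$, choose $x^*\in S_{X^*}$ and $\alpha\in(0,\varepsilon)$ with $S(B_X,x^*,\alpha)\subset B(x,\varepsilon)$. Put $z^*=(x^*,0)\in S_{X^*\oplus_\infty Y^*}$, and let $(u,v)\in S(B_{X\oplus_1 Y},z^*,\alpha)$. Then $x^*(u)>1-\alpha$, and hence $\|u\|>1-\alpha$ and $\|v\|\le 1-\|u\|<\alpha$. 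Since $u\neq 0$ and $x^*(u/\|u\|)\ge x^*(u)>1-\alpha$, the point $u/\|u\|$ lies in the slice of $B_X$, so $\|u/\|u\|-x\|<\varepsilon$. It follows that $\|u-x\|\le \|u-u/\|u\|\|+\|u/\|u\|-x\|<\alpha+\varepsilon$. Therefore $\|(u,v)-(x,0)\|_1<\alpha+\varepsilon+\alpha<3\varepsilon$, and $(x,0)$ is semi denting.

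For the forward direction, assume $(x,y)$ is a semi denting point of $B_{X\oplus_1 Y}$. The key step, and the one I expect to be the main obstacle, is to show that one of $x,y$ must vanish. Suppose both are nonzero. Take $0<\varepsilon<\min\{\|x\|,\|y\|\}$ and a slice $S=S(B_{X\oplus_1Y},(x^*,y^*),\delta)\subset B((x,y),\varepsilon)$ with $\max\{\|x^*\|,\|y^*\|\}=1$. Without loss of generality $\|x^*\|=1$. Pick $x_0\in S_X$ with $x^*(x_0)>1-\delta$; then $(x_0,0)\in S$. But $\|(x_0,0)-(x,y)\|_1\ge\|y\|>\varepsilon$, a contradiction. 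The structural fact being used is that $B_{X\oplus_1 Y}$ is the convex hull of $B_X\times\{0\}$ and $\{0\}\times B_Y$, so every slice contains a point from one of the two "axes", and that point is far from $(x,y)$ whenever both coordinates are nonzero. Hence $x=0$ or $y=0$.

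It remains to treat the case $y=0$, so that $x\in S_X$, and show that $x$ is semi denting in $B_X$. Given $\varepsilon\in(0,1)$, take a slice $S((x^*,y^*),\delta)\subset B((x,0),\varepsilon)$ as above. If $\|y^*\|=1$, then some $(0,y_0)$ lies in the slice, but $\|(0,y_0)-(x,0)\|_1\ge 1>\varepsilon$, which is impossible. Thus $\|x^*\|=1$. Now let $u\in S(B_X,x^*,\delta)$. Then $(u,0)$ lies in the slice, so $\|u-x\|=\|(u,0)-(x,0)\|_1<\varepsilon$. This gives $S(B_X,x^*,\delta)\subset B(x,\varepsilon)$, so $x$ is a semi denting point of $B_X$. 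The case $x=0$ is symmetric.
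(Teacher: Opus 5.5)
Your proposal is correct and follows the paper's proof essentially step by step: an axis point $(x_0,0)$ or $(0,y_0)$ always lies in the slice, which forces one coordinate to vanish and then transfers the slice to $B_X$, and the converse uses the functional $(x^*,0)$. The only deviation is the normalization $u/\|u\|$ in the converse, which is unnecessary: $u\in B_X$ with $x^*(u)>1-\alpha$ already lies in $S(B_X,x^*,\alpha)$, so you get the paper's bound $2\varepsilon$ directly, and you avoid the side condition $\alpha<1$ that your version needs to ensure $u\neq 0$.
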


\begin{proof}
   Let $(x,y) \in S_{X \oplus_1 Y}$ be a semi denting point of $B_{X \bigoplus_1 Y}$. If possible, let  $x, y \neq 0$. Choose $\varepsilon >0$ such that $\varepsilon <\min\{\|x\|,\|y\|\}$. Then, there exists a slice $S(B_{X \oplus_1 Y}, (x^*, y^*), \delta)$ of $B_{X \oplus_1 Y}$ such that 
$$
S(B_{X \oplus_1 Y}, (x^*, y^*), \delta) \subset B((x, y), \varepsilon),
$$
 where $(x^*, y^*) \in S_{X^* \oplus_\infty Y^*}$ and $\delta >0$. Then either $\|x^*\|=1$ or $\|y^*\|=1$.
Without loss of generality, let $\Vert x^* \Vert =1$ and choose $x_0 \in S(B_{X}, x^*, \delta).$ Then
$(x_0, 0) \in S(B_{X \oplus_1 Y}, (x^*, y^*), \delta) \subset B((x, y), \varepsilon).$
This implies that $$\varepsilon > \Vert (x_0, 0)-(x,y) \Vert = \Vert x_0 - x \Vert + \Vert y \Vert \geq \Vert y \Vert,$$ which leads to a contradiction. Similarly, we get a contradiction if $\Vert y^* \Vert =1$. Hence $x,y$  both cannot be nonzero simultaneously. 
 Without loss of generality, let $y=0$. Let $0<\varepsilon <1$. Since $(x,0)$ is semi denting point of $B_{X \bigoplus_1 Y}$, there exists a slice $S(B_{X \oplus_1 Y}, (x^*, y^*), \delta)$ of $B_{X \oplus_1 Y}$ such that $$S(B_{X \oplus_1 Y}, (x^*, y^*), \delta) \subset B((x, 0), \varepsilon),$$ where $(x^*, y^*) \in S_{X^* \oplus_\infty Y^*}$ and $\delta >0$. We claim that $\|x^*\|=1.$ If not, then $\|y^*\|=1$ and we choose $y_0\in S_Y$ such that $y^*(y_0)>1-\delta.$ Then $$(0,y_0)\in S(B_{X \oplus_1 Y}, (x^*, y^*), \delta) \subset B((x, 0), \varepsilon).$$
Thus $1<\|x\|+\|y_0\|= \|(0,y_0)-(x,0)\|<\varepsilon,$ a contradiction.
%We now consider the two cases. \\
%Case 1: 
Hence, $\|x^*\|=1$. Now, let $x_0 \in S(B_{X}, x^*, \delta)$. Then 
\begin{align*}
& (x_0, 0) \in S(B_{X \oplus_1 Y}, (x^*, y^*), \delta) \subset B((x, 0), \varepsilon) \\
& \hspace{-0.3cm} \Rightarrow \Vert (x_0, 0) - (x, 0) \Vert < \varepsilon \\
& \hspace{-0.3cm} \Rightarrow \Vert x_0 - x \Vert < \varepsilon.
\end{align*}
%which proves that $x_0 \in B(x, \varepsilon)$. 
Hence, $S(B_{X}, x^*, \delta) \subset B(x, \varepsilon)$. Thus $x$ is a semi denting point of $B_{X}$.

Conversely, let us consider the case when $x$ is a semi denting point of $B_X$ and $y=0$. Let $\varepsilon >0$. Then there exists a slice $S(B_{X}, x^*, \alpha)$ of $B_{X}$ with $x^*\in S_{X^*}$ and $0<\alpha<\varepsilon$ such that $S(B_{X}, x^*, \alpha) \subset B(x, \varepsilon)$. Then, for $z^*=(x^*, 0) \in S_{X^* \oplus_\infty Y^*}$, 
%consider  slice $S(B_{X \oplus_1 Y}, z^*, \alpha)$ of $B_{X \oplus_1 Y}$ such that
we have
\begin{align*}
S(B_{X \oplus_1 Y}, z^*, \alpha) & \subset S(B_{X}, x^*, \alpha) \times \varepsilon B_{Y} \\
 & \subset B(x, \varepsilon) \times \varepsilon B_{Y} \\
 & \subset B((x, 0), 2 \varepsilon).
\end{align*}
 This shows that $(x,0)$ is a semi denting point of $B_{X \oplus_1 Y}.$ 

 Similarly, we can show the other case.
\end{proof}

\begin{corollary}\label{inf1}
Let $\{X_i : i\in \I\}$ be a family of Banach spaces 
  $X=\mathop{\bigoplus_1}\limits_{i\in \I}X_i$ and $x=(x_i)_{i\in I} \in S_{X}.$ Then 
  %be such that $x_i$ is semi denting in $B_{X_i}$ for each $i$.  Then, $(x_i)$ is a semi denting point of $B_{X}$ if and only if $\|x_{i_0}\|=1$ for some $i_0 \in \mathbb{N}$.  
  $x$ is a semi denting point of $B_X$ if and only if there exists unique $i_0\in \I$ such that $x_{i_0}$ is a semi denting point of $B_{X_{i_0}}$ and $x_i=0$ for all $i\neq i_0.$
\end{corollary}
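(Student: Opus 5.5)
The natural route is to reduce the corollary to Theorem \ref{l1}, using the decomposition $X = X_{i_0}\oplus_1 Z$ with $Z=\mathop{\bigoplus_1}\limits_{i\neq i_0}X_i$. This is the same device used for the countable $\ell_p$ case. For any index $i_0$ we have $X_{i_0}\oplus_1 Z \cong X$ isometrically, via $(u,(v_i)_{i\neq i_0})\mapsto$ the family with $u$ in coordinate $i_0$ and $v_i$ elsewhere. So both directions will come from Theorem \ref{l1} applied to the pair $(X_{i_0},Z)$.

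\emph{Necessity.} Let $x$ be a semi denting point of $B_X$. Since $x\in S_X$, some coordinate $x_{i_0}\neq 0$. Write $x=(x_{i_0},x')$ with $x'=(x_i)_{i\neq i_0}\in Z$. By Theorem \ref{l1} applied to $X_{i_0}\oplus_1 Z$, either $x_{i_0}$ is a semi denting point of $B_{X_{i_0}}$ and $x'=0$, or $x_{i_0}=0$ and $x'$ is a semi denting point of $B_Z$. The second alternative is excluded because $x_{i_0}\neq 0$. Hence $x'=0$, that is, $x_i=0$ for all $i\neq i_0$, and $x_{i_0}$ is a semi denting point of $B_{X_{i_0}}$. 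Since $\|x\|=1$ forces $\|x_{i_0}\|=1$, the point $x_{i_0}$ lies in $S_{X_{i_0}}$, as Theorem \ref{l1} requires. Uniqueness of $i_0$ is immediate: any other index $j$ with the stated property would need $x_j\neq 0$ (a semi denting point of $B_{X_j}$ must be nonzero, since it lies in $S_{X_j}$ by the norm count), contradicting $x_j=0$ for $j\neq i_0$.

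\emph{Sufficiency.} Suppose $x_{i_0}$ is a semi denting point of $B_{X_{i_0}}$ and $x_i=0$ for $i\neq i_0$. Then $x=(x_{i_0},0)$ in $X_{i_0}\oplus_1 Z$, and the converse half of Theorem \ref{l1} gives directly that $x$ is a semi denting point of $B_X$.

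The only point needing care is that the index set $\I$ may be arbitrary, possibly uncountable. I expect this to be harmless, because the argument never enumerates coordinates. It uses only the isometric splitting $X\cong X_{i_0}\oplus_1 Z$, under which slices, balls and norms correspond exactly, so semi denting points are preserved. I expect this identification, rather than any estimate, to be the main thing to state explicitly.
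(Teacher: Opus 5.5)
Your proposal is correct and follows essentially the same route as the paper: both proofs identify $X$ isometrically with $X_{i_0}\oplus_1 Z$, where $Z$ is the $\ell_1$-sum of the $X_i$ with $i\neq i_0$, and then apply Theorem~\ref{l1} in each direction. The only difference is in the forward direction. The paper first rules out two nonzero coordinates by contradiction and then applies Theorem~\ref{l1} a second time to $(x_{i_0},0)$, whereas you fix a nonzero coordinate and read both conclusions off the dichotomy in Theorem~\ref{l1} at once.
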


\begin{proof}
    Let  $x$ be a semi denting point. Suppose that there exist $i_1, i_2 \in \I$ such that both $x_{i_1}$ and $x_{i_2}$ are nonzero. Consider $Z=\mathop{\bigoplus_1}\limits_{i\neq i_1}X_i$ Then $(x_{i_1},z)$ is a semi denting point of $B_{X_{i_1}\bigoplus_1 Z}=B_X$ with both $x_{i_1}$ and $z$, nonzero, which is a contradiction to Theorem ~\ref{l1}. Hence, there exists a unique $i_0\in \I$ such that $x_i=0$ for all $i\neq i_0.$ Thus $(x_{i_0},0)$ is a semi denting point of $B_{X_{i_0}\bigoplus_1 Z'},$ where $Z'=(\mathop{\bigoplus_1}\limits_{i\neq i_0}X_i)$ Then from Theorem ~\ref{l1}, it directly follows that $x_{i_0}$ is a semi denting point of $B_{X_{i_0}}.$
    
%    $\|x_i\| <1$ for all $i$. Then, there exists $i_1, i_2$ such that $0 <\|x_{i_1}\|, \|x_{i_2}\|$. We may write $X=X_{i_1}\bigoplus_1 Z$, where $Z=\bigoplus_{i\neq i_1}X_i$ and $x=(x_{i_1},z)$ for some $z \neq 0$ in $Z$. We have, $\|x_{i_1}\| \neq 0$ and $\|z\| \neq 0$. By Theorem ~\ref{thmlp}, $x$ is not a semi denting point of $B_{X}$, which is a contradiction. 

    Conversely, let there exist unique $i_0\in \I$ such that $x_{i_0}$ is a semi denting point of $B_{X_{i_0}}$ and $x_i=0$ for all $i\neq i_0.$ Then from Theorem ~\ref{l1}, it directly follows that  $x=(x_{i_0},0)$ is a semi denting point of $B_{X_{i_0}\bigoplus_1 Z'}=B_X$, where $Z'=(\mathop{\bigoplus_1}\limits_{i\neq i_0}X_i)$.
    %    $\|x_{i_0}\|=1$ for some $i_0$. We may write $X^{(1)}=X_{i_0} + Z$, where $Z=\bigoplus_{i\neq i_0}X_i$ and $x=(x_{i_0},z)$. Since, $\|z\|=0$, by Theorem ~\ref{p1}, we have that $x$ is a semi denting point of $B_{X}$.
\end{proof}

Following a technique similar to that used in Theorem $\ref{l1}$ and Corollary $\ref{inf1}$, one obtains the following.

\begin{corollary}
Let $\{X_i : i\in \I\}$ be a family of Banach spaces 
  $X=\mathop{\bigoplus_1}\limits_{i\in \I}X_i$ and $x=(x_i)_{i\in I} \in S_{X}.$ Then 
  %be such that $x_i$ is semi denting in $B_{X_i}$ for each $i$.  Then, $(x_i)$ is a semi denting point of $B_{X}$ if and only if $\|x_{i_0}\|=1$ for some $i_0 \in \mathbb{N}$.  
  $x$ is a denting point of $B_X$ if and only if there exists a unique $i_0\in \I$ such that $x_{i_0}$ is a denting point of $B_{X_{i_0}}$ and $x_{i_0}=0$ for all $i\neq i_0.$
\end{corollary}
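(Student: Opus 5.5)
The plan is to first prove a two-summand version for denting points, mirroring Theorem~\ref{l1}, and then pass to an arbitrary family by splitting off one coordinate, as in Corollary~\ref{inf1}. (The condition ``$x_{i_0}=0$ for all $i\neq i_0$'' in the statement is read as $x_i=0$ for all $i\neq i_0$.)

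\textbf{Two-summand claim.} We show that $(x,y)\in S_{X\oplus_1 Y}$ is a denting point of $B_{X\oplus_1 Y}$ if and only if either $y=0$ and $x$ is a denting point of $B_X$, or $x=0$ and $y$ is a denting point of $B_Y$.

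\emph{Forward direction.} A denting point is in particular a semi denting point. By Theorem~\ref{l1}, one coordinate vanishes; say $y=0$. Fix $0<\varepsilon<1$ and choose a slice $S=S(B_{X\oplus_1 Y},(x^*,y^*),\delta)$ with $(x,0)\in S\subset B((x,0),\varepsilon)$. As in the proof of Theorem~\ref{l1}, $\|x^*\|=1$: otherwise $\|y^*\|=1$, and some $(0,y_0)$ would lie in $S$ at distance greater than $1$ from $(x,0)$. Since $(x,0)\in S$, we get $x^*(x)>1-\delta$, so $x\in S(B_X,x^*,\delta)$. For any $x_0$ in this slice, $(x_0,0)\in S$, hence $\|x_0-x\|<\varepsilon$. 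Thus $x$ is a denting point of $B_X$.

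\emph{Converse.} Let $x$ be a denting point of $B_X$, and let $\varepsilon>0$. Choose $x^*\in S_{X^*}$ and $0<\alpha<\varepsilon$ with $x\in S(B_X,x^*,\alpha)\subset B(x,\varepsilon)$. Put $z^*=(x^*,0)\in S_{X^*\oplus_\infty Y^*}$. Then $(x,0)$ lies in $S(B_{X\oplus_1 Y},z^*,\alpha)$, since $z^*(x,0)=x^*(x)>1-\alpha$. Now take $(u,v)$ in this slice. We have $\|u\|+\|v\|\le 1$ and $x^*(u)>1-\alpha$, which gives $\|v\|<\alpha<\varepsilon$ and $\|u\|>1-\alpha$.

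\emph{Main obstacle.} The point $u$ need not lie in $B_X$'s slice in the required normalized form, so we renormalize. If $v\neq0$, we have $\|u\|<1$; replace $u$ by $u/\|u\|$. This point satisfies $x^*(u/\|u\|)\ge x^*(u)>1-\alpha$, so it lies in $S(B_X,x^*,\alpha)\subset B(x,\varepsilon)$. Moreover $\|u-u/\|u\|\|=1-\|u\|<\alpha$. Hence
\[
\|(u,v)-(x,0)\|\le \|u/\|u\|-x\|+\alpha+\alpha<3\varepsilon .
\]
Actually the renormalization can be skipped, since $u\in B_X$ with $x^*(u)>1-\alpha$ already places $u$ in $S(B_X,x^*,\alpha)$; that gives the simpler bound $\|(u,v)-(x,0)\|<2\varepsilon$. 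In either case $(x,0)$ is a denting point. The subtle point compared with the semi case is only that the slice must contain $(x,0)$ itself, which holds as shown.

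\textbf{General family.} For the forward direction, suppose two coordinates $x_{i_1},x_{i_2}$ are nonzero. Write $X=X_{i_1}\oplus_1 Z$ with $Z=\bigoplus_{1,\,i\neq i_1}X_i$; then both components of $x$ are nonzero, contradicting the two-summand claim (or Corollary~\ref{inf1}, since $x$ is also semi denting). This gives a unique $i_0$ with $x_i=0$ for all $i\neq i_0$. Applying the claim to $X=X_{i_0}\oplus_1 Z'$, where $Z'=\bigoplus_{1,\,i\neq i_0}X_i$, shows that $x_{i_0}$ is a denting point of $B_{X_{i_0}}$. The converse follows by applying the claim's converse to the same decomposition $X=X_{i_0}\oplus_1 Z'$.
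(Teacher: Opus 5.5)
Your proposal is correct and follows exactly the route the paper indicates. The paper gives no separate proof; it says only that the result follows by the technique of Theorem~\ref{l1} and Corollary~\ref{inf1}, which is what you do: a two-summand version for denting points, then splitting off one coordinate. The one step you state without justification, taking $0<\alpha<\varepsilon$ while keeping $x$ in $S(B_X,x^*,\alpha)$, is harmless but also unnecessary. Since $u\in S(B_X,x^*,\alpha)\subset B(x,\varepsilon)$ and $\|x\|=1$, you already get $\|v\|\le 1-\|u\|<\varepsilon$, so the renormalization detour can simply be dropped.
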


\begin{corollary}
Let $\{X_i : i\in \I\}$ be a family of Banach spaces 
  $X=\mathop{\bigoplus_1}\limits_{i\in \I}X_i$. Then the set of semi denting and denting points coincides in $X$ if and only if the set of semi denting and denting points coincides in $X_i$ for all i. 
 %   $x \in \ell_1$ is a semi denting of $B_{\ell_1}$ if and only if $x= \pm e_i$ for some $i$. Thus, in $\ell_1$, semi denting and denting points coincide.
\end{corollary}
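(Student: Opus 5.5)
Both sides of the equivalence can be rewritten using Corollary~\ref{inf1} and the preceding corollary (the denting-point analogue for $\ell_1$-sums). For a point $x\in S_X$, we know exactly when it is semi denting and exactly when it is denting. In each case the answer is: $x$ is supported on a single coordinate $i_0$, and $x_{i_0}$ is semi denting (resp.\ denting) in $B_{X_{i_0}}$. So the statement reduces to comparing these two coordinatewise descriptions. Here ``the set of semi denting and denting points'' is read as referring to the unit balls $B_X$ and $B_{X_i}$, as throughout this section.

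\emph{($\Leftarrow$)} Assume that for every $i$ the semi denting points of $B_{X_i}$ coincide with its denting points. Every denting point is semi denting, so only one inclusion needs proof. Let $x$ be a semi denting point of $B_X$. By Corollary~\ref{inf1}, there is a unique $i_0$ with $x_i=0$ for $i\neq i_0$, and $x_{i_0}$ is a semi denting point of $B_{X_{i_0}}$. By hypothesis $x_{i_0}$ is then a denting point of $B_{X_{i_0}}$. The denting-point corollary now shows that $x$ is a denting point of $B_X$.

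\emph{($\Rightarrow$)} Assume the two sets coincide for $B_X$, and fix $i_0$ and a semi denting point $u$ of $B_{X_{i_0}}$. Note that $u\in S_{X_{i_0}}$, since semi denting points of the unit ball lie on the sphere. Define $x$ by $x_{i_0}=u$ and $x_i=0$ for $i\neq i_0$. Then $x\in S_X$, and by Corollary~\ref{inf1} it is a semi denting point of $B_X$. By hypothesis $x$ is denting. The denting-point corollary then yields that $x_{i_0}=u$ is a denting point of $B_{X_{i_0}}$.

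The only subtle step is confirming that semi denting points of $B_{X_i}$ lie in $S_{X_i}$, so that the embedded point $x$ lies in $S_X$ and the corollaries apply. The argument is short. If $\|u\|<1$, take $\varepsilon<1-\|u\|$. Every slice of $B_{X_i}$ contains points of norm arbitrarily close to $1$, so no slice is contained in $B(u,\varepsilon)$. Hence $u$ is not semi denting. Apart from this observation, the proof is bookkeeping built on the two earlier corollaries.
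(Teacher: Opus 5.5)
Your proposal is correct and follows the intended route. The paper gives no separate proof of this corollary; it is meant as an immediate consequence of Corollary~\ref{inf1} and the denting-point analogue stated just before it. Your two directions supply exactly that bookkeeping. Your sphere observation is also needed in the ($\Leftarrow$) direction, to place the semi denting point $x$ of $B_X$ in $S_X$ before invoking Corollary~\ref{inf1}, and the same argument covers it.
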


%\begin{corollary} \label{p1}
%Let $(X_i)_i$ be a sequence of Banach spaces. If $x_k \in S_{X_k}$ is semi denting point of $B_{X_k},$ then $x_k e_k \in S_{\oplus_p X_i},$ is a semi denting point of $B_{\oplus_p X_i},$ for $1 \leq p < \infty$, where $x_k e_k=(x_k\delta_1^k, x_k \delta_2^k, \ldots)$ for $k\in \mathbb{N}$. Moreover, the converse is true for $ 1 < p < \infty$.
%\end{corollary}

\begin{lemma} \label{BDP lem inf}
\cite[Theorem 2.29 (c)]{L1} 
%Let $Z= X\oplus_{\infty} Y.$ 
For every slice $S$ 
 %(where $z^*=(x^*,y^*)\in X^*\oplus_1 Y^*$)
 of $B_{X\oplus_{\infty} Y}$, 
  %with  $x^*\neq 0$ and $y^*\neq 0$,
 %the following conditions are true.
 there exists a slice $S_1$ of $B_X$ (resp. $S_2$ of $B_Y$) and $y_0 \in B_Y$ (resp. $x_0 \in B_X$) such that 
 %\begin{center}
 $S_1 \times \{y_0\} \subset S$ $($resp. $\{x_0\} \times S_2  \subset S).$
 %\end{center}
%\begin{enumerate}
%\item There exist a slice $S(B_X,x^*,\mu_1)$ of $B_X$ and $y_0 \in B_Y$ such that\\ $S(B_X,x^*,\mu_1) \times \{y_0\} \subset S(B_Z,z^*,\alpha).$
%\item There exist a slice $S(B_Y,y^*,\mu_2)$ of $B_Y$ and $x_0 \in B_X$ such that \\ $\{x_0\} \times S(B_Y,y^*,\mu_2)  \subset S(B_Z,z^*,\alpha).$ 
%\end{enumerate} 
\end{lemma}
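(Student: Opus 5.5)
Let $S=S(B_{X\oplus_\infty Y},z^*,\alpha)$ with $z^*=(x^*,y^*)\in X^*\oplus_1 Y^*$, normalized so that $\|z^*\|=\|x^*\|+\|y^*\|=1$. Then $\sup z^*(B_{X\oplus_\infty Y})=\|x^*\|+\|y^*\|=1$, because the unit ball of the $\ell_\infty$-sum is the product $B_X\times B_Y$ and the supremum splits coordinatewise. The idea is to fix the $Y$-coordinate at a point where $y^*$ nearly attains its norm, and then let the $X$-coordinate range over a slice of $B_X$ determined by $x^*$. The resulting product should lie in $S$. I prove the first inclusion $S_1\times\{y_0\}\subset S$; the second is symmetric.

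\emph{Case $x^*\neq 0$.} Choose $y_0\in B_Y$ with $y^*(y_0)>\|y^*\|-\alpha/2$; if $y^*=0$, any $y_0\in B_Y$ works. Define
\[
S_1=S\Bigl(B_X,\tfrac{x^*}{\|x^*\|},\tfrac{\alpha}{2\|x^*\|}\Bigr),
\]
which is a genuine slice of $B_X$. For $x\in S_1$ we have $x^*(x)>\|x^*\|-\alpha/2$. Hence
\[
z^*(x,y_0)=x^*(x)+y^*(y_0)>\|x^*\|+\|y^*\|-\alpha=1-\alpha,
\]
so $(x,y_0)\in S$.

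\emph{Case $x^*=0$ (the main obstacle).} Here $z^*(x,y)=y^*(y)$ does not depend on $x$, so any slice $S_1$ of $B_X$ works. I take an arbitrary $x_1^*\in S_{X^*}$ and set $S_1=S(B_X,x_1^*,1)$. I take $y_0\in B_Y$ with $y^*(y_0)>1-\alpha$, which exists since $\|y^*\|=1$. Then $S_1\times\{y_0\}\subset S$ trivially.

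The only delicate point is this degenerate case and the normalization. Once the supremum is computed as $\|x^*\|+\|y^*\|$ using $B_{X\oplus_\infty Y}=B_X\times B_Y$, the rest is a direct computation.
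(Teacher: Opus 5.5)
Your proof is correct. Normalize $\|x^*\|+\|y^*\|=1$ and fix $y_0$ with $y^*(y_0)>\|y^*\|-\alpha/2$. Then $S_1=S\bigl(B_X,x^*/\|x^*\|,\alpha/(2\|x^*\|)\bigr)$ works when $x^*\neq 0$, and any slice of $B_X$ works when $x^*=0$; in both cases $S_1\times\{y_0\}\subset S$. This is the standard argument behind the result the paper uses. The paper gives no proof of its own and simply cites \cite[Theorem 2.29 (c)]{L1}, so there is nothing further to compare.
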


\begin{theorem}\label{inf sum semi dent}
Let $(x,y)\in S_{X \oplus_\infty Y}.$
%$Z=X \oplus_\infty Y$. 
Then $(x, y)$ is a semi denting point of $B_{X \oplus_\infty Y}$  if and only if $x$ is a semi denting point of $B_X$ and $y$ is a semi denting point of $B_Y$.
\end{theorem}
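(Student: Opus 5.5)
The plan is to prove the two implications separately: the forward one reduces slices of the sum to slices of the factors via Lemma~\ref{BDP lem inf}, and the converse builds a slice of $B_{X \oplus_\infty Y}$ from an averaged functional.

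\emph{Forward direction.} Let $(x,y)$ be a semi denting point of $B_{X\oplus_\infty Y}$ and let $\varepsilon>0$. Choose a slice $S$ of $B_{X\oplus_\infty Y}$ with $S\subset B((x,y),\varepsilon)$. By Lemma~\ref{BDP lem inf} there are a slice $S_1$ of $B_X$ and a point $y_0\in B_Y$ with $S_1\times\{y_0\}\subset S$. For every $u\in S_1$ we then have
\[
\|u-x\|\le \max\{\|u-x\|,\|y_0-y\|\}=\|(u,y_0)-(x,y)\|_\infty<\varepsilon .
\]
Hence $S_1\subset B(x,\varepsilon)$, so $x$ is a semi denting point of $B_X$. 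The second half of Lemma~\ref{BDP lem inf} supplies $x_0\in B_X$ and a slice $S_2$ of $B_Y$ with $\{x_0\}\times S_2\subset S$, and the same estimate gives $S_2\subset B(y,\varepsilon)$. This direction carries essentially all the content, but it is already packaged in the cited lemma.

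\emph{Converse.} Suppose $x$ and $y$ are semi denting points of $B_X$ and $B_Y$, and let $\varepsilon>0$. Pick $x^*\in S_{X^*}$, $y^*\in S_{Y^*}$ and a common $\delta>0$ (the minimum of the two) such that
\[
S(B_X,x^*,\delta)\subset B(x,\varepsilon)\quad\text{and}\quad S(B_Y,y^*,\delta)\subset B(y,\varepsilon).
\]
Put $z^*=(\tfrac12x^*,\tfrac12y^*)\in S_{X^*\oplus_1Y^*}$. Its supremum over $B_{X\oplus_\infty Y}$ equals $1$.

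Now take $(u,v)\in S(B_{X\oplus_\infty Y},z^*,\delta/2)$, so that $\tfrac12x^*(u)+\tfrac12y^*(v)>1-\tfrac{\delta}{2}$. Since each of the two summands is at most $\tfrac12$, we get $\tfrac12x^*(u)>\tfrac12-\tfrac{\delta}{2}$, that is $x^*(u)>1-\delta$, and likewise $y^*(v)>1-\delta$. Therefore $u\in S(B_X,x^*,\delta)\subset B(x,\varepsilon)$ and $v\in S(B_Y,y^*,\delta)\subset B(y,\varepsilon)$. It follows that $\|(u,v)-(x,y)\|_\infty<\varepsilon$, so this slice lies in $B((x,y),\varepsilon)$ and $(x,y)$ is a semi denting point.

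I expect no real obstacle: once Lemma~\ref{BDP lem inf} is available, both directions are short, and the converse is just the observation that a slice of the averaged functional forces both coordinates into their respective small slices.
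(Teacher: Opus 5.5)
Your proposal is correct and matches the paper's proof: the forward direction uses Lemma~\ref{BDP lem inf} to fit $S_1\times\{y_0\}$ (resp. $\{x_0\}\times S_2$) inside the small slice. The converse uses the averaged functional $z^*=(\tfrac12x^*,\tfrac12y^*)$ with a slice of half the width. The only cosmetic difference is that you take one common $\delta$ and slice parameter $\delta/2$, whereas the paper keeps separate $\alpha,\beta$ and chooses $\delta<\min\{\alpha/2,\beta/2\}$.
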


\begin{proof}
Let $(x, y)$ be a semi denting point of $B_{X \oplus_\infty Y}$. Also, let $\varepsilon >0$. Then, there exists a slice $S$ of $B_{X \oplus_\infty Y}$ such that 
$S \subset B((x, y), \varepsilon).$              
 Now, by Lemma $\ref{BDP lem inf}$, we can find a slice $S_1$ of $B_{X}$ and a point $y_0 \in B_{Y}$ such that 
 $S_1 \times \{y_0\} \subset S. $ Then $S_1\subset B(x, \varepsilon).$ Indeed, 
 let $\hat{x} \in S_1$. Then $(\hat{x}, y_0) \in S \subset B((x, y), \varepsilon )$. Thus, $\Vert \hat{x} - x \Vert \leqslant \max \{ \Vert \hat{x} - x \Vert, \Vert y_0 - y \Vert \} < \varepsilon.$ Hence, $\hat{x} \in B(x, \varepsilon)$. 
 %We are now only left to show that $x \in S_{X}$. Let us choose $x_{\varepsilon} \in S(B_{X}, x^*, \alpha) \cap S_{X}$, then $\Vert x_\varepsilon - x \Vert < \varepsilon. $ Now, $ \bigg \vert \Vert x_\varepsilon \Vert - \Vert x \Vert \bigg \vert < \Vert x_\varepsilon - x \Vert < \varepsilon$. This gives that $\vert 1 - \Vert x \Vert  \vert < \varepsilon$, implying further that $\Vert x \Vert > 1- \varepsilon$. Hence, $\Vert x \Vert =1$. Altogether, this concludes that 
 Consequently, $x$ is a semi denting point of $B_X$. One can similarly show the other assertion.
 
 Conversely, let $\varepsilon >0$. Let $x$ be a semi denting point of $B_X$ and $y$ be a semi denting point of $B_Y$. Then we can find slices $S(B_{X}, x^*, \alpha)$ of $B_{X}$ and $S(B_{Y}, y^*, \beta)$ of $B_{Y}$ such that 
 $ S(B_{X}, x^*, \alpha) \subset B(x, \varepsilon) ~ \text{and} ~  S(B_{Y}, y^*, \beta) \subset B(y, \varepsilon). 
 $
 Choose $0 < \delta < \min \{ \frac{\alpha}{2}, \frac{\beta}{2} \}$ and consider the slice $S(B_{X \oplus_\infty Y}, z^*, \delta)$ of $B_{X \oplus_\infty Y}$, where $z^*=(\frac{x^*}{2}, \frac{y^*}{2}).$ If $(\hat{x}, \hat{y}) \in S(B_{X \oplus_\infty Y}, z^*, \delta)$, then 
  \begin{align*}
 & \bigg (\frac{x^*}{2}, \frac{y^*}{2} \bigg ) (\hat{x}, \hat{y}) > 1 - \delta \\
 & \hspace{-0.3cm} \Rightarrow x^*(\hat{x})+ y^*(\hat{y}) > 2-2\delta \\
 & \hspace{-0.3cm} \Rightarrow x^*(\hat{x}) + 1 \geq x^*(\hat{x})+ y^*(\hat{y}) > 2-2\delta \\
 & \hspace{-0.3cm} \Rightarrow x^*(\hat{x}) > 1-2\delta > 1-\alpha .
  \end{align*}
  So, $\hat{x} \in S(B_{X}, x^*, \alpha)$. Similarly, $\hat{y} \in S(B_{Y}, y^*, \beta)$. Therefore, 
  \begin{align*}
  \Vert (\hat{x}, \hat{y}) - (x, y) \Vert_\infty & = \Vert (\hat{x} - x, \hat{y} - y) \Vert_\infty 
   = \max \{ \Vert \hat{x} - x \Vert, \Vert \hat{y} - y \Vert \} 
   < \varepsilon.
  \end{align*}
  %Hence, $(\hat{x}, \hat{y}) \in B((x,y), \varepsilon)$. 
  Thus, $S(B_{X \oplus_\infty Y}, z^*, \delta) \subset B((x, y), \varepsilon)$, which shows that $(x, y)$ is a semi denting point of $B_{X \oplus_\infty Y}$.
\end{proof}

%\begin{corollary}
 %   Let $(X_i)_i$ be a sequence of Banach spaces and $(x_i)_i \in S_{\oplus_{\infty}X_i}$ be a semi PC of $B_{\oplus_{\infty}X_i}$, then $x_i \in S_{X_i}$ is a semi PC of $B_{X_i}$, for all $i\in \mathbb{N}$.
%\end{corollary}

\begin{corollary}\label{inf sum semi dent coro}
    Let $\{X_i : i\in I\}$ be a family of Banach spaces and $X =\mathop{\bigoplus_\infty} X_i$. If $x=(x_i)_{i\in I}$ is a semi denting point of $B_{X}$, then $x_i$ is a semi denting point of $B_{X_i}$, for all $i$.
\end{corollary}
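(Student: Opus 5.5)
The plan is to reduce the statement to the two-summand case, exactly as Corollary \ref{inf1} was obtained from Theorem \ref{l1}. Fix an index $i_0 \in I$ and set $Z=\mathop{\bigoplus_\infty}\limits_{i\neq i_0}X_i$. The canonical map $(x_i)_{i\in I}\mapsto (x_{i_0},(x_i)_{i\neq i_0})$ is an isometric isomorphism of $X$ onto $X_{i_0}\oplus_\infty Z$. It carries $B_X$ onto $B_{X_{i_0}\oplus_\infty Z}$, and it carries slices to slices, since the dual map transports functionals. Hence $x$ is a semi denting point of $B_X$ exactly when $(x_{i_0},z)$, with $z=(x_i)_{i\neq i_0}$, is a semi denting point of $B_{X_{i_0}\oplus_\infty Z}$.

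The one point to check is that Theorem \ref{inf sum semi dent} is stated for $(x,y)\in S_{X\oplus_\infty Y}$. This holds, because $x\in S_X$ is automatic. Indeed, a semi denting point $x$ of $B_X$ must have norm one: every slice of $B_X$ contains points of norm arbitrarily close to $1$, so if some slice lies in $B(x,\varepsilon)$ then $\|x\|>1-2\varepsilon$, and letting $\varepsilon\to 0$ gives $\|x\|=1$. Therefore $(x_{i_0},z)\in S_{X_{i_0}\oplus_\infty Z}$.

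We then apply the forward direction of Theorem \ref{inf sum semi dent} to conclude that $x_{i_0}$ is a semi denting point of $B_{X_{i_0}}$. That direction of the proof uses only Lemma \ref{BDP lem inf}, which is valid for arbitrary Banach spaces $X_{i_0}$ and $Z$. Since $i_0\in I$ was arbitrary, the corollary follows.

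No step is a genuine obstacle here. The only care needed is the identification of $X$ with $X_{i_0}\oplus_\infty Z$ in the general, possibly uncountable, index setting. This is routine, since the sup-norm splits as $\max\{\|x_{i_0}\|,\sup_{i\neq i_0}\|x_i\|\}$.
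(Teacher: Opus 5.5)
Your proposal is correct and is essentially the paper's proof: the paper also writes $X=X_{i_0}\oplus_\infty Y$, with $Y$ the $\ell_\infty$-sum of the remaining $X_i$, and then invokes the forward direction of Theorem \ref{inf sum semi dent}. Your extra check that a semi denting point of $B_X$ automatically lies in $S_X$ (so $(x_{i_0},z)\in S_{X_{i_0}\oplus_\infty Z}$ as that theorem requires) is valid, and it fills in a detail the paper leaves implicit.
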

\begin{proof}
    For each $i_0\in I,$ we may write $X= X_{i_0} \oplus_{\infty} Y,$ where $Y= \mathop{\bigoplus_\infty}\limits_{i\neq i_0} X_i.$ 
    %    is $l_{\infty}$ sum of family $\{X_j : j\neq i\}.$ 
    Hence, the result follows from Theorem ~\ref{inf sum semi dent}.
\end{proof}

%\begin{remark}
%The converse of Corollary $\ref{inf sum semi dent coro}$ does not hold for infinite $I$. For example, although $\mathbb{R}$ has semi denting points, the space $l_{\infty}= \oplus_{\infty}\mathbb{R}$ has no such points as it satisfies strong diameter 2 property ( \cite[Proposition 3.5]{L2}). Similarly, when $K$ is a compact set without any isolated point, $B_{C(K)}$ does not have any semi denting point. 

    %Converse of Corollary $\ref{inf sum semi dent coro}$ is not true for infinite $I$. For example, $l_{\infty}= \oplus_{\infty}\mathbb{R}$ does not have any semi denting point (resp. semi PC, semi SCS point), though $\mathbb{R}$ has.
%\end{remark}

%%%%%%%%%%%%%%%%%%%%%%%%%%%%%%%%%
\subsection{Semi PC}
%%%%%%%%%%%%%%%%%%%%%%%%%%%%%%%%%%%%

\begin{lemma}\label{tech2}
  Let $1 <p <\infty$ and $q >0$ be such that $1/p + 1/q =1$.
  %and $0<\varepsilon<\frac{8\|x\|}{3}$.  
  Let $z=(x, y) \in S_{X \oplus_p Y}$ with $x \neq 0, y\neq 0$   be such that 
  %$x, y\neq 0,$ 
$$
\bigcap\limits_{i=-n}^n S(B_X, x_i^*,1-( x_i^*(x_0)-\del)) \subseteq B(\frac{x}{\|x\|},\frac{\e}{8}),
$$
%\cap S(B_X, -x_i^*, 1-(x_i^*(x_0)+\del)\right)
and
$$
\bigcap\limits_{i=-m}^m S(B_Y, y_i^*, 1-(y_i^*(y_0)-\del)) \subseteq B(\frac{y}{\|y\|},\frac{\e}{8}),
$$
%\cap S(B_Y, -y_i^*, 1-(y_i^*(y_0)+\del)\right)
 for some $x_{-n}^*, \ldots, x_n^*\in S_{X^*},$ $y_{-m}^*,\ldots, y_m^*\in S_{Y^*}$ and $\delta >0.$ Then $$\bigcap\limits_{i=-n}^{n} \bigcap\limits_{j=-m}^{m} S(B_{X \oplus_p Y}, z_{i,j}^*, 1-z_{i,j}^*(z_0)+\delta \|x\|^p)\bigcap (\|x\| S_X \times \|y\| S_Y) \subset B[z,\frac{\varepsilon}{2}],$$ where $z_0=(\|x\|x_0,\|y\|y_0)$ and $z_{i,j}^*=(\|x\|^{\frac{p}{q}}x_i^*,\|y\|^{\frac{p}{q}}y_j^*)$
 %, $x_i^*=sgn(i)x_{|i|}^*$, $y_j^*=sgn(j)y_{|j|}^*$ 
 for $-n\leq i\leq n, -m\leq j \leq m$.
 % $z_1^*=\|x\|^{\frac{p}{q}}x^*$, $z_2^*=\|y\|^{\frac{p}{q}}y^*$ and $z^*=(z_1^*,z_2^*)$.
\end{lemma}

\begin{proof}
    Let $w=(x_1,y_1) \in B_{X \oplus_p Y}$ be such that $\|x_1\|=\|x\|$ and $\|y_1\|=\|y\|.$
    
 Suppose that $\|z-w\|_p \geq \frac{\e}{2}$.

%Case 1: $\|x_1\|=\|x\|$ and $\|y_1\|=\|y\|$.

Let $a_1=\|x_1-x\|$, $a_2=\|y_1-y\|$, $b_1=\|x_1\|=\|x\|$, $b_2=\|y_1\|=\|y\|$.\\
Then, $a_1=\|x_1-x\| \leq \|x_1\|+\|x\|=2b_1$ and $a_2=\|y_1-y\| \leq \|y_1\|+\|y\|=2b_2$.\\
If $\|x_1-x\| \leq (\frac{\e}{8})\|x_1\|$ and $\|y_1-y\| \leq (\frac{\e}{8}) \|y_1\|$, then $\|z-w\|_p \leq \frac{\e}{8}$, which is a contradiction. Hence, $\|x_1-x\| >\frac{\e}{8}\|x_1\|$ or $\|y_1-y\| >\frac{\e}{8}\|y_1\|$. Without loss of generality, let $\|x_1-x\| >\frac{\e}{8}\|x_1\|$, i.e., $\frac{a_1}{b_1} >\frac{\e}{8}$.
Assume that $\|y_1-y\| \leq \frac{\e}{8}\|y_1\|$. i.e., $\frac{a_2}{b_2} \leq \frac{\e}{8}$.\\
%Since, $\|x_1-x\|^p + \|y_1-y\|^p \geq \e^p$, we find that $a_1^p=\|x_1-x\|^p \geq \e^p - (\frac{\e}{8})^p \geq (\frac{7\e}{8})^p$.\\
%Now, since, $\|x_1-x\|=a_1$, $\|x_1\|=b_1, \|y_1-y\|=a_2, \|x_2\|=b_2$, 
We claim that 
$x_{i'}^*(x_1) \leq (x_{i'}^*(x_0)-\del) b_1$ for some $-n \leq i' \leq n$. 
Indeed, if $x_i^*(x_1) >(x_i^*(x_0)-\del) b_1$ for all $i$, then $x_i^*(\frac{x_1}{\|x_1\|}) >x_i^*(x_0)-\del $, which further implies that $\|\frac{x_1}{\|x_1\|}-\frac{x}{\|x\|}\| <\frac{\e}{8} <\frac{a_1}{b_1}$. i.e., $\|x_1-x\| <\|x_1-x\|$, which leads to a contradiction. Hence, our claim follows.

%(Reason: If $x_i^*(x_1) >(x_i^*(x_0)-\del) b_1$ for all $i$, then $x_i^*(\frac{x_1}{\|x_1\|}) >x_i^*(x_0)-\del $, which implies $\|\frac{x_1}{\|x_1\|}-\frac{x}{\|x\|}\| <\frac{\e}{8} <\frac{a_1}{b_1}$, that is, $\|x_1-x\| <\|x_1-x\|$, a contradiction).
\noindent
Let us choose any $-m \leq j' \leq m$ such that $y_{j'}^*(y_1) \leq y_{j'}^*(\|y\|y_0)$ 
%(exists because either $y_{|j'|}^*(y_1) \leq y_{|j'|}^*(\|y\|y_0)$ or $-y_{|j'|}^*(y_1) \leq -y_{|j'|}^*(\|y\|y_0)$.
 Then, we have
\beqa
z_{i',j'}^*(w) & = &\|x\|^{\frac{p}{q}}x_{i'}^*(x_1) + \|y\|^{\frac{p}{q}}y_{j'}^*(y_1)\\
& \leq & \|x\|^{\frac{p}{q}}(x_{i'}^* (b_1 x_0)-\del b_1) + \|y\|^{\frac{p}{q}}y_{j'}^*(\|y\|y_0)\\
&=&\|x\|^{\frac{p}{q}}(x_{i'}^* (\|x\| x_0)-\del b_1) + \|y\|^{\frac{p}{q}}y_{j'}^*(\|y\|y_0)\\
&=&(\|x\|^{\frac{p}{q}}x_{i'}^*) (\|x\| x_0)-\del \|x\|^{\frac{p}{q}} b_1 + (\|y\|^{\frac{p}{q}}y_{j'})^*(\|y\|y_0)\\
& = & z_{i',j'}^*(z_0)-\del b_1^p.
\eeqa
So, $z^*(w) > z_{i,j}^*(z_0)-\del b_1^p$ for all $i,j$ implies that $\|w-z\|_p <\frac{\e}{2}$.
%In the case when $\|y_1-y\| > (\frac{\e}{4})\|y_1\|$, i.e., $\frac{a_2}{b_2} > \frac{\e}{4}$, same argument can be repeated.
\end{proof}

\begin{theorem} \label{thmpc} 
Let $1<p<\infty.$ Then $(x, y) $ is a semi PC of $B_{X \oplus_p Y}$ if and only if $\frac{x}{\Vert x \Vert}  $ is a semi PC of $B_X$ whenever $x \neq 0$ and $\frac{y}{\Vert y \Vert} \ $ is a semi PC  of $B_Y$ whenever $y \neq 0$.
\end{theorem}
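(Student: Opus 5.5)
We follow the scheme of Theorem \ref{thmlp}, replacing slices by relatively weakly open sets; basic ones are finite intersections of slices. Throughout, $q$ is the conjugate exponent of $p$.

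\emph{Necessity.} Let $(x,y)$ be a semi PC of $B_{X\oplus_p Y}$ and $x\neq 0$. Fix $0<\e<\|x\|$ and choose a basic weakly open set
\[
V=\bigcap_{k=1}^N S(B_{X\oplus_p Y},(x_k^*,y_k^*),\delta_k)\subset B((x,y),\e).
\]
The idea is to produce a nonempty relatively weakly open subset $U$ of $B_X$ with $U\subset B(x/\|x\|,2\e/\|x\|)$.

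To do this, pick $(\hat x,\hat y)$ in a smaller, concentric weakly open set such as $\bigcap_k S(\cdot,(x_k^*,y_k^*),\delta_k/4)$. Note that $\hat x\neq 0$, since $\|\hat x - x\|<\e<\|x\|$. Set
\[
U=\{u\in B_X:\ |x_k^*(\|\hat x\|u-\hat x)|<\eta \ \text{for all } k\},
\]
with $\eta$ small. This $U$ is a relatively weakly open subset of $B_X$ and contains $\hat x/\|\hat x\|$.

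For $u\in U$, the point $(\|\hat x\|u,\hat y)$ lies in $B_{X\oplus_p Y}$, because its norm equals $\|(\hat x,\hat y)\|\le 1$. Each functional value $x_k^*(\|\hat x\|u)+y_k^*(\hat y)$ differs from its value at $(\hat x,\hat y)$ by less than $\eta$, so $(\|\hat x\|u,\hat y)\in V$ once $\eta<\min_k 3\delta_k/4$. Hence $\|\|\hat x\|u-x\|<\e$. Then
\[
\|\|x\|u-x\|\le \|\|\hat x\|u-x\|+\bigl|\|\hat x\|-\|x\|\bigr|<2\e .
\]
This gives $\|u-x/\|x\|\|<2\e/\|x\|$. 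This step is actually easier than in the denting case, because weakly open sets allow us to fix the $Y$-coordinate.

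\emph{Sufficiency, Case 1: $y=0$.} Then $x\in S_X$ is a semi PC of $B_X$. Choose a basic weakly open set $W=\bigcap_k S(B_X,x_k^*,\delta_k)\subset B(x,\e)$. Pick $x_1\in W$; since $W$ is relatively weakly open, $W$ contains a neighbourhood of $x_1$ of the same basic form.

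Lift $W$ to $\bigcap_k S(B_{X\oplus_pY},(x_k^*,0),\delta_k)$, intersected with one extra slice $S(B_{X\oplus_pY},(x^{*},0),\alpha)$, where $x^*(x_1)>1-\alpha/2$. Any $(u,v)$ in this set satisfies $\|u\|>1-\alpha$, so $\|v\|\le(1-(1-\alpha)^p)^{1/p}<\e$. Also $u$ lies in the slices of $B_X$ (the slices for $(x_k^*,0)$ on the sum project to those for $x_k^*$ on $B_X$). Hence $(u,v)\in B((x,0),2\e)$.

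\emph{Sufficiency, Case 2: $x,y\neq 0$.} Here we follow Case 2 of Theorem \ref{thmlp}, with Lemma \ref{tech2} in place of Lemma \ref{tech1}.

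First, from the semi PC hypothesis, obtain symmetric families $x_{-n}^*,\dots,x_n^*$ and $y_{-m}^*,\dots,y_m^*$ as in Lemma \ref{tech2}. Here $x_0\in B_X$ and $y_0\in B_Y$ are points of the small weakly open sets, with the sets written as $\bigcap S(B_X,x_i^*,1-(x_i^*(x_0)-\delta))$. This normalization is possible by shrinking a basic neighbourhood around an interior point and adding $-x_i^*$ to the family.

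Next, consider the weakly open set
\[
\mathcal V=\bigcap_{i,j} S\bigl(B_{X\oplus_pY},z_{i,j}^*,1-z_{i,j}^*(z_0)+\gamma\bigr)\cap S\bigl(B_{X\oplus_pY},z^*,\Delta_p(\alpha/2)\bigr),
\]
where $z^*=(\|x\|^{p/q}x^*,\|y\|^{p/q}y^*)$ and $x^*,y^*$ nearly norm $x_0,y_0$. Actually it is cleaner to take $x^*\in S_{X^*}$ norming $x/\|x\|$ approximately, with one of the $x_i^*$ chosen this way.

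For $w\in\mathcal V$, the modulus of convexity of $\ell_p^2$ shows, exactly as in Theorem \ref{thmlp}, that $(\|w_1\|,\|w_2\|)$ is $\alpha$-close to $(\|x\|,\|y\|)$. Radially renormalizing gives $c$ with $\|c_1\|=\|x\|$, $\|c_2\|=\|y\|$ and $\|w-c\|_p<\alpha$. Then $c$ still lies in the sets of Lemma \ref{tech2} with the parameter $\delta\|x\|^p$, provided $\gamma+\alpha<\delta\min(\|x\|,\|y\|)^p$. Lemma \ref{tech2} gives $\|c-z\|_p\le\e/2$, and hence $\|w-z\|_p<\e$.

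\emph{Main obstacle.} The main difficulty is making $\mathcal V$ nonempty. In the denting case this was automatic, since slices are never empty. Here we must check that a common point exists: I would take the point $(\|x\|x_0',\|y\|y_0')$, with $x_0',y_0'$ chosen in the interiors of the small sets. For this point to lie in the $z^*$-slice, one needs $x_0,y_0$ to be of norm close to $1$ and nearly normed by $x^*,y^*$. I would secure this by first intersecting the small weakly open set in $B_X$ with a slice near the sphere, and choosing $x_0$ inside that intersection.
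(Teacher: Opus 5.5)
Your plan is the paper's proof: necessity by perturbing only the $X$-coordinate, Case 1 by lifting slices through $(x_k^*,0)$, and Case 2 via Lemma \ref{tech2}, the modulus of convexity of $\ell_p^2$ and radial renormalization. Your threshold $\delta\min(\|x\|,\|y\|)^p$ is a real improvement. The ``without loss of generality'' in the proof of Lemma \ref{tech2} yields only $\delta\|y\|^p$ when the $Y$-coordinate is the far one, so the paper's choice of $\alpha$ is justified only when $\|x\|\le\|y\|$.

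The gap is the step you flag yourself, nonemptiness of $\mathcal V$, and your remedy does not work because of the order of choices. The $z^*$-slice has depth $\Delta_p(\alpha/2)$, with $\Delta_p(\alpha/2)+\alpha+\gamma<\delta\min(\|x\|,\|y\|)^p$. Here $\delta$, the radius of the basic neighbourhood of $x_0$, is fixed only after $x_0$ is chosen. Take the variant where $x^*$ belongs to the symmetric family. For a witness $(\|x\|x_0',\|y\|y_0')$, the $-x^*$ constraints in $\mathcal V$ give $x^*(x_0')<x^*(x_0)+\gamma/\|x\|^p$. The $z^*$-slice gives $x^*(x_0')>1-\Delta_p(\alpha/2)/\|x\|^p$. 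So nonemptiness forces $1-x^*(x_0)<\delta$. Intersecting beforehand with a slice $S(B_X,x^*,\beta)$ only yields $1-x^*(x_0)<\beta$, and $\beta$ was chosen before $\delta$ and cannot be tuned to it. If instead $x^*$ merely nearly norms $x/\|x\|$ and is fixed in advance, nothing ties it to the $\gamma$-neighbourhood of $x_0$ at the required precision.

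The missing idea is the paper's normalization:
\begin{itemize}
\item Choose $x_0\in S_X$ and $y_0\in S_Y$ in the small weakly open sets \emph{first}.
\item Then take $x^*,y^*$ with $x^*(x_0)=1=y^*(y_0)$, and adjoin $\pm x^*$, $\pm y^*$ to the families before fixing $\delta$; this only shrinks the neighbourhoods.
\item Now $z_0=(\|x\|x_0,\|y\|y_0)$ lies in $B_{X\oplus_pY}$, and $z^*(z_0)=\|x\|^p+\|y\|^p=1$ with $z^*=z^*_{i_0,j_0}$ one of the $z^*_{i,j}$.
\item Hence $z_0\in\mathcal V$ for every $\alpha,\gamma$. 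Taking $\gamma=\Delta_p(\alpha/2)$ makes the extra slice redundant, which is exactly the paper's set.
\end{itemize}
Alternatively, choose $x^*,y^*$ last: once $\alpha,\gamma$ are fixed, pick $x_0'\in S_X$ in the $\gamma$-neighbourhood of $x_0$, let $x^*$ norm it, and do the same for $y$.

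Points of the sphere in the small sets exist because a nonempty relatively weakly open subset of $B_X$ always meets $S_X$ when $\dim X=\infty$. For $\dim X<\infty$ you can take $B_X\cap B(x/\|x\|,\varepsilon/8)$ itself. The same observation is needed in your Case 1: choosing $x_1\in W$ with $x^*(x_1)>1-\alpha/2$ presupposes $\|x_1\|>1-\alpha/2$, although $\alpha$ is dictated by $\varepsilon$, not by $W$.

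Finally, in the necessity part $\bigcap_k S(\cdot,(x_k^*,y_k^*),\delta_k/4)$ can be empty even when $V$ is not. Take any $(\hat x,\hat y)\in V$ and let $\eta$ be smaller than its slack in each of the $N$ slices.
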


\begin{proof}
Let $q >0$ be such that $1/p + 1/q = 1$. Let $(x,y)$ be a semi PC of $B_{X \oplus_p Y}$. We will prove that $\frac{x}{\Vert x \Vert}  $ is a semi PC of $B_X$ whenever $x \neq 0$. The other case follows in a similar way. So, let $x\neq 0.$ Let $ 0 < \varepsilon < \Vert x \Vert$. Then there exists a weakly open set $V$ of $B_{X \oplus_p Y}$ such that $V \subseteq B((x, y), \varepsilon).$ Let us choose $\hat{z}=(\hat{x}, \hat{y}) \in V \cap S_{X \oplus_p Y}$. Now, we can find a basic weakly open subset 
$$
\tilde{V} = \{ z \in B_{X \oplus_p Y}: \vert z_i^*(z-\hat{z}) \vert < 1, \ 1 \leq i \leq n \},
$$ 
of $B_{X \oplus_p Y}$ such that $\tilde{V} \subset V$, where $z_i^* = (x_i^*, y_i^*) \in X^* \oplus_q Y^*$ where $q$ satisfies $1/p + 1/q = 1$. We claim that $\hat{x}\neq 0.$ 
Suppose, to the contrary, that $\hat{x}= 0$ so that $\hat{y} \in S_{Y}.$ Let us choose $y_0 \in \{ y \in B_{Y} : \vert y_i^*(y-\hat{y})\vert <1, \ i=1,2,...n\}.$
Then $(0, y_0) \in \tilde{V} \subset V \subset B((x, y), \varepsilon)$.
So, $$
\Vert x \Vert^p \leq \Vert x \Vert^p + \Vert y_0 - y \Vert^p = \|(0, y_0)- (x,y)\|_p^p< \varepsilon^p,
$$ 
which leads to a contradiction. Hence, our claim follows. 

Let us now consider the weakly open set $$V_1=\{x_0 \in B_{X} : \big| x_i^* \bigg (x_0-\frac{\hat{x}}{\Vert \hat{x} \Vert} \bigg ) \big| < \frac{1}{2\Vert \hat{x} \Vert}, \ i=1,2,..n\},$$ 
of $B_X$. 
Let $x_0\in V_1.$ Choose $y_0\in B_Y$ such that $\big| y_i^* \bigg (\Vert \hat{y} \Vert y_0-\hat{y} \bigg ) \big| < \frac{1}{2}$ for all $i=1,2,\ldots,n.$ Then we can obtain that 
$(\Vert \hat{x} \Vert x_0, \Vert \hat{y} \Vert y_0)\in \tilde{V} \subset V \subset B((x, y), \varepsilon).$ Thus, $\Vert \Vert \hat{x} \Vert x_0 - x \Vert < \varepsilon$.
Consequently,
\begin{align*}
 \Vert \Vert x \Vert x_0 - x \Vert & \leq \Vert \Vert x \Vert x_0 - \Vert \hat{x} \Vert x_0 \Vert + \Vert \Vert \hat{x} \Vert x_0 - x \Vert \\
& < \Vert x_0 \Vert \vert \Vert x \Vert - \Vert \hat{x} \Vert \vert + \varepsilon \\
& < \varepsilon + \varepsilon = 2 \varepsilon.
\end{align*}
Finally, 
$$
\bigg \Vert x_0 - \frac{x}{\Vert x \Vert} \bigg \Vert = \frac{\Vert \Vert x \Vert x_0 - x \Vert}{\Vert x  \Vert} < \frac{2 \varepsilon}{\Vert x \Vert}.
$$
Hence, $V_1 \subset B \bigg (\frac{x}{\Vert x \Vert}, \frac{2 \varepsilon}{\Vert x \Vert} \bigg ).$

Conversely, suppose that $\frac{x}{\Vert x \Vert}  $ is a semi PC of $B_X$ whenever $x \neq 0$ and $\frac{y}{\Vert y \Vert} \ $ is a semi PC  of $B_Y$ whenever $y \neq 0$.

Case 1: Either $x=0$ or $y=0$.

Without loss of generality, let $y=0.$ Then $x\neq 0$ and hence a semi PC of $B_X.$ Let $\varepsilon >0.$
%$y=0$ and $x$ is a semi PC of $B_X$. 
Thus, there exists a weakly open set $U$ of $B_{X}$ such that $U \subset B(x, \varepsilon)$. 
%Now, $U$ must contain a nonempty finite intersection of slices, say $S(B_{X}, x_{i}^*, \alpha_i), ~ 1 \leq i \leq n$ 
Then, we can find slices $S(B_{X}, x_{i}^*, \alpha_i), ~ 1 \leq i \leq n$ of $B_X$ such that $\bigcap\limits_{i=1}^n S(B_{X}, x_{i}^*, \alpha_i) \subset U$. 
Choose $x_0\in S(B_X, x_i^*, \alpha_i).$ Let $x_0^*\in S_{X^*}$ be such that $x_0^*(x_0)=1$, and consider $\bigcap\limits_{i=0}^{n} S(B_X, x_i^*, \alpha_i),$ where $\alpha_0>0$ with $(1-(1-\alpha_0)^p)^{1/p}< \varepsilon.$ Define $z_i^*=(x_i^*,0)$ for each $i=0,\ldots,n.$ Then $$(x_0,0)\in \bigcap\limits_{i=0}^{n}S(B_{X\oplus_p Y}, z_i^*, \alpha_i)\subset \bigcap\limits_{i=0}^{n} S(B_X, x_i^*, \alpha_i) \times \varepsilon B_Y\subset B((x,0), 2\varepsilon).$$

%By \cite[Lemma 2.3]{BS1}, we get a slice $S(B_{X \oplus_p Y}, z_i^*, \mu_i)$ of $B_{X \oplus_p Y}$ such that  $$  S(B_{X \oplus_p Y}, z_i^*, \mu_i) \subset S(B_{X}, x_{i}^*, \alpha_i) \times \varepsilon B_{Y}.$$ Let us now chose $0 < \mu < \min \{ \mu_1, \cdots, \mu_n \}$. Then,  \begin{align*} \cap_{i=1}^n S(B_{X \oplus_p Y}, z_i^*, \mu) & \subset \cap_{i=1}^n [ S(B_{X}, x_{i}^*, \alpha_i) \times \varepsilon B_{Y}] \\& \subset U \times \varepsilon B_{Y} \\ & \subset B(x, \varepsilon) \times \varepsilon B_{Y} \\& \subset B((x, 0), \varepsilon). \end{align*}Thus, $\cap_{i=1}^n S(B_{X \oplus_p Y}, z_i^*, \mu) \subset B((x, 0), \varepsilon),$ which shows that $(x, 0)$ is a semi PC point $B_{X \oplus_p Y}$.

%Similarly, the case when $x=0$ can be proved analogously.

Case 2: Both $x \neq 0$ and $y \neq 0$.
%Both $x$ and $y$ are nonzero.
 
Then $\frac{x}{\|x\|} $ is a semi PC of $B_X$ and $\frac{y}{\|y\|} $ is a semi PC of $B_Y$. Let $\e >0$. 
Choose $\del >0$, $x_0 \in S_X$, $y_0 \in S_Y$, $x_1^*,\ldots, x_n^*\in S_{X^*}$ and $y_1^*,\ldots, y_m^*\in S_{Y^*}$ 
%$\{x_i^*\}_{i=1}^n \subseteq S_{X^*}$ and $\{y_i^*\}_{i=1}^m \subseteq S_{Y^*}$ 
such that
\[
\bigcap_{i=1}^n \left(S(B_X, x_i^*,1-( x_i^*(x_0)-\del)) \cap S(B_X, -x_i^*, 1-(x_i^*(x_0)+\del)\right)\subseteq B(\frac{x}{\|x\|},\frac{\e}{8}),
\]
and
\[
\bigcap_{i=1}^m \left(S(B_Y, y_i^*, 1-(y_i^*(y_0)-\del)) \cap S(B_Y, -y_i^*, 1-(y_i^*(y_0)+\del)\right)\subseteq B(\frac{y}{\|y\|},\frac{\e}{8}).
\]
We may assume that there exists $1 \leq i_0 \leq n$ and $1 \leq j_0 \leq m$ such that $x_{i_0}^*(x_0)=1$ and $y_{j_0}^*(y_0)=1$.\\
For $-n \leq i \leq n$ and $-m \leq j \leq m$, let $x_i^*=sgn(i)x_{|i|}^*$ and $y_j^*=sgn(j)y_{|j|}^*$.\\
Let $z_{i,j}^*=(\|x\|^{\frac{p}{q}}x_i^*,\|y\|^{\frac{p}{q}}y_j^*)$ and $z_0=(\|x\|x_0,\|y\|y_0)$. Clearly, $z_{i,j}^* \in S_{Z^*}$, $z_0 \in B_Z$ and so $z_{i_0,j_0}^*(z_0)=1$. Also let $z=(x,y)$. 
Then by Lemma ~\ref{tech2}, $$\bigcap\limits_{i=-n}^{n} \bigcap\limits_{j=-m}^{m} S(B_{X \oplus_p Y}, z_{i,j}^*, 1-z_{i,j}^*(z_0)+\delta \|x\|^p)\bigcap (\|x\| S_X \times \|y\| S_Y) \subset B[z,\frac{\varepsilon}{2}].$$

Since $\ell_p^2$ is uniformly convex, let $\Delta_p(\e)$ be the modulus of convexity. Choose $\alpha$ such that $0 <\alpha <\frac{\e}{2}$ and $\Delta_p(\frac{\alpha}{2})+\alpha <\del \|x\|^p$. \\    
Let $w=(x_1,x_2) \in B_{X\oplus_p Y}$ be such that $z_{i,j}^*(w) > z_{i,j}^*(z_0) - \Del_p(\frac{\alpha}{2})$, for all $-n \leq i \leq n$ and $-m \leq j \leq m$. Then,
\beqa
 \|x\|^{\frac{p}{q}} \|x_1\| + \|y\|^{\frac{p}{q}}\|x_2\| & > &\|x\|^{\frac{p}{q}}x_{i_0,j_0}^*(x_1)+\|y\|^{\frac{p}{q}}y_{i_0,j_0}^*(y_1)\\
 & > & z_{i_0,j_0}^*(z_0)-\Del_p(\frac{\alpha}{2})\\
 &=& 1 - \Del_p(\frac{\alpha}{2}).
\eeqa
Let $\tilde{x} := (\|x\|)$ and $\tilde{y}:=(\|y\|)$. We have, $\tilde{z}=(\tilde{x}, \tilde{y}) \in S_{\ell_p^2}$, $\tilde{w} := (\|x_1\|,\|x_2\|) \in B_{\ell_p^2}$ and $u := (\|x\|^{\frac{p}{q}},\|y\|^{\frac{p}{q}}) \in S_{\ell_q^2}$. Then,
\[
\tilde{z}(u) = \|x\| \|x\|^{\frac{p}{q}} + \|y\| \|y\|^{\frac{p}{q}}= 1 \quad \mbox{and} \quad \tilde{w}(u) > 1 - \Del_p(\frac{\alpha}{2}).
\]
%Here, we may consider $u$ acting as a functional on $\ell_p^2$. 
Also, $\tilde{w}(u) > 1 - \Del_p(\frac{\alpha}{2})$, which implies that $\frac{\tilde{z}(u)+\tilde{w}(u)}{2} >1-\Del_p(\frac{\alpha}{2})$. i.e., $\frac{\|\tilde{z}+\tilde{w}\|}{2} >1-\Del_p(\frac{\alpha}{2})$. Now, by the definition of the modulus of convexity, we have
\[
\|\tilde{z}-\tilde{w}\|_p = \left( \bigg|\|x_1\| - \|x\|\bigg|^p + \bigg|\|x_2\| - \|y\|\bigg|^p\right)^{\frac{1}{p}}< \alpha.
\]
Let $c=(c_1,c_2)$, where $c_1 = \frac{x_1\|x\|}{\|x_1\|}$ and $c_2=\frac{x_2\|x\|}{\|x_2\|}$. Clearly, $\|c_1\|=\|x\|$ and $\|c_2\|=\|y\|$. Hence,
\[
\|w-c\|_p= \left(\bigg|\|x_1\| - \|x\|\bigg|^p +  \bigg|\|x_2\| - \|y\|\bigg|^p\right)^{\frac{1}{p}}< \alpha.
\]
Since for any $i,j$, \ $z_{i,j}^*(w) >z_{i,j}^*(z_0)-\Del_p(\frac{\alpha}{2})$, we have $z_{i,j}^*(c) \geq z_{i,j}^*(w) -\|w-c\|_p > z_{i,j}^*(z_0)-\Del_p(\frac{\alpha}{2}) - \alpha >z_{i,j}^*(z_0) - \del \|x\|^p$.
%by the choice of $\alpha$. By the first case,
Thus,
\[
\|z-c\|_p <\frac{\e}{2},
\]
and finally,
\[
\|z-w\|_p \leq \|z-c\|_p + \|c-w\|_p <\e.
\]
\end{proof}

\begin{theorem} \label{thmPClp}
Let $\{X_i : i\in I\}$ be a family of Banach spaces and
$X = \bigoplus_{p}X_{i}, ~ 1 < p < \infty .$
Also let $x=(x_i)_{i=1}^\infty \in S_{X}$. Then $x$ is a semi PC of $B_{X}$ if and only if $\frac{x_i}{\|x_i\|}$ is a semi PC of $B_{X_i}$ whenever $x_i \neq 0$.  
\end{theorem}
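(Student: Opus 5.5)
The plan is to mirror the proof of the analogous semi denting result for countable $\ell_p$-sums, replacing slices by weakly open sets and Theorem~\ref{thmlp} by Theorem~\ref{thmpc}.

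For the forward direction, fix $i_0$ with $x_{i_0}\neq 0$ and write $X = X_{i_0}\oplus_p Z$ with $Z=\mathop{\bigoplus_p}\limits_{i\neq i_0}X_i$. Then $x=(x_{i_0},z)$, and Theorem~\ref{thmpc} gives directly that $\frac{x_{i_0}}{\|x_{i_0}\|}$ is a semi PC of $B_{X_{i_0}}$.

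For the converse, fix $\varepsilon>0$ and choose a finite set $F\subset I$ with $\sum_{i\notin F}\|x_i\|^p<(\varepsilon/8)^p$. Only countably many $x_i$ are nonzero, so $F$ exists. Put $W=\mathop{\bigoplus_p}\limits_{i\in F}X_i$, $w=(x_i)_{i\in F}$ and $\tilde w = w/\|w\|_p$. Theorem~\ref{thmpc} extends by induction to finite $\ell_p$-sums, since a finite sum splits as $X_{i_1}\oplus_p(\text{rest})$ and normalized components are unchanged by scaling. Hence $\tilde w$ is a semi PC of $B_W$, and there is a basic weakly open set
\[
U=\{v\in B_W : |w_k^*(v-v_0)|<\eta,\ 1\le k\le n\}\subseteq B(\tilde w,\varepsilon/8).
\]
We may take $v_0\in S_W$ and include a norming functional $w_0^*\in S_{W^*}$ with $w_0^*(v_0)=1$ among the $w_k^*$.

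Next, lift $U$ to $X$. Extend each $w_k^*$ by zero outside $F$ to get $z_k^*\in X^*$, and let
\[
V=\{y\in B_X: |z_k^*(y-(v_0,0))|<\eta',\ 1\le k\le n\},
\]
with $\eta'\le\eta$ small. Since $(v_0,0)\in V$, the set $V$ is a nonempty relatively weakly open subset of $B_X$. For $y\in V$ write $v=(y_i)_{i\in F}$. Then $v\in U$, so $\|v-\tilde w\|_p<\varepsilon/8$. Also $w_0^*(v)>1-\eta'$ forces $\|v\|_p>1-\eta'$, and hence $\sum_{i\notin F}\|y_i\|^p<1-(1-\eta')^p<(\varepsilon/8)^p$ once $\eta'$ is small. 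Now estimate as in the semi denting case:
\[
\|y-x\|_p\le \|v-\tilde w\|_p+\|\tilde w-w\|_p+\Big(\sum_{i\notin F}\|y_i\|^p\Big)^{1/p}+\Big(\sum_{i\notin F}\|x_i\|^p\Big)^{1/p}.
\]
Here $\|\tilde w - w\|_p = 1-\|w\|_p<\varepsilon/8$ follows from the choice of $F$. All four terms are below $\varepsilon/8$, so $\|y-x\|_p<\varepsilon/2$ and $V\subseteq B(x,\varepsilon)$.

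The main obstacle is controlling the tail mass $\sum_{i\notin F}\|y_i\|^p$ on a weakly open set, since weak neighbourhoods alone do not bound norms. This is exactly why a norming functional $w_0^*$ must be built into $U$: it pins $\|v\|_p$ near $1$, which forces the tail to be small. The remaining work is checking that the induction of Theorem~\ref{thmpc} to finitely many summands is legitimate, in particular that zero components are harmless there (they are, by Case~1 of Theorem~\ref{thmpc}).
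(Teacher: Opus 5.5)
Your proof is correct and follows essentially the same route as the paper. The forward direction splits off one summand and invokes Theorem~\ref{thmpc}; the converse truncates to a finite sum, applies the finite-sum extension of Theorem~\ref{thmpc}, extends the functionals by zero, and uses a norming functional at the base point (the paper's $w_{j_0}^*(w_0)=1$) to control the tail before the same four-term estimate. Your version with a basic weak neighbourhood and a finite index set $F$ is slightly cleaner than the paper's: it handles arbitrary $I$ and avoids the paper's awkward threshold $(z_j^*(z_0)-\delta^p)^{1/p}$.
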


\begin{proof}
Let $x$ be semi PC of $B_X$ and $i_0 \in \N$ be such that $x_{i_0} \neq 0$. Let $\e >0$ be given.
%Let $(x_i)_{i=1}^\infty \in S_{X_p}$ is a semi PC of $B_{X_p}$. 
We may identify $X$ as $X_{i_0}\bigoplus_p Y$ where $Y=\mathop{\bigoplus_p}\limits_{i\neq i_0} X_i$. So, by Theorem $\ref{thmpc}$, we have $\frac{x_{i_0}}{\|x_{i_0}\|}$ is a semi PC in $B_{X_{i_0}}$.

Conversely, suppose that $\frac{x_i}{\|x_i\|}$ is a semi PC of $B_{X_i}$ for all $i$ such that $x_i \neq 0$ and $\e >0.$  
%Since $(x_i)_{i=1}^\infty \in S(X_p)$, $\sum_{i=1}^\infty \|x_i\|^p = 1$. 
Choose $n_0 \in \N$ such that $\sum\limits_{i >n_0}\|x_i\|^p < (\frac{\e}{8})^p$. So, $\sum\limits_{i \leq n_0}\|x_i\|^p >1-(\frac{\e}{8})^p >(1-\frac{\e}{8})^p$.

Let $W:=\bigoplus_{i=1}^{n_0} X_i$. Let $z=(x_i)_{i=1}^\infty$, $w=(x_i)_{i=1}^{n_0}$ and $\tilde{w}=\frac{w}{\|w\|_p}$. By extending Theorem ~\ref{thmpc} to finitely many case, there exist $0< \del <\frac{\e}{8}$ and $\{w_i^*\}_{1 \leq i \leq k} \subseteq S_{W^*}$ and $w_0\in B(W)$ such that
\[
\bigcap_{1 \leq i \leq k} S(B_W, w_i^*, 1-w_i^*(w_0)-\del) \subseteq B(\tilde{w},\frac{\e}{8}).
\]
Just as in Theorem ~\ref{thmpc}, we may assume that for some $1 \leq j_0 \leq k$, $w_{j_0}^*(w_0)=1$.\\
For $1 \leq j \leq k$, let $z_j^*$ be defined as 
\[
z_j^*(i) =
\begin{cases}
 w_j^{*}(i) &\text{if } i \leq n_0, \\
 0 &\text{otherwise}.
 \end{cases}
 \]
 Clearly $z_j^* \in S(X^*)$ for all $1 \leq j \leq k$. 
 Also, let \[
z_0(i) =
\begin{cases}
 w_0(i) &\text{if } i \leq n_0, \\
 0 &\text{otherwise}.
 \end{cases}
 \]
 Clearly $z_0 \in B_W$.\\
 Let $y:=(y_i)_{i=1}^\infty \in B_{X}$ and $v:=(y_i)_{i=1}^{n_0}$. Let $z_j^*(y) >(z_j^*(z_0)-\del^p)^{\frac{1}{p}}$ for all $1 \leq j \leq k$. So, $w_j^*(v) >(w_j^*(w_0)-\del^p)^{\frac{1}{p}}>w_j^*(w_0)-\del$ for all $1 \leq j \leq k$. So, $\|v-\tilde{w}\|_p <\frac{\e}{8}$.\\
  Also, $w_{j_0}^*(v) >(w_{j_0}^*(w_0)-\del^p)^{\frac{1}{p}}$, that is, $w_{j_0}^*(v) >(1-\del^p)^{\frac{1}{p}}$. So, $\|v\|_p^p >1-\del^p$, that is,  $\sum_{i \leq n_0}\|y_i\|^p >1-\del^p$. Therefore, $\sum_{i >n_0}\|y_i\|^p <\del^p$ which implies $\left(\sum_{i>n_0} \|y_i\|^p\right)^{\frac{1}{p}} <\del <\frac{\e}{8}$.\\
 So, 
 \begin{align*}
     \|w-v\|_p & \leq \|w-\tilde{w}\|_p + \|\tilde{w}-v\|_p \\
     & \leq \frac{\e}{8} + \frac{\e}{8} =\frac{\e}{4} .
 \end{align*}
 Finally, we have
\beqa
 \|y-z\|_p &=& \left(\sum_{i=1}^\infty\|y_i-x_i\|^p\right)^{\frac{1}{p}}\\
 &\leq &\left(\sum_{i \leq n_0}\|y_i-x_i\|^p\right)^{\frac{1}{p}} + \left(\sum_{i >n_0} \|y_i-x_i\|^p\right)^{\frac{1}{p}}\\
 &\leq& \|v-w\|_p +  \left(\sum_{i >n_0} \|y_i\|^p\right)^{\frac{1}{p}} + \left(\sum_{i >n_0} \|x_i\|^p\right)^{\frac{1}{p}}\\
 &\leq& \frac{\e}{4} + \frac{\e}{8} + \frac{\e}{8}\\
 &<& \e.
\eeqa
\end{proof}

%proceeding with similar technique as in Theorem \ref{thmpc} and \ref{thmPClp}, we have following.
Using a similar technique to that employed in Theorem \ref{thmpc} and Theorem \ref{thmPClp}, we obtain the following.
\begin{corollary}
 Let $\{X_i : i\in I\}$ be a family of Banach spaces and
$X = \mathop{\bigoplus_{1}}X_{i}$ and $x=(x_i)_{i=1}^\infty \in S_{X}$. If $x$ is a semi PC of $B_{X}$, then $\frac{x_i}{\|x_i\|}$ is a semi PC of $B_{X_i}$ whenever $x_i \neq 0$.   
\end{corollary}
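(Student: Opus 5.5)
Fix $i_0$ with $x_{i_0}\neq 0$ and write $X = X_{i_0}\oplus_1 Z$ with $Z=\mathop{\bigoplus_1}\limits_{i\neq i_0}X_i$, so that $x=(x_{i_0},z)$ with $\|x_{i_0}\|+\|z\|=1$. Then $x$ is a semi PC of $B_{X_{i_0}\oplus_1 Z}$, and it suffices to prove the two-summand statement: if $(x,y)\in S_{X\oplus_1 Y}$ is a semi PC of $B_{X\oplus_1 Y}$ and $x\neq 0$, then $\frac{x}{\|x\|}$ is a semi PC of $B_X$. I will copy the first half of the proof of Theorem \ref{thmpc}, replacing $\ell_p$-estimates by $\ell_1$-estimates.

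Let $0<\varepsilon<\|x\|$ and let $V\subset B((x,y),\varepsilon)$ be a weakly open subset of $B_{X\oplus_1 Y}$. Pick $\hat z=(\hat x,\hat y)\in V\cap S_{X\oplus_1 Y}$; this is possible because a nonempty relatively weakly open subset of the unit ball of an infinite-dimensional space meets the sphere, and the finite-dimensional case is handled by a small adjustment. Shrink $V$ to a basic set $\tilde V=\{w\in B_{X\oplus_1Y}: |z_k^*(w-\hat z)|<1,\ 1\le k\le n\}$ with $z_k^*=(x_k^*,y_k^*)\in X^*\oplus_\infty Y^*$.

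First I show $\hat x\neq 0$. If $\hat x=0$, then $\hat y\in S_Y$. Choose $y_0$ in the corresponding weak neighbourhood of $\hat y$ in $B_Y$. Then $(0,y_0)\in \tilde V$, so
\[
\varepsilon>\|(0,y_0)-(x,y)\|_1=\|x\|+\|y_0-y\|\ge \|x\|,
\]
a contradiction.

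Next define
\[
V_1=\Big\{x_0\in B_X: \big|x_k^*\big(x_0-\tfrac{\hat x}{\|\hat x\|}\big)\big|<\tfrac{1}{2\|\hat x\|},\ 1\le k\le n\Big\}.
\]
This is a nonempty weakly open subset of $B_X$. Handle the $Y$-coordinate as follows: if $\hat y\neq0$, take $y_0=\hat y/\|\hat y\|$; if $\hat y=0$, take $y_0=0$. In either case $\|\hat y\|y_0=\hat y$. For $x_0\in V_1$, the point $(\|\hat x\|x_0,\|\hat y\|y_0)$ lies in $B_{X\oplus_1Y}$, since its norm is at most $\|\hat x\|+\|\hat y\|=1$, and it lies in $\tilde V$. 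Hence
\[
\big\|\|\hat x\|x_0-x\big\|<\varepsilon.
\]
Since $\big|\|\hat x\|-\|x\|\big|\le\|\hat x-x\|<\varepsilon$, we get
\[
\big\|\|x\|x_0-x\big\|<2\varepsilon,\qquad\text{so}\qquad \Big\|x_0-\tfrac{x}{\|x\|}\Big\|<\tfrac{2\varepsilon}{\|x\|}.
\]
Thus $V_1\subset B\big(\tfrac{x}{\|x\|},\tfrac{2\varepsilon}{\|x\|}\big)$, and $\frac{x}{\|x\|}$ is a semi PC of $B_X$.

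The main obstacle is the choice of $\hat z$ on the sphere. It is needed so that rescaling by $\|\hat x\|$ and $\|\hat y\|$ keeps the point in the unit ball while reproducing $\hat z$'s functional values. If one only has $\hat z\in V$ with $\|\hat z\|<1$, the same argument still works provided $\|\hat x\|$ is bounded away from $0$. That bound follows as above, because any point of $V$ must have $X$-component of norm greater than $\|x\|-\varepsilon>0$.
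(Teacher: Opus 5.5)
Your proposal is correct and follows essentially the argument the paper intends. You split off $X_{i_0}$ as $X_{i_0}\oplus_1 Z$, as in Theorem~\ref{thmPClp}, and then rerun the necessity half of Theorem~\ref{thmpc} with $\ell_1$-estimates; your exact choice $\|\hat y\|y_0=\hat y$ is the natural $\ell_1$ replacement for the paper's approximate choice of $y_0$.

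As you note at the end, you can drop the sphere condition on $\hat z$, and with it the contradiction argument for $\hat x\neq 0$. Indeed, $\|(\|\hat x\|x_0,\hat y)\|_1\le\|\hat z\|_1\le 1$ holds for any $\hat z\in V$, and $\|\hat x\|>\|x\|-\varepsilon>0$ follows directly from $\hat z\in V\subset B((x,y),\varepsilon)$.
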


\begin{lemma}\label{BHP lem inf} 
\cite{ALN1}
     Let $W$ be a nonempty weakly open subset of $B_{X \oplus_\infty Y}$. Then there exist weakly open subsets $U$ and $V$ of $B_X$ and $B_Y$, respectively, such that $U \times V \subset W.$
\end{lemma}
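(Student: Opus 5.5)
We need to prove Lemma \ref{BHP lem inf}: every nonempty weakly open subset $W$ of $B_{X\oplus_\infty Y}$ contains a product $U\times V$ of nonempty relatively weakly open subsets of $B_X$ and $B_Y$. The plan is to pass to a basic weak neighbourhood and then use the fact that the weak topology of $X\oplus_\infty Y$ is the product of the weak topologies of $X$ and $Y$, together with $B_{X\oplus_\infty Y}=B_X\times B_Y$.

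First, pick a point $z_0=(x_0,y_0)\in W$. Since $W$ is relatively weakly open in $B_{X\oplus_\infty Y}$, there exist $z_1^*,\dots,z_n^*\in (X\oplus_\infty Y)^*=X^*\oplus_1 Y^*$ and $\eta>0$ such that
\[
\{z\in B_{X\oplus_\infty Y}: |z_i^*(z-z_0)|<\eta,\ 1\le i\le n\}\subset W .
\]
Write $z_i^*=(x_i^*,y_i^*)$ with $x_i^*\in X^*$, $y_i^*\in Y^*$, so that $z_i^*(u,v)=x_i^*(u)+y_i^*(v)$.

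Next, define
\[
U=\{u\in B_X: |x_i^*(u-x_0)|<\eta/2,\ 1\le i\le n\},\qquad V=\{v\in B_Y: |y_i^*(v-y_0)|<\eta/2,\ 1\le i\le n\}.
\]
These are relatively weakly open in $B_X$ and $B_Y$, and they are nonempty because $\|(x_0,y_0)\|_\infty\le 1$ gives $x_0\in B_X$ and $y_0\in B_Y$, so $x_0\in U$ and $y_0\in V$. For $(u,v)\in U\times V$, the sup-norm gives $(u,v)\in B_{X\oplus_\infty Y}$, and
\[
|z_i^*((u,v)-z_0)|\le |x_i^*(u-x_0)|+|y_i^*(v-y_0)|<\eta .
\]
Hence $U\times V\subset W$.

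There is no real obstacle. The only points needing care are that the dual of the $\infty$-sum splits coordinatewise as $X^*\oplus_1 Y^*$, and that the unit ball of the $\infty$-sum is exactly the product $B_X\times B_Y$. The latter is the property that fails for $1\le p<\infty$, and it is why this lemma is specific to $p=\infty$.
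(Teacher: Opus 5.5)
Your proof is correct and complete. The paper gives no argument of its own for this lemma; it simply quotes it from \cite{ALN1}. Your direct argument is the standard proof of this fact: you shrink to a basic relative weak neighbourhood of a point $z_0=(x_0,y_0)\in W$, split each functional as $z_i^*(u,v)=x_i^*(u)+y_i^*(v)$, halve $\eta$, and use $B_{X\oplus_\infty Y}=B_X\times B_Y$. It even gives slightly more than the statement, namely $z_0\in U\times V$ for any prescribed $z_0\in W$. Note also that only the algebraic splitting of the functionals is used, not the isometric identification of the dual with $X^*\oplus_1 Y^*$.
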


\begin{theorem}\label{inf sum semi pc}
%Let $X$ and $Y$ be Banach spaces. Then 
Let $(x, y) \in  S_{X \oplus_\infty Y}.$ Then $(x,y)$ is a semi PC of $B_{X \oplus_\infty Y}$ if and only if $x$ is a semi PC of $B_X$ and $y $ is a semi PC of $B_Y$.
\end{theorem}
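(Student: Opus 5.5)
We follow the pattern of the proof of Theorem \ref{inf sum semi dent}, with Lemma \ref{BHP lem inf} playing the role that Lemma \ref{BDP lem inf} played there.

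For the forward direction, fix $\varepsilon>0$ and let $W$ be a nonempty relatively weakly open subset of $B_{X\oplus_\infty Y}$ with $W\subset B((x,y),\varepsilon)$. By Lemma \ref{BHP lem inf} there are nonempty relatively weakly open sets $U\subset B_X$ and $V\subset B_Y$ with $U\times V\subset W$. Fix some $y_0\in V$. For any $\hat{x}\in U$ we have $(\hat{x},y_0)\in W$, so
\[
\|\hat{x}-x\|\le \max\{\|\hat{x}-x\|,\|y_0-y\|\}<\varepsilon .
\]
Hence $U\subset B(x,\varepsilon)$, and $x$ is a semi PC of $B_X$. Fixing instead a point $x_0\in U$ and letting $\hat{y}$ range over $V$ gives $V\subset B(y,\varepsilon)$, so $y$ is a semi PC of $B_Y$. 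The only point to check is that Lemma \ref{BHP lem inf} yields \emph{nonempty} $U$ and $V$; this is implicit in the lemma, since otherwise it would be vacuous.

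For the converse, fix $\varepsilon>0$. Choose nonempty relatively weakly open sets $U\subset B_X$ and $V\subset B_Y$ with $U\subset B(x,\varepsilon)$ and $V\subset B(y,\varepsilon)$. Shrinking if necessary, we may take them basic:
\[
U=\{u\in B_X: |x_i^*(u-u_0)|<\alpha,\ 1\le i\le n\},\qquad V=\{v\in B_Y: |y_j^*(v-v_0)|<\beta,\ 1\le j\le m\}.
\]
Now set
\[
W=\{(u,v)\in B_{X\oplus_\infty Y}: |(x_i^*,0)((u,v)-(u_0,v_0))|<\alpha,\ |(0,y_j^*)((u,v)-(u_0,v_0))|<\beta\ \text{for all } i,j\}.
\]
The functionals $(x_i^*,0)$ and $(0,y_j^*)$ lie in $(X\oplus_\infty Y)^*=X^*\oplus_1 Y^*$, so $W$ is relatively weakly open in $B_{X\oplus_\infty Y}$. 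It contains $(u_0,v_0)$, so it is nonempty.

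Since $B_{X\oplus_\infty Y}=B_X\times B_Y$, we get $W=U\times V$. Therefore every $(u,v)\in W$ satisfies
\[
\|(u,v)-(x,y)\|_\infty=\max\{\|u-x\|,\|v-y\|\}<\varepsilon ,
\]
so $W\subset B((x,y),\varepsilon)$. This shows $(x,y)$ is a semi PC of $B_{X\oplus_\infty Y}$.

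Both directions are routine, and the product structure of the $\ell_\infty$ unit ball does all the work. Unlike the semi denting case, the converse needs no averaging of functionals: weakly open sets are already closed under this product construction.
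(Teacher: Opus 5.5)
Your proof is correct and follows the same route as the paper: the forward direction uses Lemma~\ref{BHP lem inf} exactly as the paper does, and the converse produces a relatively weakly open subset of $B_{X\oplus_\infty Y}$ inside $U\times V\subset B((x,y),\varepsilon)$. The only difference is that you justify the converse step directly, noting that $U\times V$ is itself relatively weakly open because the weak topology of $X\oplus_\infty Y$ is the product of the weak topologies; the paper instead obtains such a set by appealing to a proposition from the earlier Basu--Seal work.
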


\begin{proof}
Let $\varepsilon >0$ and $(x, y)$ be a semi PC of $B_{X \oplus_\infty Y}$. Then there exists a weakly open set $U$ of $B_{X \oplus_\infty Y}$ such that $U \subset B((x, y), \varepsilon)$. By Lemma $\ref{BHP lem inf}$, we can find weakly open subsets $U_1\subset B_X$ and $U_2\subset B_Y$ such that $U_1 \times U_2 \subset U$. Let us now consider $\hat{x} \in U_1$ and $\hat{y} \in U_2$. Then, $(\hat{x}, \hat{y}) \in U \subset B((x, y), \varepsilon)$. This implies that 
\begin{align*}
\max \{ \Vert \hat{x} -x \Vert , \Vert \hat{y} - y \Vert \}=   \Vert (\hat{x}, \hat{y}) - (x, y) \Vert < \varepsilon.
\end{align*}   
Thus $U_1 \subset B(x, \varepsilon)$ and $U_2 \subset B(y, \varepsilon)$, proving that $x $ is a semi PC of $B_X$ and $y $ is a semi PC of $B_Y$.

Conversely, let $x $ be a semi PC of $B_X$ and $y $ be a semi PC of $B_Y$. Also let $\varepsilon >0$. Then, there exists nonempty weakly open sets, say $U_1\subset B_X$ and $U_2\subset B_Y$ such that $U_1 \subset B(x, \varepsilon)$ and $U_2 \subset B(y, \varepsilon)$. 
Then proceeding similarly as in \cite[Proposition 2.11]{BS}, we get a weakly open set $ U \subset B_{X \oplus_\infty Y}$ such that $U \subset U_1 \oplus_\infty U_2.$
 %Since slices form a subbase for weakly open subsets of $B_X$ and $B_Y$, we can find slices $S(B_X, x_i^*, \alpha_i)$ of $B_X$ and $S(B_Y, y_i^*, \beta_i)$ of $B_Y$ such that $$\cap_{i=1}^n S(B_{X}, x_i^*, \alpha_i) \subset U_1, ~ \text{and} ~ \cap_{j=1}^m S(B_{Y}, y_j^*, \beta_j) \subset U_2.$$ Without loss of generality, let $n \geq m$. Then we get slices of $B_{X \oplus_\infty Y}$, say $S(B_{X \oplus_\infty Y}, z_i^*, \delta_i)$, where $z_i^*=(\frac{x_i^*}{2}, \frac{y_i^*}{2})$ and     $$ 0 < 2 \delta_i < \left\{ \begin{array}{rcl}		\min \{\alpha_i, \beta_i \} 		&\mbox{for} & i=1, \cdots, m \\	  \min \{\alpha_i, \beta_m \}		&\mbox{for} & i=m+1, \cdots, n,	\end{array} \right.	$$ satisfying $$ S(B_{X \oplus_\infty Y}, z_i^*, \delta_i) \subset S(B_{X}, x_i^*, \alpha_i) \oplus_\infty S(B_{Y}, y_i^*, \beta_i), ~ \forall i=1, \cdots, m, $$ and $$  S(B_{X \oplus_\infty Y}, z_i^*, \delta_i) \subset S(B_{X}, x_i^*, \alpha_i) \oplus_\infty S(B_{Y}, y_i^*, \beta_m), ~ \forall i=m+1, \cdots, n. $$ Then, we have  $$ S(B_{X \oplus_\infty Y}, z_i^*, \delta_i) \subset S(B_{X}, x_i^*, \alpha_i) \oplus_\infty S(B_{Y}, y_i^*, \beta_i). $$ This further implies that $$ \cap_{i=1}^n S(B_{X \oplus_\infty Y}, z_i^*, \delta_i) \subset \cap_{i=1}^n S(B_{X}, x_i^*, \alpha_i) \oplus_\infty \cap_{i=1}^m S(B_{Y}, y_i^*, \beta_i). $$
Therefore,
  \begin{align*}
U \subset U_1 \oplus_\infty U_2 \subset B(x, \varepsilon) \oplus_\infty B(y, \varepsilon)  \subset B((x, y), \varepsilon).
 \end{align*}
 \enlargethispage{\baselineskip}
 This proves that $(x, y) $ is a semi PC of $B_{X \oplus_\infty Y}$.
\end{proof}

%Proceeding similarly as  Corollary ~\ref{inf sum semi dent coro}, we have 
\begin{corollary}\label{inf sum PC coro}
    Let $\{X_i : i\in I\}$ be a family of Banach spaces and $X=\mathop{\bigoplus_\infty} X_i$. If $x=(x_i)_{i\in I}$ is a semi PC in $B_{X}$, then $x_i$ is a semi PC in $B_{X_i}$ for all $i\in I$.
\end{corollary}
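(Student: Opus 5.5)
The plan is to reduce the statement to the two-summand case, exactly as in Corollary~\ref{inf sum semi dent coro}, and then apply Theorem~\ref{inf sum semi pc}.

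Fix an arbitrary index $i_0\in I$ and put $Y=\mathop{\bigoplus_\infty}\limits_{i\neq i_0} X_i$. The map $x\mapsto (x_{i_0},(x_i)_{i\neq i_0})$ is a surjective linear isometry from $X$ onto $X_{i_0}\oplus_\infty Y$. This holds because $\sup_i\|x_i\| = \max\{\|x_{i_0}\|, \sup_{i\neq i_0}\|x_i\|\}$.

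Being a semi PC of the unit ball is invariant under surjective linear isometries. Such a map is a weak-to-weak homeomorphism, so it carries nonempty relatively weakly open subsets of one unit ball onto those of the other. It also preserves norm balls $B(\cdot,\varepsilon)$. Hence $(x_{i_0},(x_i)_{i\neq i_0})$ is a semi PC of $B_{X_{i_0}\oplus_\infty Y}$, and it lies in $S_{X_{i_0}\oplus_\infty Y}$ since $x\in S_X$.

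Now I would invoke only the forward direction of Theorem~\ref{inf sum semi pc}, which gives that $x_{i_0}$ is a semi PC of $B_{X_{i_0}}$. Since $i_0$ was arbitrary, this proves the corollary.

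The only point needing care is that Theorem~\ref{inf sum semi pc} is stated for points of the unit sphere, so $x\in S_X$ should be implicit in the hypothesis. I would note that this is harmless: if $\|x\|<1$, the ball $B(x,\varepsilon)$ with $\varepsilon<1-\|x\|$ contains no norm-one element. But every nonempty relatively weakly open subset of $B_X$ contains elements of norm one when $X$ is infinite dimensional. In finite dimensions the statement is trivial anyway, since every point is then a semi PC.

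In any case, the forward half of the proof of Theorem~\ref{inf sum semi pc} never uses $\|(x,y)\|=1$. It only uses Lemma~\ref{BHP lem inf} to find $U_1\times U_2\subset U$ and then projects onto the first coordinate. So I expect no real obstacle; the argument is a direct transcription of the proof of Corollary~\ref{inf sum semi dent coro}.
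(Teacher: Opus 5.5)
Your proposal is correct and is essentially the paper's proof: write $X=X_{i_0}\oplus_\infty Y$ with $Y=\mathop{\bigoplus_\infty}\limits_{i\neq i_0}X_i$ and apply the forward direction of Theorem~\ref{inf sum semi pc}. Your extra remarks are accurate and only make explicit what the paper leaves implicit. These are that semi PCs are preserved by surjective isometries, that semi PCs of $B_X$ automatically lie in $S_X$ when $X$ is infinite dimensional, and that the forward half of Theorem~\ref{inf sum semi pc} never uses the norm-one hypothesis.
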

\begin{proof}
    For each $i_0\in I,$ we may write $X= X_{i_0} \oplus_{\infty} Y,$ where $Y= \mathop{\bigoplus_\infty}\limits_{i\neq i_0} X_i.$ 
    % is $l_{\infty}$ sum of family $\{X_j : j\neq i\}.$ 
    Hence the result follows from Theorem ~\ref{inf sum semi pc}.  
\end{proof}

%\begin{remark}
%The converse of Corollary $\ref{inf sum PC coro}$ does not hold for infinite $I$.
 %   Since  $l_{\infty}= \oplus_{\infty}\mathbb{R}$ satisfies strong diameter 2 property, so the unit ball cannot have small weakly open sets, hence cannot have any point which is semi PC.
%\end{remark}

%The proof of the following theorem follows from Theorem ~\ref{thmPClp}. Hence, we skip it.{ \bf THIS IS UNPROFFESIONAL AND  LOUSY .PLEASE CHANGE}
%\begin{theorem} $x$ is a semi PC of $B_X$ if and only if $(x, 0)$ is a semi PC $B_{X\oplus_1 Y}$.
%\end{theorem}{ \bf PLEASE CHANGE}

%\begin{question}
%Let $X$ and $Y$ be Banach spaces and $(0,0) \neq (x,y) \in S_{X \oplus_p Y}, x \neq 0 ,y \neq 0, ~ 1<p<\infty$. Then, under what condition is $(x,y) \in $ semi dent $(B_{X \oplus_p Y})$ (resp. $(x,y) \in$ semi PC $(B_{X \oplus_p Y})$)?   
%\end{question}
%%%%%%%%%%%%%%%%%%%%%%%%%%%%%%%%%%%%%%%%%%%%%%
\subsection{\text{Semi} SCS points}
%%%%%%%%%%%%%%%%%%%%%%%%%%%%%%%%%%%%%%%%%%%%%%%%%%%%

\begin{theorem} \label{scsT}
Let $(x,y) \in S_{X \oplus_1 Y}$. Then $(x, y)$ is a semi SCS  point of $B_{X \oplus_1 Y}$ if and only if  $\frac{x}{\|x\|}$ is a semi SCS point of $B_X$  whenever $x\neq 0$ and  $\frac{y}{\|y\|}$ is a semi SCS point of  $B_Y$ whenever $y\neq 0$.
\end{theorem}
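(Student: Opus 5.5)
The plan is to exploit the fact that in $X\oplus_1 Y$ slices given by functionals supported on one coordinate are controlled, and that convex combinations of slices can redistribute mass between $X$ and $Y$. This is why, unlike Theorem~\ref{l1}, both coordinates may be nonzero here. Throughout, the dual is $X^*\oplus_\infty Y^*$.

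\emph{Converse direction.} Assume first that $x\neq 0\neq y$ and fix $\varepsilon>0$. Choose convex combinations of slices $\sum_i\lambda_i S(B_X,x_i^*,\alpha_i)\subset B(\frac{x}{\|x\|},\varepsilon)$ and $\sum_j\mu_j S(B_Y,y_j^*,\beta_j)\subset B(\frac{y}{\|y\|},\varepsilon)$, with $x_i^*\in S_{X^*}$ and $y_j^*\in S_{Y^*}$. The key observation is the inclusion
\[
S(B_{X\oplus_1Y},(x^*,0),\delta)\subset S(B_X,x^*,\delta)\times \delta B_Y .
\]
It holds because $x^*(u)>1-\delta$ forces $\|u\|>1-\delta$, hence $\|v\|<\delta$; the analogous inclusion holds for $(0,y^*)$. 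Take $0<\delta<\min\{\alpha_i,\beta_j,\varepsilon\}$ and consider
\[
\|x\|\sum_i\lambda_i S(B_{X\oplus_1Y},(x_i^*,0),\delta)+\|y\|\sum_j\mu_j S(B_{X\oplus_1Y},(0,y_j^*),\delta).
\]
This is a convex combination of slices, since $\|x\|+\|y\|=1$. Each of its elements has the form $(\|x\|d_x+\|y\|e, \|y\|d_y+\|x\|e')$ with $d_x$ in the $X$-combination, $d_y$ in the $Y$-combination, and $\|e\|,\|e'\|<\delta$. Its distance to $(x,y)$ is therefore at most $\|x\|\varepsilon+\|y\|\varepsilon+2\delta<3\varepsilon$. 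If instead $y=0$ (so $\|x\|=1$), the same argument applies using only the first sum.

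\emph{Forward direction.} Suppose $(x,y)$ is a semi SCS point with $x\neq 0$, and let $0<\varepsilon<\|x\|$. Take $\sum_i\lambda_i S(B_Z,(x_i^*,y_i^*),\delta_i)\subset B((x,y),\varepsilon)$, where $\max\{\|x_i^*\|,\|y_i^*\|\}=1$. Split the indices into $I_X=\{i:\|x_i^*\|>1-\delta_i/2\}$ and $I_Y=\{i:\|y_i^*\|>1-\delta_i/2\}$; every index lies in at least one of them. For each $i$, fix a weight $t_i\in[0,1]$: put $t_i=1$ on $I_X\setminus I_Y$, $t_i=0$ on $I_Y\setminus I_X$, and $t_i=\frac12$ on the intersection. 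Whenever $u\in S(B_X,\frac{x_i^*}{\|x_i^*\|},\delta_i/2)$ and $v\in S(B_Y,\frac{y_i^*}{\|y_i^*\|},\delta_i/2)$, the point $(t_iu,(1-t_i)v)$ lies in the $i$-th slice; the only case needing checking is a short estimate when $i\in I_X\cap I_Y$. Put $s=\sum_i\lambda_it_i$ and $\nu_i=\lambda_it_i/s$. The $X$-coordinates of such combinations then sweep out $sD$, where $D=\sum_{i}\nu_i S(B_X,\frac{x_i^*}{\|x_i^*\|},\delta_i/2)$ is a convex combination of slices of $B_X$. Similarly the $Y$-coordinates sweep out $(1-s)D'$. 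Since every such point lies within $\varepsilon$ of $(x,y)$ in the $\ell_1$-norm, we get $\|sw-x\|+\|(1-s)w'-y\|<\varepsilon$ for all $w\in D$ and $w'\in D'$.

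The main obstacle is controlling $s$ so that $D$ can be rescaled; this is done as follows. Using $\|w\|,\|w'\|\le 1$, we get $s>\|x\|-\varepsilon>0$ (so $s\neq 0$ and $D$ is well defined) and $1-s>\|y\|-\varepsilon$, which gives $|s-\|x\||<\varepsilon$. Then for $w\in D$,
\[
\Big\|w-\frac{x}{\|x\|}\Big\|\le\frac{\|sw-x\|+|s-\|x\||}{\|x\|}<\frac{2\varepsilon}{\|x\|},
\]
so $\frac{x}{\|x\|}$ is a semi SCS point of $B_X$. The case of $y$ is symmetric. Care is needed only to confirm that the chosen slices of $B_X$ are genuine, i.e.\ that the normalized functionals are defined ($\|x_i^*\|>0$ for $i\in I_X$), which the definition of $I_X$ guarantees.
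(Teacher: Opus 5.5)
Your proposal is correct and follows essentially the paper's route: in the forward direction your weight $s$ is the paper's $\lambda_Q$, and the estimate $|s-\|x\||<\varepsilon$ followed by rescaling is exactly its argument. The paper splits indices by the exact condition $\|x_i^*\|=1$, which is available because $\max\{\|x_i^*\|,\|y_i^*\|\}=1$; this makes your thresholds, the weight $\tfrac12$, and the tacit requirement $\delta_i<2$ unnecessary. Your converse is the paper's Case 1 combined with the convexity of the set of semi SCS points, just written out explicitly.
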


\begin{proof}
Let $(x,y)$ be a semi SCS point of $B_{X \oplus_1 Y}$. We will prove that $\frac{x}{\|x\|}$ is a semi SCS point of $B_X$ whenever $x\neq 0$. The other case follows similarly. So, let $x\neq 0$ and $0<\varepsilon<\|x\|.$   Then, there exists a convex combination of slices, say $S=\sum\limits_{i=1}^{n} \lambda_i S(B_{X \oplus_1 Y}, (x_i^*, y_i^*), \alpha_i)$ of $B_{X \oplus_1 Y}$ such that $S \subset B((x,y), \varepsilon),$ where $(x_i^*, y_i^*)\in S_{X^*\oplus_{\infty} Y^*}$ $\forall i= 1,\ldots, n$. Consider 
$$
Q=\{i\in \{1,\ldots, n\} : \|x_i^*\|=1\} ~ \text{and} ~ Q^c=\{1,\ldots, n\} \setminus Q.
$$
We now claim that $Q\neq \emptyset$ and $Q^c\neq \emptyset$. Assuming on the contrary, let either of $Q$ and $Q^c$ be the empty set. Without loss of generality, let $Q=\emptyset$ and $\hat{z_i} \in S(B_Y,  y_i^*, \alpha_i)$ for all $i=1,\ldots,n$. Then $\sum\limits_{i=1}^{n} \lambda_i (0,\hat{z_i}) \in S$ and so,
$$
\varepsilon > \|(x, y) - \sum\limits_{i=1}^{n} \lambda_i (0,\hat{z_i})\|_1 \geqslant \|x\|,
$$ 
which leads to a contradiction, proving our claim.\\
Let $\lambda_Q= \sum\limits_{i\in Q} \lambda_i$ and $\lambda_{Q^c}= \sum\limits_{j\in Q^c} \lambda_j.$ 
Consider $S_1= \sum\limits_{i\in Q} \frac{\lambda_i}{\lambda_Q} S(B_{X}, x_i^*, \alpha_i)$ and $S_2=\sum\limits_{j\in Q^c} \frac{\lambda_j}{\lambda_{Q^c}} S(B_{Y}, y_j^*, \alpha_j)$. Let $\sum\limits_{i\in Q} \frac{\lambda_i}{\lambda_Q} a_i \in S_1$. Define $$z_i=    \left\{ \begin{array}{rcl}
		(a_i,0), &\mbox{if}
		& i\in Q , \\
		(0,\hat{z_i}), &\mbox{if}
		& i \in Q^c .
	\end{array} \right.$$
	where $a_i \in S(B_X, x_i^*, \alpha_i) $ and $\hat{z_i} \in S(B_{Y}, y_i^*, \alpha_i)$. Then $\sum\limits_{i=1}^{n} \lambda_i z_i \in S$. Also 
    $$
    \varepsilon > \|(x, y) - \sum\limits_{i=1}^{n} \lambda_i z_i\|_1= \|x- \sum\limits_{i\in Q} \lambda_i a_i\|+ \|y- \sum\limits_{j\in Q^c} \lambda_j \hat{z_j} \|.
    $$
	This gives us that
    $$
    \lambda_Q= \sum\limits_{i\in Q} \lambda_i \geqslant \|\sum\limits_{i\in Q} \lambda_i a_i\| > \|x\|-\varepsilon ,
    $$
    and 
    $$
    \lambda_Q= 1- \lambda_{Q^c}= 1- \sum\limits_{j\in Q^c} \lambda_j \leqslant 1- \|\sum\limits_{j\in Q^c} \lambda_j \hat{z_j}\| < 1-\|y\|+\varepsilon = \|x\|+\varepsilon.
    $$
	Hence $|\|x\|- \lambda_Q| < \varepsilon$. Finally, 
	\begin{align*}
\bigg \|\frac{x}{\|x\|} - \sum\limits_{i\in Q} \frac{\lambda_i}{\lambda_Q} a_i \bigg \| & \leqslant \bigg \|\frac{x}{\|x\|} - \sum\limits_{i\in Q} \frac{\lambda_i}{\|x\|} a_i \bigg \| + \bigg \| \sum\limits_{i\in Q} \frac{\lambda_i}{\|x\|} a_i - \sum\limits_{i\in Q} \frac{\lambda_i}{\lambda_Q} a_i \bigg \|\\
& < \frac{\varepsilon}{\|x\|}+ \bigg |\frac{1}{\|x\|}- \frac{1}{\lambda_Q} \bigg |\\
& < \frac{\varepsilon}{\|x\|}+ \frac{\varepsilon}{\|x\| \lambda_Q}\\
& < \frac{\varepsilon}{\|x\|}+ \frac{\varepsilon}{\|x\| (\|x\|-\varepsilon)} = \varepsilon^{'} ~ \text{(say)}.
\end{align*}
Hence $S_1 \subset B \bigg (\frac{x}{\|x\|},  \frac{\varepsilon}{\|x\|}+ \varepsilon^{'} \bigg )$. Thus $\frac{x}{\|x\|} \in $  semi SCS $(B_{X})$.

Conversely, suppose that $\frac{x}{\|x\|}$ is a semi SCS point of $B_X$  whenever $x\neq 0$ and  $\frac{y}{\|y\|}$ is a semi SCS point of  $B_Y$ whenever $y\neq 0$.

Case 1: Either $x=0$ or $y=0.$

Without loss of generality, let $y=0$. Then $x\in S_X$ is a semi SCS point of $B_X.$
Then there exists a convex combination of slices $S=\sum_{i=1}^n \lambda_i S(B_{X}, x_i^*, \alpha_i)$ of $B_{X}$ such that $S \subset B(x, \varepsilon)$. By \cite[Lemma 2.1]{BS}, we can find a slice $S(B_{X \oplus_1 Y}, z_i^*, \beta_i)$ of $B_{X \oplus_1 Y}$ such that 
$$
S(B_{X \oplus_1 Y}, z_i^*, \beta_i) \subset S(B_{X}, x_i^*, \alpha_i) \times \varepsilon B_{Y}.
$$
Thus, 
\begin{align*}
\sum_{i=1}^n \lambda_i S(B_{X \oplus_p Y}, z_i^*, \beta_i) & \subset \sum_{i=1}^n \lambda_i (S(B_{X}, x_i^*, \alpha_i) \times \varepsilon B_{Y}) \\
& \subset B(x, \varepsilon) \times \varepsilon B_{Y} \\
& \subset B((x, 0), \varepsilon).
\end{align*}
This proves that $(x, 0)$ is a semi SCS point of $B_{X \oplus_p Y}$.

Case 2: Both $x$ and $y$ are nonzero.

Then $\frac{x}{\|x\|}$ be a semi SCS point of $B_X$ and $\frac{y}{\|y\|}$ be a semi SCS point of $B_Y$, respectively. Then, by previous arguments, we find that $(\frac{x}{\|x\|}, 0)$ and $(0, \frac{y}{\|y\|})$ are semi SCS points of $B_{X \oplus_1 Y}.$ We can now observe that $(x, y) = \Vert x \Vert (\frac{x}{\Vert x \Vert}, 0)+\Vert y \Vert(0, \frac{y}{\Vert y \Vert})$ with $\Vert x \Vert + \Vert y \Vert=1.$ Since the set of semi SCS points is convex, we conclude that $(x, y) $ is a semi SCS point of $B_{X \oplus_1 Y}$.

\end{proof}

%\begin{question}
%Does the converse of Corollary \ref{scslp} hold? 
%\end{question}

\begin{theorem}\label{inf sum semi scs}
%Let $X$ and $Y$ be Banach spaces. 
If $(x, y)$ is a semi SCS point of $B_{X \oplus_\infty Y}$, then $x $ is a semi SCS  point of $B_X$ and $y $ is a semi SCS  point of $B_Y$.
\end{theorem}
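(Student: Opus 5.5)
The plan is to mimic the semi denting case (Theorem \ref{inf sum semi dent}), with Lemma \ref{BDP lem inf} applied to each slice of the convex combination separately. Fix $\varepsilon>0$. Since $(x,y)$ is a semi SCS point of $B_{X\oplus_\infty Y}$, there is a convex combination of slices
$$S=\sum_{i=1}^n \lambda_i S_i, \qquad S\subset B((x,y),\varepsilon),$$
where each $S_i$ is a slice of $B_{X\oplus_\infty Y}$. We prove the assertion for $x$; the one for $y$ is symmetric, using the ``resp.'' part of Lemma \ref{BDP lem inf}.

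First, apply Lemma \ref{BDP lem inf} to each $S_i$. This gives a slice $T_i$ of $B_X$ and a point $y_i\in B_Y$ with $T_i\times\{y_i\}\subset S_i$. Then form the convex combination of slices of $B_X$
$$T=\sum_{i=1}^n\lambda_i T_i.$$
Its candidacy rests on one observation. Take any element $\sum_i\lambda_i a_i\in T$ with $a_i\in T_i$. Then
$$\sum_i\lambda_i(a_i,y_i)=\Bigl(\sum_i\lambda_i a_i,\ \sum_i\lambda_i y_i\Bigr)\in S\subset B((x,y),\varepsilon).$$
The $\ell_\infty$-norm dominates the first coordinate, so $\|\sum_i\lambda_i a_i-x\|<\varepsilon$. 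This shows $T\subset B(x,\varepsilon)$. As $\varepsilon$ was arbitrary, $x$ is a semi SCS point of $B_X$.

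The only point needing care is that the $y_i$ depend only on the slices and not on the chosen $a_i$. Lemma \ref{BDP lem inf} guarantees this: the whole slice $T_i$ is paired with the single fixed $y_i$, so the second coordinate $\sum_i\lambda_i y_i$ is a fixed vector and never obstructs the estimate. I expect no further obstacle. One caveat is that we do not a priori know $\|x\|=1$. The definition of semi SCS point as given does not require it, and it is not needed here.
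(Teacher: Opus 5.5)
Your proposal is correct and follows essentially the same route as the paper's proof. The paper also applies Lemma \ref{BDP lem inf} to each slice separately, fixes the second coordinate $y_0=\sum_i\lambda_i y_i$, and uses the fact that the $\ell_\infty$-norm dominates the first coordinate to get $\sum_i\lambda_i\widetilde{S}_i\subset B(x,\varepsilon)$.
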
     
\begin{proof}        
Let $\varepsilon >0$ and $(x, y) $ be a semi SCS point of $B_{X \oplus_\infty Y}$. Then there exists a collection $\{S_i:=S(B_{X \oplus_\infty Y},z_i^*,\del)\}_{i=1}^n$ of slices of $B_{X \oplus_\infty Y}$ and $\{\lambda_i\}_{i=1}^n$ with $\lambda_i \geq 0$ for all $i$ and $\sum_{i=1}^n\lambda_i=1$ such that $S:= \sum_{i=1}^n \lambda_i S_i  \subseteq B((x, y), \varepsilon)$. Then, for each $i$, by Lemma $\ref{BDP lem inf}$, we can find a slice $\widetilde{S}_i:=S(B_X,x_i^*,\alpha_i)$ of $B_{X}$ and $y_i \in B_{Y}$ such that 
$\widetilde{S}_i \times \{y_i \} \subset S_i.$
Consider $y_0=\sum_{i=1}^n \lambda_i y_i$, then 
$
\sum_{i=1}^n \lambda_i \widetilde{S}_i \times \{y_0 \} \subset \sum_{i=1}^n \lambda_i S(B_{X \oplus_\infty Y}, z_i^*, \delta_i).
$
Let $\hat{x} \in \sum_{i=1}^n \lambda_i S(B_{X}, x_i^*, \alpha_i)$. Then 
$$ 
(\hat{x}, y_0) \in \sum_{i=1}^n \lambda_i S(B_{X \oplus_\infty Y}, z_i^*, \delta_i) \subset B((x, y), \varepsilon).
$$
Thus $\|\hat{x} - x\|\leqslant \|(\hat{x}, y_0) - (x, y)\|_\infty< \varepsilon.$
So, $\hat{x} \in B(x, \varepsilon)$. Thus, $\sum_{i=1}^n \lambda_i \widetilde{S}_i \subset B(x, \varepsilon),$ showing that $x$ is a semi SCS point of $B_X$. Similarly, we can show that $y $ is a semi SCS point of $B_Y$.
\end{proof}

\begin{question}
Does the converse of Theorem \ref{inf sum semi scs} hold?
  % Let $X$ and $Y$ be Banach spaces. If $x \in$ semi SCS $B_X$ and $y \in$ semi SCS $B_Y$, does it follow that $(x, y) \in$ semi SCS $B_{X \oplus_\infty Y}$?
\end{question}

\begin{corollary} \label{inf semi SCS cor}
Let $\{X_i : i \in I\}$ be a family of Banach spaces and $X=\oplus_\infty X_i$. If $(x_i)$ is a semi SCS point of $B_X$, then $x_i$ is a semi SCS in $B_{X_i}$ for all $i \in I.$    
\end{corollary}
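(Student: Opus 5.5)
The plan is to reduce to the two-summand case, as in Corollary~\ref{inf sum semi dent coro} and Corollary~\ref{inf sum PC coro}, and then apply Theorem~\ref{inf sum semi scs}.

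Fix $i_0 \in I$ and set $Y = \mathop{\bigoplus_\infty}\limits_{i\neq i_0} X_i$. The map
\[
(x_i)_{i\in I} \mapsto \bigl(x_{i_0}, (x_i)_{i\neq i_0}\bigr)
\]
is a surjective linear isometry from $X$ onto $X_{i_0}\oplus_\infty Y$, since the $\ell_\infty$-norm of the family is the maximum of $\|x_{i_0}\|$ and $\sup_{i\neq i_0}\|x_i\|$. A linear surjective isometry carries $B_X$ onto $B_{X_{i_0}\oplus_\infty Y}$. It is also weak-to-weak continuous with continuous inverse, so it sends slices to slices and convex combinations of slices to convex combinations of slices. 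Norm balls are mapped to norm balls of the same radius. Hence semi SCS points of $B_X$ correspond exactly to semi SCS points of $B_{X_{i_0}\oplus_\infty Y}$.

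So if $(x_i)$ is a semi SCS point of $B_X$, then $(x_{i_0}, (x_i)_{i\neq i_0})$ is a semi SCS point of $B_{X_{i_0}\oplus_\infty Y}$. Theorem~\ref{inf sum semi scs} then says $x_{i_0}$ is a semi SCS point of $B_{X_{i_0}}$. Since $i_0$ was arbitrary, the conclusion holds for all $i \in I$.

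There is no genuine obstacle. The only point needing care is that Theorem~\ref{inf sum semi scs} is applied with $Y$ an arbitrary Banach space, here itself an infinite $\ell_\infty$-sum. This is fine, because Lemma~\ref{BDP lem inf} and the proof of Theorem~\ref{inf sum semi scs} use no property of the second summand.
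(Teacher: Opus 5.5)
Your proposal is correct and is essentially the paper's approach: the paper gives no separate proof of this corollary, but it follows exactly as Corollaries~\ref{inf sum semi dent coro} and~\ref{inf sum PC coro} do, by writing $X = X_{i_0}\oplus_\infty Y$ with $Y=\mathop{\bigoplus_\infty}\limits_{i\neq i_0} X_i$ and applying Theorem~\ref{inf sum semi scs}. Your extra remarks make explicit what the paper leaves implicit: that this identification is an isometry preserving slices, convex combinations of slices and balls, and that Theorem~\ref{inf sum semi scs} places no restriction on the second summand.
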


\begin{remark}
    %Since $\ell_\infty = \oplus_\infty \mathbb{R}$ has no semi SCS points, the converse of Corollary \ref{inf semi SCS cor} does not hold for infinite $I$.

    The converses of Corollary \ref{inf sum semi dent coro}, Corollary \ref{inf sum PC coro} and Corollary \ref{inf semi SCS cor} do not hold for infinite $I$. 
    For example, although $\mathbb{R}$ has semi denting (resp. semi PC and semi SCS) points, the space $l_{\infty}= \oplus_{\infty}\mathbb{R}$ has no such points as it satisfies the strong diameter 2 property (\cite[Proposition 3.5]{L2}). Similarly, when $K$ is a compact set without any isolated point, $B_{C(K)}$ does not have any semi denting (resp. semi PC or semi SCS) point. 
\end{remark}

%The answer for the infinite dimensional case of the above question can be answered in negative, again using the strong diameter 2 property of $\ell_\infty$.

%========================================================================================
\section{ideals of Banach spaces} \label{ideal sec}
%In this section, we discuss the stability of semi denting point (resp. semi PC, semi SCS point) in $M$-ideals. We assume that the $M$-ideals are non-trivial.
%In this section, we study lifting semi denting (resp. semi PC and semi SCS) points from M-ideals, ai-ideals and strict-ideals to the space itself. We show that the $w^*$ semi denting points and $w^*$ -semi PC points
%semi $w^*$-denting and related notions 
%can be lifted from $Y^*$ to $X^*$.  
%We begin by establishing that no point in the unit sphere of an M-ideal is a semi denting point of the closed unit ball of $X$.

\begin{proposition}
If $Y$ is an $M$-ideal in $X$, then no point of $S_Y$ is a semi denting point of $B_X$. 
\end{proposition}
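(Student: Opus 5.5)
We argue by contradiction, using the known fact that slices of $B_X$ around points of a proper $M$-ideal have large diameter. Assume $Y$ is a nontrivial $M$-ideal in $X$, so $Y\neq X$. Suppose $y_0\in S_Y$ is a semi denting point of $B_X$. Fix $0<\varepsilon<1$. By assumption there is a slice $S=S(B_X,x^*,\delta)$ with $x^*\in S_{X^*}$ and $S\subset B(y_0,\varepsilon)$. Our aim is to find two points of $S$ that are far apart: one close to $y_0$ in norm, and one at distance close to $2$ from it. That contradicts $\operatorname{diam} S\le 2\varepsilon<2$ once $\varepsilon$ is small.

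The first step uses the decomposition $X^*=Y^*\oplus_1 Y^\perp$. Write $x^*=y^*+y^\perp$ with $\|x^*\|=\|y^*\|+\|y^\perp\|=1$.

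If $y^*\neq 0$, we choose $y_1\in B_Y$ with $y^*(y_1)>\|y^*\|-\delta/2$. If $y^\perp\ne0$, we also choose $x_1\in B_X$ with $y^\perp(x_1)>\|y^\perp\|-\delta/2$.

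We then invoke the standard $M$-ideal approximation property (the "3-ball" or Behrends-type property; see \cite[Ch. I]{HWW}). It says that for $y\in B_Y$, $x\in B_X$ and a finite set of functionals, there is a net $(y_\alpha)\subset Y$ such that $y_\alpha+x-P_Yx$ behaves well. A convenient form: since $B_Y$ is weak$^*$-dense in $B_{Y^{\perp\perp}}$ and $X^{**}=Y^{**}\oplus_\infty Y^{\perp}{}^{*}$-type, one obtains a net $(u_\alpha)$ in $B_X$ with
\[
y^*(u_\alpha)\to y^*(y_1),\qquad y^\perp(u_\alpha)\to y^\perp(x_1).
\]
Consequently $x^*(u_\alpha)\to y^*(y_1)+y^\perp(x_1)>1-\delta$. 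This is the step we expect to be the main obstacle: transferring the $\oplus_\infty$ structure of $X^{**}=Y^{\perp\perp}\oplus_\infty (\ker P)^{\perp}$ back to $B_X$ by Goldstine.

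The second step builds the distant point. Alongside $u_\alpha$ we also take the "reflected" element $v_\alpha$, obtained by the same construction with $-y_0$ in the $Y$-coordinate but the same $Y^\perp$-behaviour. More concretely, we work in $X^{**}=Y^{**}\oplus_\infty Z$ and consider
\[
\xi=\hat y_0+\eta,\qquad \xi'=-\hat y_0+\eta,
\]
where $\eta$ lies in the $Z$-part, has norm $\le1$, and is chosen so that $y^\perp(\eta)\approx\|y^\perp\|$. Both $\xi$ and $\xi'$ lie in $B_{X^{**}}$. The key observation is that $\xi$ lies in the weak$^*$ closure of the slice. Since $y_0$ is within $\varepsilon$ of a point of $S$, we get $y^*(y_0)\ge\|y^*\|-$ small.

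If $\|y^*\|$ is small, then $\xi'$ is also in the weak$^*$-closed slice of $B_{X^{**}}$. By Goldstine, points of $S$ are then weak$^*$ close to $\xi'$. Weak$^*$ lower semicontinuity of the norm gives points of $S$ at distance about $\|\xi-\xi'\|\ge 2-$ small from $y_0$, which is a contradiction.

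If $\|y^*\|$ is not small, we instead perturb only the $Z$-part. We take $\eta$ and $-\eta'$ for two choices in $B_Z$. The nontriviality $Z\ne0$, which comes from $Y\ne X$, then yields two points of $S$ whose distance is bounded below by a constant independent of $\varepsilon$. This contradicts $S\subset B(y_0,\varepsilon)$ for small $\varepsilon$.

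We would try first the cleaner known result: every slice of $B_X$ containing a point of $B_Y$ has diameter $2$ when $Y$ is a proper $M$-ideal. This is the diameter-two property of nontrivial $M$-ideals, due to L\'opez-P\'erez and others. If that result can be cited or reproved along the lines above, the proposition follows immediately. Indeed, $S\subset B(y_0,\varepsilon)$ forces $\operatorname{diam}S\le2\varepsilon<2$.
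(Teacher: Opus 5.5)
There is a genuine gap. The step you flag as the main obstacle is routine. With $P$ the $L$-projection on $X^*$ with kernel $Y^\perp$, one has $X^{**}=Y^{\perp\perp}\oplus_\infty Z$ with $Z=\ker P^*=(\operatorname{ran}P)^\perp$ (not $(\ker P)^\perp$, which is $Y^{\perp\perp}$ again), and Goldstine does the rest.

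The real problem is your second step, which is only gestured at, and the mechanisms you offer there fail:
\begin{enumerate}
\item The claim that $\hat y_0+\eta$ lies in the weak$^*$ closure of $S$ is unjustified. Since $y_0$ need not belong to $S$, the hypothesis only gives $y^*(y_0)>1-\delta-\varepsilon$.
\item That same inequality forces $\|y^*\|>1-\delta-\varepsilon$. So your case ``$\|y^*\|$ small'' is vacuous, and everything rests on the case you settle in one sentence.
\item In that case the symmetric construction fails. If $\phi+\eta$ and $\phi-\eta'$ both satisfy $x^*>1-\delta$, then, since $y^*(\phi)\le 1-\|y^\perp\|$, both $\eta$ and $-\eta'$ lie in $\{\eta\in B_Z: y^\perp(\eta)>\|y^\perp\|-\delta\}$. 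This weak$^*$ slice can have tiny diameter when $\|y^\perp\|$ is large compared with $\delta$ (e.g.\ $Z=\mathbb{R}$). You cannot force $\|y^\perp\|\ll\delta$: shrinking $\delta$ only yields $\|y^\perp\|\le\varepsilon$.
\item The fallback cannot be cited, because it is false for $M$-summands. Take $X=\ell_\infty^2$, $Y=\mathbb{R}\times\{0\}$, $0<\delta<1/2$ and $b=0.9\delta$. The slice $S(B_X,(1-b,b),\delta)$ contains $(1,0)\in S_Y$, has diameter $10/9$, and lies within distance $1$ of $(1,0)$. So neither ``diameter $2$'' nor a point ``at distance close to $2$'' from $y_0$ is available in general.
\end{enumerate}
This is exactly why the paper handles $M$-summands separately, via Theorem~\ref{inf sum semi dent} ($0$ is never a semi denting point of $B_Z$ when $Z\neq\{0\}$). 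It appeals to \cite{BS1} only for $M$-ideals that are not $M$-summands.

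The missing idea is to compare a \emph{single} point of the weak$^*$ closure of $S$ with $\hat y_0$ through the $\ell_\infty$-decomposition; with that, your set-up gives a complete proof. Since $y^*$ vanishes on $Z$ and $y^\perp$ vanishes on $Y^{\perp\perp}$, we have $\sup_{B_{Y^{\perp\perp}}}y^*=\|y^*\|$ and $\sup_{B_Z}y^\perp=\|y^\perp\|$. Pick $\phi_0\in B_{Y^{\perp\perp}}$ with $y^*(\phi_0)>\|y^*\|-\delta/2$. Using $Z\neq\{0\}$ and normalizing, pick $\eta_0\in S_Z$ with $y^\perp(\eta_0)>\|y^\perp\|-\delta/2$. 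Then $\xi_0=\phi_0+\eta_0\in B_{X^{**}}$ and $x^*(\xi_0)>1-\delta$. Because $\hat y_0\in Y^{\perp\perp}$, we get $\|\xi_0-\hat y_0\|=\max\{\|\phi_0-\hat y_0\|,\|\eta_0\|\}\ge1$. A Goldstine net $(u_\alpha)\subset B_X$ converging weak$^*$ to $\xi_0$ eventually lies in $S$. Weak$^*$ lower semicontinuity of the norm then gives $\liminf_\alpha\|u_\alpha-y_0\|\ge1$, contradicting $S\subset B(y_0,\varepsilon)$ for $\varepsilon<1$.

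Completed this way, your route genuinely differs from the paper's. It treats $M$-summands and non-summand $M$-ideals uniformly and needs no appeal to \cite{BS1}. It even shows that no point of $Y$ is a semi denting point of $B_X$, given $Y\neq X$, which the statement tacitly requires. As submitted, though, this step is absent, and what stands in its place does not work.
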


\begin{proof}
   If $Y$ is an $M$-summand in $X$, then $X=Y \oplus_{\infty} Z$ for some closed subspace $Z$ of $X$. Now, if we consider $(y, 0) \in S_X,$ then by virtue of Theorem $\ref{inf sum semi dent}$, $(y, 0)$ can not be a semi denting point of $B_X$. If $Y$ is an $M$-ideal in $X,$ which is not an $M$-summand in $X,$ then $B_X$ cannot have arbitrarily small slices (\cite{BS1}).  Hence, no point of $S_Y$ can be a semi denting point of $B_X.$ Hence the result. 
\end{proof}
%=================================================

\begin{theorem} \label{mideal-semiscs}
       If $Y$ is an $M$-ideal in $X$ and $x_0^* $ is a semi $w^*$-SCS point of $B_{Y^*}$, then $x_0^*$ is a semi $w^*$-SCS point of $B_{X^*}.$
\end{theorem}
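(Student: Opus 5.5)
The plan is to use the decomposition $X^* = Y^* \oplus_1 Y^\perp$ and write elements of $B_{X^*}$ as $x^* = Px^* + (x^*-Px^*)$. Here $Px^*$ is identified with the unique norm preserving extension of $x^*|_Y$, and the norm is $\|x^*\| = \|Px^*\| + \|x^*-Px^*\|$. We regard $x_0^*$ as an element of $P(X^*)\cong Y^*$. Fix $\varepsilon>0$ and choose a convex combination $\sum_{i=1}^n\lambda_i S(B_{Y^*},y_i,\alpha_i)$ of $w^*$-slices of $B_{Y^*}$ contained in $B(x_0^*,\varepsilon)$, where $y_i\in S_Y$. Since $y_i\in Y\subset X$, each $y_i$ also defines a $w^*$-slice $S(B_{X^*},y_i,\beta_i)$ of $B_{X^*}$, and $\sup y_i(B_{X^*})=1=\sup y_i(B_{Y^*})$. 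We will show that for suitably small $\beta_i\le\alpha_i$, the convex combination $\sum_i\lambda_i S(B_{X^*},y_i,\beta_i)$ lies in $B(x_0^*,c\varepsilon)$ for a fixed constant $c$.

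The key step is to control the $Y^\perp$-part. Take $x^*\in S(B_{X^*},y_i,\beta_i)$. Since $(x^*-Px^*)(y_i)=0$, we have $Px^*(y_i)=x^*(y_i)>1-\beta_i$. Hence $\|Px^*\|>1-\beta_i$, and the $L$-decomposition gives $\|x^*-Px^*\|\le 1-\|Px^*\|<\beta_i$. Moreover $Px^*$, viewed in $B_{Y^*}$, lies in $S(B_{Y^*},y_i,\beta_i)\subset S(B_{Y^*},y_i,\alpha_i)$. Now let $z^*=\sum_i\lambda_i x_i^*$ with $x_i^*\in S(B_{X^*},y_i,\beta_i)$. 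Then $Pz^*=\sum_i\lambda_iPx_i^*$ belongs to the original combination of $Y^*$-slices, so $\|Pz^*-x_0^*\|<\varepsilon$. Also $\|z^*-Pz^*\|\le\sum_i\lambda_i\beta_i<\varepsilon$ once we take $\beta_i<\varepsilon$. Therefore $\|z^*-x_0^*\|<2\varepsilon$.

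The main point to check is the identification of norms: for elements of $P(X^*)$, the $X^*$-norm must agree with the $Y^*$-norm of their restriction to $Y$. This holds because $P$ is the norm one projection onto the Hahn--Banach extensions, and it is exactly what allows us to compare $\|Pz^*-x_0^*\|$ computed in $X^*$ with the corresponding distance in $Y^*$. No separate treatment of whether $x_0^*$ is on the sphere is needed, since the argument works for any point of $B_{Y^*}$.
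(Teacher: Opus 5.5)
Your proposal is correct and follows the same route as the paper: you decompose $X^*=Y^*\oplus_1 Y^\perp$, reuse the vectors $y_i$ to form $w^*$-slices of $B_{X^*}$ with parameters below $\min\{\alpha_i,\varepsilon\}$, and bound the $Y^\perp$-component by the slice parameter to land in $B(x_0^*,2\varepsilon)$. Your estimate $\|x^*-Px^*\|\le 1-\|Px^*\|<\beta_i$, together with the isometric identification of $P(X^*)$ with $Y^*$, makes explicit the inclusion $S(B_{X^*},y_i,\mu_i)\subseteq S(B_{Y^*},y_i,\alpha_i)\times\mu_iB_{Y^\perp}$, which the paper asserts without detail.
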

\begin{proof}
  Let $\varepsilon >0$ and $x_0^* $ be a semi $w^*$-SCS point of $B_{Y^*}$. Then there exists a convex combination of $w^*$-slices $S= \sum_{i=1}^n \lambda_i S(B_{Y^*}, y_i, \alpha_i)$ of $B_{Y^*}$, where $y_i^* \in S_{Y^*},$ $\alpha_i >0$ and $0 < \lambda_i \leq 1$ with $\sum_{i=1}^n \lambda_i =1$, for all $1 \leq i \leq n$, such that $S \subset B(x_0^*, \varepsilon)$, . Note that, since $Y$ is an $M$-ideal in $X$, $X^*=Y^* \oplus_1 Y^\perp$. So, for each $x^* \in X^*$, there is $y^* \in Y^*$ and $y^\perp \in Y^\perp$ such that $x^*=y^*+y^\perp$ and $\|x^*\|=\|y^*\|+\|y^\perp\|$. Let $0< \mu_i < \min \{\alpha_i, \varepsilon\}$ and consider the $w^*$-slices of $B_{X^*}$ given by
    \begin{align*}
       S(B_{X^*}, y_i, \mu_i) & = \{ x^* \in B_{X^*} : x^*(y_i) >1-\mu_i \} \\
        & = \{ x^* \in B_{X^*} : (y^* + y^\perp)(y_i) > 1-\mu_i \} \\
        & \subseteq S(B_{Y^*}, y_i, \alpha_i) \times \mu_i B_{Y^\perp}. 
    \end{align*}
    So, 
    \begin{align*}
        \sum_{i=1}^n \lambda_i S(B_{X^*}, y_i, \mu_i) & \subseteq \sum_{i=1}^n \lambda_i S(B_{Y^*}, y_i, \alpha_i) \times \mu_i B_{Y^\perp} \\
        & \subset B(x_0^*, \varepsilon) \times \mu_i B_{Y^\perp} \\
        & \subset B(x_0^*, 2 \varepsilon),
    \end{align*}
    which asserts that $x_0^*$ is a semi $w^*$-SCS point of $B_{X^*}$.
\end{proof}
By repeating the preceding argument for $n=1$, we obtain 
\begin{corollary}
    If $Y$ is an $M$-ideal in $X$ and $x_0^* $ is a semi $w^*$-denting point of $B_{Y^*}$, then $x_0^* $ is a semi $w^*$-denting point of $B_{X^*}.$
\end{corollary}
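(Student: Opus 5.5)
The plan is to run the argument of Theorem \ref{mideal-semiscs} with a single $w^*$-slice, using the $M$-ideal decomposition $X^*=Y^*\oplus_1 Y^\perp$. Here $Y^*$ is identified with the range of the $M$-projection $P$, so every element of $Y^*$ is regarded as its unique norm-preserving extension in $X^*$.

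First, fix $\varepsilon>0$. Since $x_0^*$ is a semi $w^*$-denting point of $B_{Y^*}$, there are $y\in S_Y$ and $\alpha>0$ with
\[
S(B_{Y^*},y,\alpha)\subset B(x_0^*,\varepsilon).
\]
Choose $0<\mu<\min\{\alpha,\varepsilon\}$ and consider the $w^*$-slice $S(B_{X^*},y,\mu)$ of $B_{X^*}$. This is a genuine $w^*$-slice, since $y\in Y\subset X$ and $\sup_{x^*\in B_{X^*}} x^*(y)=\|y\|=1$.

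Next, take $x^*\in S(B_{X^*},y,\mu)$ and write $x^*=y^*+y^\perp$ with $\|x^*\|=\|y^*\|+\|y^\perp\|\le 1$. Since $y^\perp(y)=0$, we get $y^*(y)=x^*(y)>1-\mu>1-\alpha$. Together with $\|y^*\|\le 1$, this gives $y^*\in S(B_{Y^*},y,\alpha)$, so $\|y^*-x_0^*\|<\varepsilon$. We also have
\[
\|y^\perp\|\le 1-\|y^*\|\le 1-y^*(y)<\mu<\varepsilon .
\]
Hence $\|x^*-x_0^*\|\le\|y^*-x_0^*\|+\|y^\perp\|<2\varepsilon$. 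This shows $S(B_{X^*},y,\mu)\subset B(x_0^*,2\varepsilon)$, and since $\varepsilon$ is arbitrary, $x_0^*$ is a semi $w^*$-denting point of $B_{X^*}$.

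The one step needing care is the bound $\|y^\perp\|<\mu$. It uses the $\ell_1$-additivity of the norm, not just the inclusion of slices written loosely in the proof of Theorem \ref{mideal-semiscs}. It also requires that the restriction of $y^*$ to $Y$ has norm $\|y^*\|$, which holds because $P$ maps onto the norm-preserving extensions. Everything else is routine.
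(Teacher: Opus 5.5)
Your proof is correct and is the paper's argument: the paper obtains this corollary by running the proof of Theorem~\ref{mideal-semiscs} with a single $w^*$-slice, using $X^*=Y^*\oplus_1 Y^\perp$ to place $S(B_{X^*},y,\mu)$ inside $S(B_{Y^*},y,\alpha)\times\mu B_{Y^\perp}$. Your estimate $\|y^\perp\|\le 1-\|y^*\|\le 1-y^*(y)<\mu$, together with the isometric identification of $P(X^*)$ with $Y^*$, justifies the inclusion that the paper states without proof.
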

\begin{theorem}\label{mideal-semiPC}
	If $Y$ is an $M$-ideal in $X$ and $x_0^*$ is a semi $w^*$-PC of $B_{Y^*}$, then $x_0^*$ is a semi $w^*$-PC of $B_{X^*}.$
\end{theorem}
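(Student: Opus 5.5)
We mimic the proof of Theorem \ref{mideal-semiscs}, replacing the convex combination of $w^*$-slices by a finite intersection of $w^*$-slices. Since $Y$ is an $M$-ideal in $X$, we use the decomposition $X^* = Y^*\oplus_1 Y^\perp$. Here $Y^*$ is identified with the range of the $L$-projection $P$, i.e. each $y^*\in Y^*$ is identified with its unique norm-preserving extension. Thus every $x^*\in X^*$ is written uniquely as $x^*=y^*+y^\perp$ with $\|x^*\|=\|y^*\|+\|y^\perp\|$. For $y\in Y$ we have $x^*(y)=y^*(y)$, so $w^*$-slices of $B_{X^*}$ determined by points of $Y$ are controlled by the $Y^*$-component.

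Fix $\varepsilon>0$. Since $x_0^*$ is a semi $w^*$-PC of $B_{Y^*}$ and finite intersections of $w^*$-slices form a base for the relative $w^*$-topology (up to containing a nonempty one inside any nonempty $w^*$-open set), we find $y_1,\dots,y_n\in S_Y$ and $\alpha_i>0$ with
$$\emptyset\neq V:=\bigcap_{i=1}^n S(B_{Y^*},y_i,\alpha_i)\subset B(x_0^*,\varepsilon).$$
To control the $Y^\perp$-part we add one more slice. Pick $y^*_0\in V$ and $\eta>0$ with $y_0^*(y_i)>1-\alpha_i+\eta$ for all $i$; by shrinking we may assume $\|y_0^*\|>1-\eta$. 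Choose $y_0\in S_Y$ with $y_0^*(y_0)>1-\eta$, and set $\mu_0=\eta$ (taking $\eta<\varepsilon$). Consider the nonempty $w^*$-open set
$$W:=\bigcap_{i=0}^n S(B_{X^*},y_i,\mu_i),\qquad \mu_i=\alpha_i \ (1\le i\le n).$$
It contains $y_0^*$ (viewed in $X^*$), hence it is nonempty.

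Now let $x^*=y^*+y^\perp\in W$. Then $y^*(y_i)=x^*(y_i)>1-\alpha_i$ for all $i\ge1$, and $\|y^*\|\le 1$, so $y^*\in V\subset B(x_0^*,\varepsilon)$. Moreover $\|y^*\|\ge y^*(y_0)>1-\eta$, so $\|y^\perp\|=\|x^*\|-\|y^*\|<\eta<\varepsilon$. Hence $\|x^*-x_0^*\|\le\|y^*-x_0^*\|+\|y^\perp\|<2\varepsilon$, i.e. $W\subset B(x_0^*,2\varepsilon)$, and $x_0^*$ is a semi $w^*$-PC of $B_{X^*}$.

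The main obstacle is the step killing the $Y^\perp$-component. The slices $S(B_{X^*},y_i,\alpha_i)$ alone do not force $\|y^\perp\|$ small, since the $\alpha_i$ may be large; the proof of Theorem \ref{mideal-semiscs} glosses over this by shrinking $\mu_i$. In the PC setting we cannot simply shrink $\mu_i$, because this may make the intersection empty. The fix is the extra slice $S(B_{X^*},y_0,\eta)$ through a point of $V$ of norm close to $1$. To justify that such a point exists, we note that $V$ is a relatively $w^*$-open subset of $B_{Y^*}$, and $S_{Y^*}$ is $w^*$-dense in $B_{Y^*}$ when $Y$ is infinite dimensional. If $Y$ is finite dimensional, $V$ is open in norm and we can move toward the sphere; alternatively, we can first replace $V$ by $V\cap S(B_{Y^*},y,\cdot)$ for a suitable norming $y$.
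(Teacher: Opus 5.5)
Your argument is the paper's argument written out in detail. The paper also anchors at a norm-one point $y_0^*\in S_{Y^*}\cap U$. It then appeals to the technique of \cite[Proposition 2.6]{BS1} to get that the corresponding relatively $w^*$-open subset of $B_{X^*}$ lies in $U+\varepsilon B_{Y^\perp}$. Your extra slice $S(B_{X^*},y_0,\eta)$ with $y_0^*(y_0)>1-\eta$ is precisely what makes that inclusion true, and your estimate $\|y^\perp\|=\|x^*\|-\|y^*\|<\eta$ is correct.

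When $Y$ is infinite-dimensional this is a complete proof. $S_{Y^*}$ is $w^*$-dense in $B_{Y^*}$, so you can first take $y_0^*\in V\cap S_{Y^*}$, and then any $0<\eta<\varepsilon$ works. The margin condition $y_0^*(y_i)>1-\alpha_i+\eta$ is not needed for $W\neq\emptyset$.

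Your finite-dimensional fallback, however, is wrong, and it cannot be fixed. If $\dim Y<\infty$, the set $V$ supplied by the hypothesis may be a small ball around an interior point of $B_{Y^*}$. Then ``moving toward the sphere'' leaves $V$, and $V\cap S(B_{Y^*},y,\cdot)$ may be empty.

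In fact the statement fails there. Let $X=\mathbb{R}\oplus_\infty Z$ with $Z$ infinite-dimensional, and let $Y=\mathbb{R}\oplus 0$, which is an $M$-summand and hence an $M$-ideal.
\begin{itemize}
\item Every point of $B_{Y^*}=[-1,1]$, in particular $0$, is a semi $w^*$-PC of $B_{Y^*}$, since relatively open intervals are relatively $w^*$-open.
\item Since $X$ is infinite-dimensional, every nonempty relatively $w^*$-open subset of $B_{X^*}$ contains norm-one functionals. Indeed, a basic $w^*$-neighbourhood contains a translate of the nonzero subspace $\{x_1,\dots,x_n\}^\perp$.
\item Hence $0$ is not a semi $w^*$-PC of $B_{X^*}$.
\end{itemize}
The paper's proof carries the same hidden assumption when it picks $y_0^*\in S_{Y^*}\cap U$.

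By your argument, the theorem is true when $Y$ is infinite-dimensional or when $\|x_0^*\|=1$ (and trivially when $\dim X<\infty$). In the case $\|x_0^*\|=1$ with $\dim Y<\infty$, simply choose $V$ to be a basic neighbourhood of $x_0^*$ itself inside $B(x_0^*,\varepsilon)$ and take $y_0^*=x_0^*$.
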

\begin{proof}
  	Let $\varepsilon>0$ and $x_0^* $ be a semi $w^*$-PC of $B_{Y^*}$. Then there exists a $w^*$-open subset $U$ of $B_{Y^*}$ such that $U \subset B(x_0^*, \varepsilon).$ Let $y_0^* \in S_{Y^*}\bigcap U$, $y_i\in B_Y, i=1, \cdots, n$ for some $n \in \N$ and $\delta >0$ be such that $\hat{U} := \{ y^* \in B_{Y^*} : \vert y^*(y_i) - y_0^*(y_i)\vert <\delta, 1 \leq i \leq n \}\subset U$. Then $\hat{U}$ is a $w^*$-open set in $B_{Y^*}$ and as $Y$ is an $M$-ideal in $X$, by following the techniques in \cite[Proposition 2.6]{BS1}, we find that $\hat{U} \subset U + \varepsilon B_{Y^\perp}.$ Thus, $ \hat{U} \subset B(x_0^*, \varepsilon)+ \varepsilon B_{Y^\perp} \subset B(x_0^*, 2 \varepsilon).$ This results that $x_0^* $ is a semi $w^*$-PC of $B_{X^*}.$
\end{proof}

\begin{theorem}
    Let $Y$ be a strict ideal of $X.$ If $x_0^* $ is a semi $w^*$-SCS point of $B_{Y^*}$, then $x_0^* $ is a semi $w^*$-SCS point of $B_{X^*}.$ 
\end{theorem}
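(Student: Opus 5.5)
The plan is to identify $Y^*$ with $P(X^*)$ and then use the $w^*$-density of $B_{P(X^*)}$ in $B_{X^*}$ to pass from slices of $B_{Y^*}$ to slices of $B_{X^*}$.

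Let $P$ be the norm one projection on $X^*$ with $\ker P=Y^\perp$ and $B_{P(X^*)}$ $w^*$-dense in $B_{X^*}$. I will first record the standard identification. The map $f:Y^*\to X^*$, $f(y^*)=P(\tilde y^*)$ with $\tilde y^*$ any extension of $y^*$, is well defined because $\ker P=Y^\perp$. It is a Hahn--Banach extension operator: $(Px^*)|_Y=x^*|_Y$ since $x^*-Px^*\in Y^\perp$, and $\|f(y^*)\|\le\|\tilde y^*\|$ for a norm-preserving extension. Its range is $P(X^*)$, and restriction $z\mapsto z|_Y$ is an isometry of $P(X^*)$ onto $Y^*$. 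So $x_0^*$ is regarded as the element $f(x_0^*)\in B_{X^*}$, and the statement should be read in this sense.

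Next, fix $\varepsilon>0$ and take a convex combination of $w^*$-slices $S=\sum_{i=1}^n\lambda_i S(B_{Y^*},y_i,\alpha_i)\subset B(x_0^*,\varepsilon)$, with $y_i\in S_Y$. As candidate I take the same data in $X^*$:
\[
T=\sum_{i=1}^n\lambda_i S(B_{X^*},y_i,\alpha_i).
\]
These are genuine $w^*$-slices of $B_{X^*}$, and $\sup_{B_{X^*}}y_i=\|y_i\|=\sup_{B_{Y^*}}y_i$. The key observation concerns $z\in B_{P(X^*)}\cap S(B_{X^*},y_i,\alpha_i)$. For such $z$, the restriction $z|_Y$ lies in $S(B_{Y^*},y_i,\alpha_i)$ and $z=f(z|_Y)$. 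Hence any convex combination $\sum\lambda_i z_i$ with $z_i\in B_{P(X^*)}\cap S(B_{X^*},y_i,\alpha_i)$ is the image under the isometry $f$ of a point of $S$. It therefore lies within $\varepsilon$ of $f(x_0^*)$.

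The main step is to pass from $B_{P(X^*)}$ to all of $B_{X^*}$, and here $w^*$-density is used. Let $x_i^*\in S(B_{X^*},y_i,\alpha_i)$. Since this slice is relatively $w^*$-open in $B_{X^*}$ and $B_{P(X^*)}$ is $w^*$-dense, there is a net $(z_{i,\gamma})$ in $B_{P(X^*)}\cap S(B_{X^*},y_i,\alpha_i)$ that $w^*$-converges to $x_i^*$. Indexing by the product directed set, $\sum_i\lambda_i z_{i,\gamma}\to\sum_i\lambda_i x_i^*$ in the $w^*$-topology. Each term of this net lies in $B[f(x_0^*),\varepsilon]$. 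The closed ball is $w^*$-closed, since the norm of $X^*$ is $w^*$-lower semicontinuous. Hence
\[
T\subset B[f(x_0^*),\varepsilon]\subset B(f(x_0^*),2\varepsilon),
\]
so $x_0^*$ is a semi $w^*$-SCS point of $B_{X^*}$.

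I expect the only delicate point to be bookkeeping: making precise that $x_0^*$ means $f(x_0^*)=P(\text{extension})$, and that points of $P(X^*)$ in the $X^*$-slices correspond exactly to points of the $Y^*$-slices. The density/closure argument itself is routine. The same proof with $n=1$, or with a basic $w^*$-open set in place of the slices, gives the semi $w^*$-denting and semi $w^*$-PC analogues.
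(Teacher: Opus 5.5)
Your proof is correct and follows the same route as the paper. Both arguments keep the same slice data $y_i,\alpha_i$ in $X^*$ and use the $w^*$-density of $B_{P(X^*)}\cong B_{Y^*}$ in $B_{X^*}$. Your net argument, which uses $w^*$-lower semicontinuity of the norm to land in $B[f(x_0^*),\varepsilon]$, is in fact the correct form of the paper's step $\sum_i\lambda_i\overline{S_i}\subseteq\sum_i\lambda_i S_i$; that inclusion holds only after taking a $w^*$-closure on the right-hand side.
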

\begin{proof}
    Let $\varepsilon >0$ and $x_0^*$ is a semi $w^*$-SCS point of $B_{Y^*}$. Then there exists a convex combination of $w^*$-slices $S= \sum_{i=1}^n \lambda_i S_i$, $S_i=S(B_{Y^*}, y_i, \alpha_i)$ of $B_{Y^*}$ such that $S \subset B(x_0^*, \varepsilon)$, where $y_i^* \in S_{Y^*},$ for all $i=1,2, \cdots n$, $\alpha_i >0$, $0 < \lambda_i \leq 1$ with $\sum_{i=1}^n \lambda_i =1$. Since $Y$ is a strict ideal in $X$, we have $B_{X^*}=\overline{B_{Y^*}}^{w^*}.$ Therefore,
    \begin{align}
        \overline{S_i} & = \{x^* \in B_{X^*} : x^*(y_i) > 1-\alpha_i \} \\
        & = \{x^* \in \overline{B_{Y^*}}^{w^*}: x^*(y_i) >1-\alpha_i \}.
    \end{align}
    Thus, $\sum_{i=1}^n \lambda_i \overline{S_i} \subseteq \sum_{i=1}^n \lambda_i S_i \subset B(x_0^*, \varepsilon).$ This proves that $x_0^*$ is a semi $w^*$-SCS point of $B_{X^*}$.
\end{proof}
By repeating the preceding argument for $n=1$, we obtain 
\begin{corollary}
    Let $Y$ be a strict ideal of $X.$ If $x_0^* $ is a semi $w^*$-denting point of $B_{Y^*}$, then $x_0^*$ is a semi $w^*$-denting point of $B_{X^*}.$
\end{corollary}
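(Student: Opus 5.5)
We specialize the proof of the preceding theorem to a single slice, where the convex combination collapses. Fix $\varepsilon>0$. Since $x_0^*$ is a semi $w^*$-denting point of $B_{Y^*}$, we choose $y\in S_Y$ and $\alpha>0$ with
\[
S:=S(B_{Y^*},y,\alpha)\subset B(x_0^*,\varepsilon).
\]
Here $B_{Y^*}$ is identified with $B_{P(X^*)}$ through the Hahn--Banach extension operator furnished by the norm one projection $P$. The candidate $w^*$-slice of $B_{X^*}$ is the one determined by the same vector $y$, now regarded as an element of $X$:
\[
\widetilde S:=\{x^*\in B_{X^*}: x^*(y)>1-\alpha\}.
\]
We note that $\sup_{B_{X^*}} y=\|y\|=1=\sup_{B_{Y^*}} y$, so $\widetilde S$ is a genuine $w^*$-slice of $B_{X^*}$ with the same parameters.

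The key step is to show that $\widetilde S\subset \overline{S}^{w^*}$. Take $x^*\in\widetilde S$. Since $Y$ is a strict ideal, $B_{P(X^*)}$ is $w^*$-dense in $B_{X^*}$, so there is a net $(x^*_\gamma)$ in $B_{P(X^*)}$ with $x^*_\gamma\to x^*$ in the $w^*$-topology. Evaluation at $y$ is $w^*$-continuous, so eventually $x^*_\gamma(y)>1-\alpha$. Hence eventually $x^*_\gamma$ lies in the slice $S$ (with the above identification), and therefore $x^*\in\overline{S}^{w^*}$.

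Next we pass from the $w^*$-closure to the norm ball. The norm-closed ball $B[x_0^*,\varepsilon]$ is $w^*$-closed, being a translate of a dual ball. It follows that
\[
\widetilde S\subset \overline{S}^{w^*}\subset B[x_0^*,\varepsilon]\subset B(x_0^*,2\varepsilon).
\]
Since $\varepsilon>0$ was arbitrary, $x_0^*$ is a semi $w^*$-denting point of $B_{X^*}$.

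The only delicate point is the identification of $Y^*$ inside $X^*$. One must make sure that the slice of $B_{Y^*}$ corresponds exactly to the points of $B_{P(X^*)}$ lying in $\widetilde S$, and that the norm distance to $x_0^*$ is the same whether it is computed in $Y^*$ or in $X^*$. Both follow because the extension operator is an isometry and the extensions agree with the original functionals on $Y$. Once this is settled, the inclusion of $\widetilde S$ into the $w^*$-closure of $S$ is exactly the $n=1$ case of the argument in the preceding theorem.
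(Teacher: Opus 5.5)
Your proof is correct and follows the paper's own route: the corollary is the $n=1$ case of the strict-ideal semi $w^*$-SCS theorem. In both, the $w^*$-density of $B_{P(X^*)}\cong B_{Y^*}$ in $B_{X^*}$ puts the $w^*$-slice of $B_{X^*}$ determined by the same $y$ inside the $w^*$-closure of the original slice. Your explicit use of the $w^*$-closedness of $B[x_0^*,\varepsilon]$ (equivalently, $w^*$-lower semicontinuity of the norm, as the paper uses in its strict-ideal semi $w^*$-PC result) is the step needed to finish. It also corrects the paper's overstated inclusion $\sum_i\lambda_i\overline{S_i}\subseteq\sum_i\lambda_i S_i$, which only holds with a $w^*$-closure on the right.
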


\begin{theorem}
	Let $Y$ be a strict ideal of $X.$ If $x_0^* $ is a semi $w^*$-PC of $B_{Y^*}$, then $x_0^* $ is a semi $w^*$-PC of $B_{X^*}.$ 
\end{theorem}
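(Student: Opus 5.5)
The plan mirrors the strict-ideal argument for semi $w^*$-SCS points. The one essential fact is that, since $Y$ is a strict ideal, the projection $P$ lets us regard $Y^*$ (as $P(X^*)$) isometrically inside $X^*$, and then $B_{Y^*}$ is $w^*$-dense in $B_{X^*}$, i.e.\ $B_{X^*}=\overline{B_{Y^*}}^{w^*}$. The only subtlety is that the $w^*$-topology on $B_{Y^*}$ is $\sigma(Y^*,Y)$, which is coarser than the trace of $\sigma(X^*,X)$. So a $w^*$-open set of $B_{Y^*}$ given by finitely many points of $Y$ is the trace of a $w^*$-open set of $B_{X^*}$ determined by the same points, now viewed in $X$.

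Fix $\varepsilon>0$ and take a $w^*$-open $U\subset B_{Y^*}$ with $U\subset B(x_0^*,\varepsilon)$. Shrink it to a nonempty basic set
\[
\hat U=\{y^*\in B_{Y^*}:|y^*(y_i)-y_0^*(y_i)|<\delta,\ 1\le i\le n\}\subset U,
\]
where $y_0^*\in U$, $y_1,\dots,y_n\in Y$ and $\delta>0$. Define the corresponding set in $X^*$:
\[
V=\{x^*\in B_{X^*}:|x^*(y_i)-y_0^*(y_i)|<\delta,\ 1\le i\le n\}.
\]
This is a nonempty $w^*$-open subset of $B_{X^*}$, since it contains $y_0^*$ and is defined by evaluations at points $y_i\in Y\subset X$. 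The claim is then that $V\subset B[x_0^*,\varepsilon]$, which suffices because $\varepsilon$ is arbitrary.

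To prove the claim, take $x^*\in V$. By $w^*$-density there is a net $(y_\alpha^*)$ in $B_{Y^*}$ with $y_\alpha^*\to x^*$ in $\sigma(X^*,X)$. Since the conditions defining $V$ are strict inequalities in finitely many coordinates, eventually $y_\alpha^*\in\hat U\subset B(x_0^*,\varepsilon)$. Hence $x^*$ lies in the $w^*$-closure of $B(x_0^*,\varepsilon)$, which is contained in $x_0^*+\varepsilon B_{X^*}$, because this ball is $w^*$-closed by $w^*$-lower semicontinuity of the dual norm. Thus $\|x^*-x_0^*\|\le\varepsilon$.

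The main point to check is the identification step: that the norm of $Y^*$ agrees with the norm it gets inside $X^*$, and that $x_0^*$ and $y_0^*$ sit in $B_{X^*}$. Both follow because $P$ is a norm-one projection with $\ker P=Y^\perp$, so it acts as a Hahn--Banach extension operator on $Y^*$. The rest is the density-plus-closure argument above, which is the same mechanism as in the SCS case with slices replaced by basic $w^*$-neighbourhoods.
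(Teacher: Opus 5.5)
Your proposal is correct and follows the paper's proof. You transfer the basic $w^*$-neighbourhood defined by $y_1,\dots,y_n\in Y$ to $B_{X^*}$, place each of its points in the $w^*$-closure of the small set in $B_{Y^*}$, and finish with $w^*$-lower semicontinuity of the norm to get containment in $B[x_0^*,\varepsilon]$. The only difference is that the paper imports the inclusion $V\subset\overline{\hat U}^{w^*}$ from an earlier reference, while you prove it directly from the $w^*$-density net and the isometric identification $Y^*\cong P(X^*)$, which makes your argument self-contained.
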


\begin{proof}
	Let $\varepsilon > 0$ and $x_0^* $ is a semi $w^*$-PC of $B_{Y^*}$. Then there exists a relatively $w^*$-open subset, say $U=\{y^* \in B_{Y^*} : \vert y^*(y_i) - y_0^*(y_i)\vert < \delta , 1\leq i \leq n\}$ of $B_{Y^*}$ such that $U \subset B(x_0^*, \varepsilon),$ where $y_i \in B_Y$, $1 \leq i \leq n$.
	Consider a relatively $w^*$-open subset $\hat{U}= \{x^* \in B_{X^*} : \vert x^*(y_i) - y_0^*(y_i)\vert < \delta , 1 \leq i \leq n\}$ of $B_{X^*}$.
	  Following the arguments as in \cite[Proposition 2.10]{BS1}, we find that $\hat{U} \subset \overline{U}^{w^*} \subset \overline{B(x_0^*, \varepsilon)}^{w^*}$. Now, let us take $x^* \in \hat{U}$, then $x^* \in \overline{B(x_0^*, \varepsilon)}^{w^*}$. i.e., there exists a net $(x_\lambda^*)$ in $B(x_0^*, \varepsilon)$ such that $x_\lambda^* \xrightarrow{w^*} x^*$. This further implies that $x_\lambda^* - a^* \xrightarrow{w^*} x^* - x_0^*$. Finally, we get that
    $$
    \Vert x^* - x_0^* \Vert \leq \liminf_{\lambda} \Vert x_\lambda^* - x_0^* \Vert < \varepsilon.
    $$
    So, $\Vert x^* - x_0^* \Vert \leq \varepsilon < 2 \varepsilon,$ showing that $x^* \in B(x_0^*, 2 \varepsilon)$. Thus, $\hat{U} \subset B(x_0^*, 2\varepsilon).$ This proves that $x_0^*$ is a semi $w^*$-PC of $B_{X^*}$. 	
\end{proof} 

The following proposition is crucial for the upcoming theorem.
\begin{proposition} \label{ai}
\cite{ALN}  A subspace $Y$ of $X$ is an $ai$-ideal if and only if there exists a Hahn-Banach extension operator $f: Y^* \rightarrow  X^*$ such that for every $\varepsilon > 0$ and every finite dimensional subspace $F\subset X,$ finite dimensional subspace $F^*\subset Y^*$, there exists $T: F \rightarrow Y$ such that the following conditions hold:
\begin{enumerate}
\item $Tu = u $, $\forall u \in F\bigcap Y;$
\item $\frac{1}{1+\varepsilon} \Vert u \Vert \leqslant \Vert Tu \Vert \leqslant (1+\varepsilon) \Vert u \Vert$, $\forall u \in F;$
\item $fy^*(u) = y^*(Tu)$, $\forall u \in F$ and $y^* \in F^*.$  
\end{enumerate}  
\end{proposition}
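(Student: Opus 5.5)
The statement is taken from \cite{ALN}; the plan is to prove it directly by a compactness (ultrafilter) argument followed by a finite-dimensional perturbation.

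The ``if'' direction is immediate: conditions (1) and (2) are exactly the definition of an $ai$-ideal, with $F^*=\{0\}$. For the ``only if'' direction, let $Y$ be an $ai$-ideal. Let $\Lambda$ be the set of pairs $\lambda=(F,\varepsilon)$, with $F\subset X$ finite dimensional and $\varepsilon>0$. Order $\Lambda$ by $(F,\varepsilon)\le(F',\varepsilon')$ iff $F\subset F'$ and $\varepsilon'\le\varepsilon$. For each $\lambda$ pick $T_\lambda:F\to Y$ as in the definition. Fix a free ultrafilter $\mathcal U$ on $\Lambda$ that refines the order filter of tails. For $y^*\in Y^*$ and $x\in X$ set
\[
(fy^*)(x)=\lim_{\mathcal U} y^*(T_\lambda x).
\]
This limit is well defined, since $x\in F_\lambda$ for $\mathcal U$-almost all $\lambda$ and $|y^*(T_\lambda x)|\le(1+\varepsilon_\lambda)\|y^*\|\|x\|$ is bounded.

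Next verify that $f$ is a Hahn--Banach extension operator. For linearity in $x$, note that for given $x,x'$ the map $T_\lambda$ is linear on an $F_\lambda$ containing both, $\mathcal U$-eventually; linearity in $y^*$ is clear. The bound $\|fy^*\|\le\lim_{\mathcal U}(1+\varepsilon_\lambda)\|y^*\|=\|y^*\|$ holds because $\varepsilon_\lambda\to0$ along $\mathcal U$. Finally $(fy^*)|_Y=y^*$ because $T_\lambda y=y$ once $y\in F_\lambda$. Together these give $\|fy^*\|=\|y^*\|$.

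Now fix $\varepsilon>0$, $F\subset X$ and $F^*\subset Y^*$ finite dimensional, and let $u_1,\dots,u_m$ be a basis of $F$. Choose $y_1^*,\dots,y_k^*$ so that their restrictions to $Y$ form a basis of $F^*$; this is possible since $F^*\subset Y^*$ already. Since these are linearly independent functionals on $Y$, choose $y_1,\dots,y_k\in Y$ with $y_j^*(y_l)=\delta_{jl}$, and put $C=\sum_l\|y_l\|$. Given $\eta>0$, the ultrafilter limits allow us to pick one $\lambda$ with $F\subset F_\lambda$, $\varepsilon_\lambda<\eta$, and
\[
|y_j^*(T_\lambda u_i)-fy_j^*(u_i)|<\eta \quad\text{for all } i,j.
\]
By equivalence of norms on $F$, this gives $|fy_j^*(u)-y_j^*(T_\lambda u)|\le K\eta\|u\|$ for $u\in F$, with $K$ depending only on $F$ and its basis.

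The hard part is to make (3) hold exactly rather than approximately. Define $S:F\to Y$ by
\[
Su=\sum_{l=1}^k\bigl(fy_l^*(u)-y_l^*(T_\lambda u)\bigr)y_l ,
\]
and set $T=T_\lambda|_F+S$. Then:
\begin{itemize}
\item By biorthogonality, $y_j^*(Tu)=fy_j^*(u)$, and by linearity $fy^*(u)=y^*(Tu)$ for all $y^*\in F^*$.
\item For $u\in F\cap Y$ each coefficient vanishes, because $fy_l^*(u)=y_l^*(u)=y_l^*(T_\lambda u)$. Hence $Tu=u$, which is (1).
\item $\|S\|\le CK\eta$, so
\[
\Bigl(\frac{1}{1+\eta}-CK\eta\Bigr)\|u\|\le\|Tu\|\le(1+\eta+CK\eta)\|u\| .
\]
Choosing $\eta$ small relative to $\varepsilon$, $C$ and $K$ gives (2).
\end{itemize}

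The steps that need care are that $f$ is fixed before $(\varepsilon,F,F^*)$ are chosen, which the ultrafilter handles, and that the correction $S$ both preserves (1) and is small. Smallness uses only that $F^*$ is finite dimensional and that $K$ and $C$ do not depend on $\eta$.
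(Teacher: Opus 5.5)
Your proof is correct. The ultrafilter limit $fy^*(x)=\lim_{\mathcal U}y^*(T_\lambda x)$ gives a linear Hahn--Banach extension operator that is fixed before $(\varepsilon,F,F^*)$ are chosen. The biorthogonal finite-rank correction $Su=\sum_l\bigl(fy_l^*(u)-y_l^*(T_\lambda u)\bigr)y_l$ makes (3) exact, vanishes on $F\cap Y$, and has norm at most $CK\eta$ with $C$ and $K$ independent of $\eta$, so (1) and (2) survive.

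The paper gives no proof of this proposition; it simply quotes it from \cite{ALN}. Your route — a limit construction of $f$ followed by a small biorthogonal perturbation, as in proofs of the principle of local reflexivity — is, as far as I recall, the standard argument behind that result. The only slip is cosmetic: the remark about ``restrictions to $Y$'' is unnecessary, since you just need a basis $y_1^*,\dots,y_k^*$ of $F^*\subset Y^*$.
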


\begin{theorem}\label{semi SCS ai ideal}
If $Y$ is an $ai$-ideal in $X$ and $x_0 $ is a semi SCS point of $B_Y$, then $x_0 $ is a semi SCS point of $B_X$.
\end{theorem}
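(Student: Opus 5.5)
We will use the Hahn--Banach extension operator $f\colon Y^*\to X^*$ from Proposition \ref{ai} to transport slices of $B_Y$ to slices of $B_X$, and the local almost-isometries $T$ to pull points of these $X$-slices back into slightly enlarged slices of $B_Y$.

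Let $\varepsilon>0$. Since $x_0$ is a semi SCS point of $B_Y$, we have a convex combination $S=\sum_{i=1}^n\lambda_i S(B_Y,y_i^*,\alpha_i)$ with $y_i^*\in S_{Y^*}$ and $S\subset B(x_0,\varepsilon)$. Put $x_i^*:=fy_i^*\in S_{X^*}$; these extend $y_i^*$ and have norm one. Consider the convex combination of slices of $B_X$
\[
\widetilde S=\sum_{i=1}^n\lambda_i S(B_X,x_i^*,\beta_i),\qquad \beta_i=\alpha_i/2 .
\]
Let $\sum_i\lambda_i u_i\in\widetilde S$ with $u_i\in S(B_X,x_i^*,\beta_i)$. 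Let $F=\mathrm{span}\{x_0,u_1,\dots,u_n\}$ and $F^*=\mathrm{span}\{y_1^*,\dots,y_n^*\}$, and take $T\colon F\to Y$ as in Proposition \ref{ai} with a small parameter $\eta>0$. Then $Tx_0=x_0$. Moreover, $y_i^*(Tu_i)=fy_i^*(u_i)=x_i^*(u_i)>1-\beta_i$, and $\|Tu_i\|\le 1+\eta$. Set $v_i=Tu_i/(1+\eta)\in B_Y$. Then
\[
y_i^*(v_i)>\frac{1-\beta_i}{1+\eta}>1-\alpha_i
\]
once $\eta$ is small relative to $\min_i\alpha_i$; this choice does not depend on the $u_i$. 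Hence $\sum_i\lambda_iv_i\in S\subset B(x_0,\varepsilon)$.

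It remains to transfer this estimate back to $X$, and this is the main step. By linearity,
\[
T\Bigl(\sum_i\lambda_iu_i-x_0\Bigr)=(1+\eta)\sum_i\lambda_iv_i-x_0 ,
\]
whose norm is at most $\|\sum_i\lambda_iv_i-x_0\|+\eta<\varepsilon+\eta$. The lower bound in condition (2) then gives
\[
\Bigl\|\sum_i\lambda_iu_i-x_0\Bigr\|\le(1+\eta)\Bigl\|T\Bigl(\sum_i\lambda_iu_i-x_0\Bigr)\Bigr\|<(1+\eta)(\varepsilon+\eta)<2\varepsilon .
\]
Thus $\widetilde S\subset B(x_0,2\varepsilon)$, and $x_0$ is a semi SCS point of $B_X$.

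The delicate point is that $T$ depends on the chosen points $u_i$, so $F$ must contain $x_0$ as well as all the $u_i$. All constants (the $\beta_i$ and $\eta$) must be fixed before the $u_i$ are chosen, which the argument above does. The same scheme with $n=1$ gives the result for semi denting points.
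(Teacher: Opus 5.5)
Your proof is correct and follows essentially the paper's argument. You extend the slice functionals by the Hahn--Banach extension operator $f$ and apply the local almost isometry $T$ on a finite-dimensional $F$ containing $x_0$ and the chosen points; condition (3) then lands you in the original combination of slices of $B_Y$, and the lower bound in (2) pulls the estimate back to $X$. The only difference is cosmetic: the paper keeps the widths $\alpha_i$ and rescales the points in $X$ by a point-dependent factor $1/(1+\alpha)$, whereas you halve the widths and rescale in $Y$ by a factor $1/(1+\eta)$ fixed in advance, so your constants are uniform (note that $\eta$ must also be small relative to $\varepsilon$, not only to $\min_i\alpha_i$, for the final bound $<2\varepsilon$).
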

\begin{proof}
Let $\varepsilon >0$ and $x_0$ be a semi SCS point of $B_Y$. Then there exists a convex combination of slices, $S=\sum_{i=1}^{n} \lambda_i S(B_Y,y_i^*,\alpha_i)$ of $B_Y$ such that $S \subset B(x_0, \varepsilon),$ where  $i=1,2,...,n$, $y_i^*\in S_{Y^*}$, $\alpha_i > 0$, $1 \geq \lambda_i > 0$ with $\sum_{i=1}^{n} \lambda_i =1.$
Since $Y$ is an $ai$-ideal in $X,$ we can guarantee a Hahn Banach extension operator, $f: Y^* \rightarrow  X^*$ satisfying the conditions of Proposition $\ref{ai}$.
Consider a convex combination of slices $\hat{S}=\sum_{i=1}^{n} \lambda_i S(B_X,fy_i^*,\alpha_i)$ of $B_X$ and $x=\sum_{i=1}^{n} \lambda_i x_i \in \hat{S}.$
Now we can choose $0<\alpha<1$  such that $\frac{x_i}{1+\alpha} \in S(B_X,fy_i^*,\alpha_i)$, $\forall i=1,2,...,n.$
Put $x'=\sum_{i=1}^{n} \lambda_i \frac{x_i}{1+\alpha}=\frac{x}{1+\alpha} \in \hat{S}.$
Put $F= {\rm span} \{\frac{x_1}{1+\alpha},\frac{x_2}{1+\alpha},...\frac{x_n}{1+\alpha}, x_0 \}$ and $F^*= {\rm span} \{y_1^*,y_2^*,...y_n^*\}.$
For $F,F^*$ and $\alpha$, there exists a mapping $T:F\rightarrow Y$ satisfying the conditions in Proposition $\ref{ai}.$ %Now, $Tx',T\tilde{x}'\in Y.$
Since $x' \in F$, $\Vert Tx'\Vert\leqslant (1+\alpha)\Vert x'\Vert\leqslant 1.$ \\
Also, $\forall i=1,2,...,n,$ by Proposition $\ref{ai}$ (iii),
$$ y_i^* \bigg (T\frac{x_i}{1+\alpha} \bigg )=fy_i^* \bigg (\frac{x_i}{1+\alpha} \bigg )>1-\alpha_i,$$
which implies $Tx'\in S.$ This implies $\Vert T x' - x_0 \Vert < \varepsilon.$ Now,
\begin{align*} 
    \Vert x' - x_0 \Vert & \leq (1+\alpha) \Vert T x' - x_0 \Vert \\
    & \leq (1+\alpha) \Vert T(x'- x_0) \Vert \\
    &=(1+\alpha) \Vert T x'- T x_0 \Vert \\
    & < (1+\alpha) \varepsilon.
\end{align*}
Finally, 
\begin{align*}
    \Vert x- x_0 \Vert & \leq \Vert x'- x_0 \Vert + \alpha  < (1+\alpha) \varepsilon + \alpha = \varepsilon^{'} ~ \textit{(say)}.
\end{align*}   
This shows that $x \in B(x_0, \varepsilon),$ which proves that $x_0 $ is a semi SCS point of $B_X$.
\end{proof}
By repeating the preceding argument for $n=1$, we obtain 
\begin{corollary}\label{bdp ai ideal}
If $Y$ is an $ai$-ideal in $X$ and $ x_0$ is a semi denting point of $B_Y$, then $x_0 $ is a semi denting point of $B_X$.
\end{corollary}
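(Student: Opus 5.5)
The plan is to specialize the proof of Theorem \ref{semi SCS ai ideal} to a single slice. Fix $\varepsilon>0$. Since $x_0$ is a semi denting point of $B_Y$, there are $y^*\in S_{Y^*}$ and $\beta>0$ with $S(B_Y,y^*,\beta)\subset B(x_0,\varepsilon)$. Because $Y$ is an $ai$-ideal, Proposition \ref{ai} gives a Hahn--Banach extension operator $f:Y^*\to X^*$ with the properties (1)--(3). Since $\|fy^*\|=\|y^*\|=1$, the set $\hat S:=S(B_X,fy^*,\beta)$ is a genuine slice of $B_X$. We will show that $\hat S\subset B(x_0,\varepsilon')$, where $\varepsilon'\to 0$ as $\varepsilon\to 0$.

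Take $x\in\hat S$, so $fy^*(x)>1-\beta$. Choose $0<\alpha<1$, depending on $x$, such that $x':=x/(1+\alpha)$ still satisfies $fy^*(x')>1-\beta$; this is possible by strictness of the inequality. We may also take $\alpha<\varepsilon$. Let $F=\operatorname{span}\{x',x_0\}$ and $F^*=\operatorname{span}\{y^*\}$, and let $T:F\to Y$ be the map from Proposition \ref{ai} with parameter $\alpha$. Then $\|Tx'\|\le(1+\alpha)\|x'\|\le 1$, so $Tx'\in B_Y$. Condition (3) gives $y^*(Tx')=fy^*(x')>1-\beta$, hence $Tx'\in S(B_Y,y^*,\beta)$ and therefore $\|Tx'-x_0\|<\varepsilon$.

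The main step is transferring this estimate back to $x'$. Since $x_0\in F\cap Y$, condition (1) gives $Tx_0=x_0$. Then, by the lower bound in (2),
\[\|x'-x_0\|\le(1+\alpha)\|T(x'-x_0)\|=(1+\alpha)\|Tx'-x_0\|<(1+\alpha)\varepsilon.\]
Finally, $\|x-x'\|=\tfrac{\alpha}{1+\alpha}\|x\|\le\alpha$, so $\|x-x_0\|<(1+\alpha)\varepsilon+\alpha<2\varepsilon+\varepsilon^2$.

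This bound is uniform in $x\in\hat S$, because the dependence on $x$ enters only through $\alpha<\varepsilon$. Hence $\hat S\subset B(x_0,3\varepsilon)$ for small $\varepsilon$, and $x_0$ is a semi denting point of $B_X$. The only subtle point is choosing $\alpha$ depending on $x$ while keeping $\alpha<\varepsilon$ so the final bound does not depend on $x$; everything else is routine.
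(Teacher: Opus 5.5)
Your proposal is correct and follows the paper's route exactly: the paper obtains this corollary by running the proof of Theorem \ref{semi SCS ai ideal} with $n=1$. That argument extends the slice functional via the Hahn--Banach extension operator $f$, rescales $x$ to $x'=x/(1+\alpha)$, applies $T$ on $F=\operatorname{span}\{x',x_0\}$, and pulls the estimate back using $Tx_0=x_0$ and the lower bound in (2). Your explicit choice $\alpha<\varepsilon$ is a worthwhile addition, because it makes the final radius $(1+\alpha)\varepsilon+\alpha$ independent of $x$, a point the paper leaves implicit.
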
 
\begin{theorem} \label{bhp ai ideal}
If $Y$ is an $ai$-ideal in $X$ and $x_0 $ is a semi PC of $B_Y$, then $x_0 $ is a semi PC of $B_X$.
\end{theorem}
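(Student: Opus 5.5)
We follow the scheme of Theorem \ref{semi SCS ai ideal}, replacing the convex combination of slices by a relatively weakly open set. Fix $\varepsilon>0$. Since $x_0$ is a semi PC of $B_Y$, there is a nonempty relatively weakly open subset of $B_Y$ contained in $B(x_0,\varepsilon)$. Shrinking it, we may take it to be a finite intersection of slices,
$$U=\bigcap_{i=1}^n S(B_Y,y_i^*,\alpha_i)\subset B(x_0,\varepsilon),\qquad y_i^*\in S_{Y^*},\ \alpha_i>0.$$
This reduction is legitimate because every nonempty relatively weakly open subset of a ball contains a nonempty finite intersection of slices (Bourgain's lemma). Let $f:Y^*\to X^*$ be the Hahn--Banach extension operator from Proposition \ref{ai}. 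The candidate weakly open subset of $B_X$ is
$$\hat U=\bigcap_{i=1}^n S(B_X,fy_i^*,\alpha_i).$$
It is nonempty: since $fy_i^*|_Y=y_i^*$ and $\|fy_i^*\|=1$, any point of $U$ lies in $\hat U$. It is relatively weakly open in $B_X$, being a finite intersection of slices.

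Now take an arbitrary $x\in\hat U$. Because the finitely many strict inequalities $fy_i^*(x)>1-\alpha_i$ are open conditions, we can choose $0<\alpha<1$ small enough that $x':=\frac{x}{1+\alpha}$ still satisfies $fy_i^*(x')>1-\alpha_i$ for all $i$. Set $F={\rm span}\{x',x_0\}$ and $F^*={\rm span}\{y_1^*,\ldots,y_n^*\}$, and let $T:F\to Y$ be the operator given by Proposition \ref{ai} for $\alpha$. Then:
\begin{itemize}
\item $\|Tx'\|\le(1+\alpha)\|x'\|\le 1$, so $Tx'\in B_Y$;
\item by condition (iii) of Proposition \ref{ai}, $y_i^*(Tx')=fy_i^*(x')>1-\alpha_i$ for every $i$.
\end{itemize}
Hence $Tx'\in U$, and so $\|Tx'-x_0\|<\varepsilon$.

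Since $x_0\in F\cap Y$, we have $Tx_0=x_0$. The lower bound in condition (ii) then gives
$$\|x'-x_0\|\le(1+\alpha)\|T(x'-x_0)\|=(1+\alpha)\|Tx'-x_0\|<(1+\alpha)\varepsilon.$$
Moreover $\|x-x'\|=\alpha\|x'\|\le\alpha$. Combining,
$$\|x-x_0\|<(1+\alpha)\varepsilon+\alpha<2\varepsilon+\alpha.$$
The parameter $\alpha$ may depend on $x$, but it can be taken arbitrarily small (for instance $\alpha\le\varepsilon$), so $\|x-x_0\|<3\varepsilon$. This gives $\hat U\subset B(x_0,3\varepsilon)$, and as $\varepsilon>0$ was arbitrary, $x_0$ is a semi PC of $B_X$.

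The main obstacle is the first step: passing from an arbitrary weakly open set to an intersection of slices $S(B_Y,y_i^*,\alpha_i)$ that is still nonempty and contained in $B(x_0,\varepsilon)$. Bourgain's lemma gives this directly, but it actually yields a convex combination of slices rather than an intersection. So it is simpler to work with a basic neighbourhood instead,
$$\{y\in B_Y:|y_i^*(y-\hat y)|<\delta\},$$
and write each two-sided condition as the intersection of the slices determined by $y_i^*$ and $-y_i^*$, exactly as in the proof of Theorem \ref{thmpc}. The extension step above is unchanged: apply $f$ to each functional $\pm y_i^*$, and use condition (iii) for both signs, which is immediate because $f$ is linear and $F^*$ is a subspace.
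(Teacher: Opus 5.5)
Your proof is correct and follows essentially the same route as the paper: you lift the basic weakly open set through the Hahn--Banach extension operator $f$ of Proposition \ref{ai}, pull the shrunk point $x/(1+\alpha)$ back into $U$ with the local operator $T$, and conclude using $Tx_0=x_0$ together with the lower bound in (ii). The differences are minor. Because you use one-sided conditions $fy_i^*(x)>c_i$, you do not need to put the centre of the neighbourhood into $F$, as the paper does. Taking $\alpha\le\varepsilon$ makes the final bound uniform in $x$. Finally, the initial reduction is not Bourgain's lemma; it holds because slices of arbitrary depth form a subbasis for the relative weak topology of $B_Y$, which is exactly what your closing paragraph supplies.
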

\begin{proof}
Let $\varepsilon > 0$ and $x_0 $ is a semi PC of $B_Y$. Then there exists a relatively weakly open subset $U=\{y\in B_Y : \vert y_i^*(y-y_0)\vert <\delta , 1 \leq i \leq n\}$, of $B_Y$ such that $U \subset B(x_0, \varepsilon),$ where $y_0\in B_Y$, $\delta> 0$, $y_i^*\in Y^*$, $\forall i=1,2,...,n$. Since $Y$ is an $ai$-ideal in $X,$ we can guarantee a Hahn-Banach extension operator $f : Y^* \rightarrow  X^*$ as in Proposition $\ref{ai}$.
Consider a nonempty relatively weakly open subset $\hat{U}=\{x\in B_X : \vert fy_i^*(x-y_0)\vert <\delta , 1 \leq i \leq n\}$ of $B_X$ and let us take $x \in \hat{U}$.
%Here we use a similar technique as in \cite{ALN}.
 Choose  $0<\alpha<1$  such that 
$\tilde{x} = \frac{x}{1+\alpha} \in \hat{U}.$ Put $F= {\rm span} \{y_0,\tilde{x}, x_0 \}$ and $F^*= {\rm span} \{y_1^*,y_2^*,...y_n^*\}.$ 
For $F,F^*$ and $\alpha$ there exists $T:F\rightarrow Y$ as in Proposition $\ref{ai}.$ %and so $T\tilde{x_},T\tilde{y}\in Y.$
By  Proposition $\ref{ai}$ (ii), $\Vert T\tilde{x}\Vert\leqslant (1+\alpha)\Vert \tilde{x}\Vert\leqslant 1.$ \\
Also by Proposition $\ref{ai}$ (i) and (iii), we have for all $i=1,2,...,n,$
$$
\vert y_i^*(T\tilde{x} - y_0) \vert = \vert y_i^*(T\tilde{x} - Ty_0)\vert = \vert fy_i^*(\tilde{x}-y_0)\vert<\delta .
$$
Thus, $T\tilde{x}\in U \subset B(x_0, \varepsilon).$ Following the same arguments as in Theorem \ref{semi SCS ai ideal}, we can prove that $x_0 $ semi PC of $B_X$.
\end{proof}
%====================================================================
%projective tensor product
\section{projective tensor products of Banach Spaces} \label{tensor}

%In this section, we investigate conditions that characterize semi denting and semi SCS points in the unit ball of the projective tensor product $ \ X\widehat {\otimes}_{\pi} Y$.

\begin{theorem}\label{projective semi -scs}
 If $a$ is a semi denting point of $B_{X}$ and $b $ is a semi SCS point of $B_{Y}$, then $a \otimes b$ is a semi SCS point of $B_{X \widehat{\otimes}_{\pi} Y }$.
\end{theorem}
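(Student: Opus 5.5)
The plan is to build the convex combination of slices of $B_{X\widehat{\otimes}_\pi Y}$ from elementary functionals $x^*\otimes y_i^*\in L(X,Y^*)=(X\widehat{\otimes}_\pi Y)^*$. Here $x^*$ determines a small slice of $B_X$ near $a$, and the $y_i^*$ determine a small convex combination of slices of $B_Y$ near $b$.

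Fix $\varepsilon>0$. Since $a$ is a semi denting point of $B_X$, there are $x^*\in S_{X^*}$ and $\alpha>0$ with $S(B_X,x^*,\alpha)\subset B(a,\varepsilon)$. Since $b$ is a semi SCS point of $B_Y$, there are $y_i^*\in S_{Y^*}$, $\beta_i>0$ and $\lambda_i$ with $\sum_{i=1}^n\lambda_i S(B_Y,y_i^*,\beta_i)\subset B(b,\varepsilon)$. Each $T_i:=x^*\otimes y_i^*$ has norm one, with $\sup T_i(B_{X\widehat\otimes_\pi Y})=1$. Choose $\eta>0$ so small that $\sqrt{\eta}<\min\{\alpha,\beta_1,\dots,\beta_n,\varepsilon\}$, and consider
\[
\hat S=\sum_{i=1}^n\lambda_i S(B_{X\widehat\otimes_\pi Y},T_i,\eta).
\]
Each slice is relatively norm open in the ball, and $B_{X\widehat\otimes_\pi Y}=\overline{\rm conv}(S_X\otimes S_Y)$. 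Hence it suffices to estimate points $u=\sum_i\lambda_iu_i$ in which every $u_i\in S(B,T_i,\eta)$ is a finite convex combination $u_i=\sum_k\mu_{ik}\,x_{ik}\otimes y_{ik}$ with $x_{ik}\in S_X$, $y_{ik}\in S_Y$. Such points are norm dense in $\hat S$, so an estimate $\|u-a\otimes b\|_\pi<C\varepsilon$ for them transfers to all of $\hat S$ with a closed ball.

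The key step is a mass-counting argument inside each $u_i$. Write $t_k=x^*(x_{ik})\,y_i^*(y_{ik})\in[-1,1]$. Replacing $x_{ik}\otimes y_{ik}$ by $(-x_{ik})\otimes(-y_{ik})$ where necessary, we may assume $x^*(x_{ik})\ge0$. Since $\sum_k\mu_{ik}t_k>1-\eta$, the set $G_i$ of indices with $t_k>1-\sqrt\eta$ has weight $\sum_{k\notin G_i}\mu_{ik}<\sqrt\eta$. For $k\in G_i$ both factors exceed $1-\sqrt\eta$, so $x_{ik}\in S(B_X,x^*,\alpha)\subset B(a,\varepsilon)$ and $y_{ik}\in S(B_Y,y_i^*,\beta_i)$. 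Then
\[
\Big\|u_i-a\otimes\sum_{k\in G_i}\mu_{ik}y_{ik}\Big\|_\pi\le\sum_{k\in G_i}\mu_{ik}\|x_{ik}-a\|+\sum_{k\notin G_i}\mu_{ik}\cdot 1<\varepsilon+\sqrt\eta .
\]
Set $v_i:=\big(\sum_{k\in G_i}\mu_{ik}\big)^{-1}\sum_{k\in G_i}\mu_{ik}y_{ik}$. This point lies in the convex set $S(B_Y,y_i^*,\beta_i)$ and differs from $\sum_{k\in G_i}\mu_{ik}y_{ik}$ by at most $\sqrt\eta$. Therefore $\|u_i-a\otimes v_i\|_\pi<\varepsilon+2\sqrt\eta$.

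Finally, $\sum_i\lambda_i a\otimes v_i=a\otimes w$ with $w=\sum_i\lambda_iv_i\in\sum_i\lambda_iS(B_Y,y_i^*,\beta_i)\subset B(b,\varepsilon)$. Hence
\[
\|u-a\otimes b\|_\pi\le\sum_i\lambda_i\|u_i-a\otimes v_i\|_\pi+\|a\|\,\|w-b\|<3\varepsilon+2\sqrt\eta<5\varepsilon .
\]
Since $\varepsilon$ is arbitrary, $a\otimes b$ is a semi SCS point of $B_{X\widehat\otimes_\pi Y}$.

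The main obstacle is the step above: a point of a slice of the projective ball is not an elementary tensor, so the slice cannot be split directly. The plan handles this by the weight estimate $\sum_{k\notin G_i}\mu_{ik}<\sqrt\eta$ together with the sign normalization. The subsidiary technical points are the density reduction to finite representations and the convexity of slices, which makes the $v_i$ land in the right slices. Both seem routine.
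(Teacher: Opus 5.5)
Your proposal is correct and takes essentially the same approach as the paper. Both use the slices $S(B_{X\widehat{\otimes}_\pi Y},x^*\otimes y_i^*,\delta^2)$ (your $\eta$), split each $u_i$ into the terms with $x^*(v_{i,j})\,y_i^*(w_{i,j})>1-\delta$ (total weight $>1-\delta$) and the rest, and finish with the same triangle inequality. Your sign normalization $x^*(x_{ik})\ge 0$ and the density reduction spell out two points the paper passes over: in particular, the paper's inequality $y_i^*\bigl(\sum_{j\in Q_i}\lambda_{i,j}w_{i,j}\bigr)\ge (x^*\otimes y_i^*)\bigl(\sum_{j\in Q_i}\lambda_{i,j}v_{i,j}\otimes w_{i,j}\bigr)$ only holds after that normalization.
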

\vspace{-0.19in}

\begin{proof}
    Let $\varepsilon >0$, $a $ be a semi denting point of $B_{X}$ and $b $ be a semi SCS point of $B_{Y}$. Then there exists a slice $S(B_{X}, x^*, \delta_1)$ of $B_{X}$ such that
    $$
    S(B_{X}, x^*, \delta_1) \subset B(a, \varepsilon).
    $$
    Also, there exists a collection $\{S(B_{Y}, y_i^*, \delta_{2, i})\}_{i=1}^n$ of slices of $B_{Y}$ and $\{\lambda_i\}_{i=1}^n \subseteq \R$, for some $n\in \N$, such that $\lambda_i \geq 0$ for all $i$ and $\sum_{i=1}^n\lambda_i=1$ satisfying
    $$ \sum_{i=1}^n \lambda_i S(B_{Y}, y_i^*, \delta_{2, i}) \subset B(b, \varepsilon).$$
    
    For each $1 \leq i \leq n$, consider the slice $T_i=S(B_{X \widehat{\otimes}_{\pi} Y}, x^* \otimes y_i^*, \delta^2)$ of $B_{X \widehat{\otimes}_{\pi} Y}$, where $0 < 2 \delta < \min \{ \varepsilon, \delta_1, \delta_{2, i} \}_{i=1}^n.$ Let $u_i \in T_i \cap \text{co} (B_{X} \otimes B_{Y})$. Then $u_i=\sum_{j=1}^{m_i} \lambda_{i, j} (v_{i,j} \otimes w_{i,j})$, where $v_{i,j} \in B_{X}$, $w_{i,j} \in B_{Y}$, $\lambda_{i,j} \geq 0$ for all $1 \leq j \leq m_i$ and $\sum_{i=1}^{m_i} \lambda_{i,j}=1$. Denote 
    $$
    Q_i=\{ 1 \leq j \leq m_i : x^*(v_{i,j}) y_i^*(w_{i,j}) > 1-\delta \},
    $$
    $$
    Q_i^c= \{1, \cdots, m_i \}\setminus Q_i.
    $$
    Then 
    \begin{align*}
        1 - \delta^2 & < (x^* \otimes y_i^* ) (u_i) \\
        & = \sum_{j \in Q_i} \lambda_{i, j} x^*(v_{i,j}) y_i^* (w_{i,j}) + \sum_{j \in Q_i^c} \lambda_{i, j} x^*(v_{i,j}) y_i^* (w_{i,j}) \\
        & \leq \sum_{j \in Q_i} \lambda_{i, j} + (1-\delta) \sum_{j \in Q_i^c} \lambda_{i, j} \\
        & = 1-\sum_{j \in Q_i^c} \lambda_{i, j} + (1-\delta) \sum_{j \in Q_i^c} \lambda_{i, j} \\
        & = 1-\delta \sum_{j \in Q_i^c} \lambda_{i, j}.
    \end{align*}
    This implies that $\sum_{j \in Q_i^c} \lambda_{i, j} < \delta$ and $\sum_{j \in Q_i} \lambda_{i, j} > 1- \delta$. \\
    On the other hand
    \begin{align*}
        y_i^* \bigg ( \sum_{j \in Q_i} \lambda_{i, j} w_{i,j} \bigg ) & \geq (x^* \otimes y_i^* ) ( \sum_{j \in Q_i} \lambda_{i, j} v_{i,j} \otimes w_{i,j} ) \\
        & = \sum_{j \in Q_i} \lambda_{i, j} x^*(v_{i,j}) y_i^*(w_{i,j} ) \\
        & > (1-\delta)(1-\delta) \\
        & > 1 - 2 \delta > 1-\delta_{2, i}.
    \end{align*}
  So, $ \sum_{j \in Q_i} \lambda_{i, j} w_{i,j} \in S(B_{Y}, y_i^*, \delta_{2, i}) $. Therefore, we have $\sum_{i=1}^n \lambda_i \sum_{j \in Q_i} \lambda_{i, j} w_{i,j} \subseteq B(b, \varepsilon)$.
  %\subset B(b, \varepsilon).$ 
  Similarly, we have 
  \begin{align*}
      x^*(v_{i,j}) & \geq (x^* \otimes y_i^* ) ( \sum_{j \in Q_i} \lambda_{i, j} v_{i,j} \otimes w_{i,j} ) \\
        & = \sum_{j\in Q_i} \lambda_{i, j} x^*(v_{i,j}) y_i^*(w_{i,j} ) \\
      & >(1-\delta)^2 > 1-\delta_1,
  \end{align*} 
  which shows that $v_{i,j} \in S(B_{X}, x^*, \delta_1) \subset B(a, \varepsilon).$ Finally,
  \begin{align*}
      \Vert \sum_{i=1}^n \lambda_i u_i - a \otimes b  \Vert & \leq \Vert \sum_{i=1}^n \lambda_i u_i - \sum_{i=1}^n \lambda_i ( a \otimes \sum_{j \in Q_i} \lambda_{i, j} w_{i,j} ) \Vert \\
      & + \Vert \sum_{i=1}^n \lambda_i ( a \otimes \sum_{j \in Q_i} \lambda_{i, j} w_{i,j} ) - a \otimes b  \Vert \\
     & \leq \sum_{i=1}^n \Vert u_i - a \otimes \sum_{j \in Q_i} \lambda_{i, j} w_{i,j} \Vert \\
     & + \Vert \sum_{i=1}^n \lambda_i ( a \otimes \sum_{j \in Q_i} \lambda_{i, j} w_{i,j} ) - a \otimes b  \Vert .
  \end{align*}
  Let us now compute
  \begin{align*}
      \Vert u_i - a \otimes \sum_{j \in Q_i} \lambda_{i, j} w_{i,j} \Vert & \leq \Vert \sum_{j \in Q_i} \lambda_{i, j} v_{i,j} \otimes w_{i,j} - a \otimes \sum_{j \in Q_i} \lambda_{i, j} w_{i,j} \Vert + \Vert \sum_{j \in Q_i^c} \lambda_{i, j} v_{i,j} \otimes w_{i,j} \Vert \\
      & \leq \sum_{j \in Q_i} \lambda_{i, j} \Vert (v_{i,j} - a ) \otimes w_{i,j} \Vert + \sum_{j \in Q_i^c} \lambda_{i, j} \Vert v_{i,j} \Vert \Vert w_{i,j} \Vert \\
      & < \sum_{j \in Q_i} \lambda_{i, j} \varepsilon + \sum_{j \in Q_i^c} \lambda_{i, j} \\
      & < \varepsilon + \delta < 2 \varepsilon ,
  \end{align*}
  and
  \begin{align*}
      \Vert \sum_{i=1}^n \lambda_i ( a \otimes \sum_{j \in Q_i} \lambda_{i, j} w_{i,j} ) - a \otimes b  \Vert & = \Vert a \otimes \bigg (\sum_{i=1}^n \lambda_i \sum_{j \in Q_i} \lambda_{i, j} w_{i,j} - b \bigg ) \Vert \\
      & < \varepsilon .
  \end{align*}
  Therefore,
\[
      \Vert \sum_{i=1}^n \lambda_i u_i - a \otimes b  \Vert < 2 \varepsilon + \varepsilon = 3 \varepsilon.
      \]

  Hence, $\sum_{i=1}^n \lambda_i T_i \subset B(a \otimes b, 3 \varepsilon),$ which proves that $a \otimes b $ is a semi SCS point of $B_{X \widehat{\otimes}_{\pi} Y }$.
    \end{proof}

By repetition of the preceding argument for $n=1$, we obtain 
    \begin{corollary}\label{projective semi -denting}
%        Let $X$ and $Y$ be Banach spaces. 
If $a $ is a semi denting point of $B_{X}$ and $b $ is a semi denting point of $B_{Y}$, then $a \otimes b $ is a semi denting point of $B_{X \widehat{\otimes}_{\pi} Y }$.
    \end{corollary}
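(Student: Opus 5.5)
The plan is to specialize the proof of Theorem \ref{projective semi -scs} to a single slice, so that the convex combination of slices collapses to one slice of $B_{X \widehat{\otimes}_{\pi} Y}$.

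\emph{Choice of slice.} Fix $\varepsilon>0$ and choose slices with
\[
S(B_X,x^*,\delta_1)\subset B(a,\varepsilon), \qquad S(B_Y,y^*,\delta_2)\subset B(b,\varepsilon),
\]
where $x^*\in S_{X^*}$ and $y^*\in S_{Y^*}$. Take $0<2\delta<\min\{\varepsilon,\delta_1,\delta_2\}$ and set
\[
T=S\big(B_{X\widehat{\otimes}_\pi Y},\, x^*\otimes y^*,\, \delta^2\big).
\]
Since $\|x^*\otimes y^*\|=1$, this is a genuine slice.

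\emph{Reduction to finite convex combinations.} Because $B_{X\widehat{\otimes}_\pi Y}=\overline{\text{conv}}(B_X\otimes B_Y)$ and $T$ is relatively norm open, every point of $T$ is a norm limit of points of $T\cap \text{co}(B_X\otimes B_Y)$. It therefore suffices to bound $\|u-a\otimes b\|$ for
\[
u=\sum_j \lambda_j\, v_j\otimes w_j \in T,
\]
and pass to the closed ball at the end. This density step is used implicitly in Theorem \ref{projective semi -scs}; here I would state it explicitly.

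\emph{Splitting the indices.} Let $Q=\{j : x^*(v_j)\,y^*(w_j)>1-\delta\}$. The same averaging inequality as in Theorem \ref{projective semi -scs} gives $\sum_{j\notin Q}\lambda_j<\delta$. For $j\in Q$ we have $|x^*(v_j)|,|y^*(w_j)|\le 1$ and their product exceeds $1-\delta$, so both factors have the same sign. Replacing $(v_j,w_j)$ by $(-v_j,-w_j)$ when needed leaves $v_j\otimes w_j$ unchanged, so we may assume both factors are positive. Then
\[
x^*(v_j)>1-\delta>1-\delta_1, \qquad y^*(w_j)>1-\delta>1-\delta_2.
\]
Hence $\|v_j-a\|<\varepsilon$ and $\|w_j-b\|<\varepsilon$ for every $j\in Q$. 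The sign normalization is the one point the scs proof handles loosely, and it is worth stating explicitly.

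\emph{Estimate.} For $j\in Q$,
\[
\|v_j\otimes w_j - a\otimes b\| \le \|(v_j-a)\otimes w_j\| + \|a\otimes(w_j-b)\| < 2\varepsilon .
\]
For $j\notin Q$ the crude bound $\|v_j\otimes w_j-a\otimes b\|\le 2$ suffices. Combining these,
\[
\|u-a\otimes b\| < 2\varepsilon + 2\delta < 3\varepsilon .
\]
Taking closures gives $T\subset B[a\otimes b,3\varepsilon]\subset B(a\otimes b,4\varepsilon)$. Since $\varepsilon>0$ was arbitrary, $a\otimes b$ is a semi denting point of $B_{X\widehat{\otimes}_\pi Y}$.

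The only step needing care is the density and closure passage together with the sign normalization. The rest is the $n=1$ case of Theorem \ref{projective semi -scs}, in which the convex-combination bookkeeping disappears.
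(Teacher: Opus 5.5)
Your proposal is correct and follows the paper's route: the paper proves this corollary by running the proof of Theorem \ref{projective semi -scs} with $n=1$, using the single slice $S(B_{X\widehat{\otimes}_\pi Y}, x^*\otimes y^*,\delta^2)$, the index split $Q$, and the bound $\sum_{j\notin Q}\lambda_j<\delta$, exactly as you do. Your differences are minor. You bound each $\|v_j\otimes w_j-a\otimes b\|$ directly instead of passing through $a\otimes\sum_{j\in Q}\lambda_j w_j$. You also make explicit the density/closure passage and the sign normalization that the paper leaves implicit; the normalization tacitly needs $\delta<1$, which is harmless since it suffices to treat small $\varepsilon$.
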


 \begin{question}
     Do the converses of Theorem \ref {projective semi -scs} and Corollary \ref {projective semi -denting} hold?
 \end{question}

\noindent\textbf{{Conflicts of Interest}} \ \ \ \ The authors declare that they have no conflict of interest.

\noindent\textbf{Funding} \ \ \ \ Not applicable.

   \begin{Acknowledgement}
This work was done when the first and second authors were visiting the National Institute of Science Education and Research (NISER), Bhubaneswar, India. They are grateful to Professor Anil Karn, Department of Mathematical Sciences, NISER, for his hospitality and support. 
The first author is grateful to Dr. Timothy Clarke, Chair, Department of Mathematics and Statistics, Loyola University, for his support and encouragement. She is also grateful to Professor Bahram Roughani, Associate Dean of the College of Arts and Sciences, Loyola University, for providing her with the Dean's supplemental grant during her travel in Summer 2025. 
The second and the third authors thank Professor C. Nahak, Department of Mathematics, IIT Kharagpur, for his support. 
The research of the third author is supported by the Institute Postdoctoral Fellowship, IIT Kharagpur.
The research of the fourth author is supported by the Institute Postdoctoral Fellowship, NISER Bhubaneswar.
   \end{Acknowledgement}

\enlargethispage{4\baselineskip}

\end{document}